\theoremstyle{plain}
\theoremstyle{plain}
\newtheorem{theorem}{Theorem}[section]
\newtheorem{proposition}{Proposition}[section]
\numberwithin{equation}{section}
\newtheorem{corollary}{Corollary}[section]
\newtheorem{lemma}{Lemma}[section]
\newtheorem*{proposition*}{Proposition} 
\title{The ALM-Framework in the Theory of Multivariate Operator Means}
\author{Dante Hoshina and Shuhei Wada}
\date{}
\begin{document}
\maketitle
\begin{abstract}
In this paper, we generalize the ALM-procedure introduced by Ando, Li, and Mathias for extending operator geometric means to multiple variables.  
We prove that the generalized procedure preserves all the properties required by the axioms of operator means, along with several additional desirable properties.  
Moreover, we examine the self-adjointness of the canonical geometric mean constructed through this procedure.
\end{abstract}

\section{Introduction}
The theory of means for two positive operators on a Hilbert space began with the study of the geometric mean, introduced by Pusz and Woronowicz~\cite{PW}, and was later axiomatized in the work of Kubo and Ando~\cite{KA}.
Their theory demonstrated that two-variable operator means can be identified with certain families of functions. 
Consequently, when restricted to the two-variable case, the theory of means becomes essentially a function-theoretic study. 
This enables the analysis of operator inequalities without being bothered by the complexities of non-commutativity.

As for means of three or more variables, it took time before active research started, except for simple examples such as the arithmetic mean. 
A pioneering work in the early study of multivariable means is that of Ando, Li, and Mathias~\cite{ALM}, who not only developed a three-variable geometric mean for matrices, but also suggested a general method for constructing an \( n \)-variable mean from \((n-1)\)-variable ones. 
Their theory has stimulated much subsequent work, leading to the development of other approaches to 
geometric means~(\cite{BMP},\cite{N},\cite{M} ) and to the investigation of their properties~\cite{HSW}.

Meanwhile, the construction method proposed by Ando, Li, and Mathias—the ALM-procedure—has 
come back into focus
through a conjecture by Petz and Temesi, which encouraged further studies by P\'alfia~\cite{P} and 
Uchiyama~\cite{U,U2}. 

\begin{proposition*}(\cite[Theorem 2.4]{U2})
Let $A,B,C$ be positive operators on a Hilbert space and let $\sigma$ be 
a symmetric operator mean in the sense of Kubo and Ando. 
Then  the sequences $A_n, B_n,C_n$ 
defined by $(A_0,B_0,C_0):=(A,B,C)$ and 
\[
(A_{n+1},B_{n+1},C_{n+1}):=(B_n \sigma C_n, C_n \sigma A_n, A_n \sigma B_n) 
\]
converge weakly  to the same limit $S$.  Moreover, 
$${{A_n+B_n+C_n}\over 3}\downarrow S.$$
\end{proposition*}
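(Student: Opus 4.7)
My overall strategy is to first establish the monotone convergence of the arithmetic mean, and then to deduce weak convergence of the three sequences by driving their pairwise differences to zero. The starting point is the scalar estimate $X\sigma Y\le(X+Y)/2$, valid for any symmetric Kubo--Ando mean: the representing operator monotone function $f$ is concave with $f(1)=1$, and the symmetry condition $xf(1/x)=f(x)$ forces $f'(1)=1/2$, so that $f(x)\le 1+(x-1)/2$. Summing this inequality over the three pairs $(B_n,C_n),(C_n,A_n),(A_n,B_n)$ yields $A_{n+1}+B_{n+1}+C_{n+1}\le A_n+B_n+C_n$, so $M_n:=(A_n+B_n+C_n)/3$ is a monotone decreasing sequence of positive operators. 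Vigier's theorem then gives SOT convergence $M_n\downarrow S$ for some positive $S$, which already delivers the second assertion of the proposition.

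Next I introduce the cyclic defects $D_n^{(1)}:=(B_n+C_n)/2-B_n\sigma C_n$, $D_n^{(2)}:=(C_n+A_n)/2-C_n\sigma A_n$, $D_n^{(3)}:=(A_n+B_n)/2-A_n\sigma B_n$, each a positive operator. The telescoping identity $M_n-M_{n+1}=(D_n^{(1)}+D_n^{(2)}+D_n^{(3)})/3$ then produces the strongly convergent series $\sum_n(D_n^{(1)}+D_n^{(2)}+D_n^{(3)})=3(M_0-S)$, so for every vector $\xi$ and each $i$, $\langle D_n^{(i)}\xi,\xi\rangle\to 0$. A short computation---inserting $(B_n+C_n)/2$ and $(C_n+A_n)/2$ into $A_{n+1}-B_{n+1}=B_n\sigma C_n-C_n\sigma A_n$---produces the key recursion
\begin{equation*}
A_{n+1}-B_{n+1}=-\tfrac12(A_n-B_n)+\bigl(D_n^{(2)}-D_n^{(1)}\bigr),
\end{equation*}
with cyclic analogues for the other two pairs. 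Taking the diagonal matrix element against a fixed $\xi$ yields a scalar recursion $a_{n+1}=-a_n/2+e_n$ with $e_n\to 0$, whose bounded solution necessarily decays to zero. Hence $\langle(A_n-B_n)\xi,\xi\rangle\to 0$ for every $\xi$, and since $A_n-B_n$ is self-adjoint, polarization lifts this to WOT convergence $A_n-B_n\to 0$, and similarly for the other two differences. Writing $A_n=M_n+\tfrac13\bigl((A_n-B_n)+(A_n-C_n)\bigr)$ and combining SOT convergence of $M_n$ with weak convergence of the differences, I conclude $A_n\to S$ weakly, and the same argument covers $B_n$ and $C_n$.

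The main obstacle is the middle step, where weak-operator summability of the defects $D_n^{(i)}$ must be turned into decay of the pairwise differences $A_n-B_n$. The structural feature that makes this possible is the contraction factor $-1/2$ in the difference recursion, which converts a summable error term into a solution tending to zero. Without this contraction, weak convergence of the defects would give essentially no control over the differences; with it, the problem reduces to an elementary scalar argument together with the polarization step that recovers WOT convergence from the vanishing of diagonal matrix elements.
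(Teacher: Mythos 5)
Your proof is correct, and it takes a genuinely different route from the one the paper uses for the corresponding (more general) result in Lemma 4.3. The paper fixes a vector $v$, notes that $t_n=(\langle A_nv\,|\,v\rangle,\langle B_nv\,|\,v\rangle,\langle C_nv\,|\,v\rangle)$ lives in a compact cube, extracts a convergent subsequence, and uses the quadratic-form transformer inequality (Proposition 2.2) together with \emph{strict concavity} (Proposition 2.1) to force every subsequential limit to have equal coordinates; the purely arithmetic case is handled separately by a Perron--Frobenius computation. You instead exploit the fact that for a symmetric mean the weight is exactly $1/2$, so that in the difference $A_{n+1}-B_{n+1}=B_n\sigma C_n-C_n\sigma A_n$ the $C_n$ terms cancel and one gets the decoupled contraction $A_{n+1}-B_{n+1}=-\tfrac12(A_n-B_n)+(D_n^{(2)}-D_n^{(1)})$, with the defects $D_n^{(i)}\ge 0$ summable in quadratic form by the telescoping identity $3(M_n-M_{n+1})=\sum_i D_n^{(i)}$. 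This buys you two things: the argument is elementary (no subsequence extraction, no appeal to strict concavity, so the arithmetic mean $\nabla_{1/2}$ and the genuinely concave case are treated uniformly), and it is quantitative in spirit, since the contraction factor $-1/2$ converts a null error sequence into a null solution. What it gives up is generality: for a non-symmetric triple $(\sigma_1,\sigma_2,\sigma_3)$ the three weights differ, the third variable no longer cancels from the pairwise differences, and the recursion couples all differences through a matrix; the paper's compactness-plus-strict-concavity argument (and the Perron vector $(p_1,p_2,p_3)$) is what survives in that setting. All the individual steps you use check out: $f'(1)=1/2$ and concavity give $X\sigma Y\le(X+Y)/2$, the telescoping identity is exact, termwise vanishing of $\langle D_n^{(i)}\xi,\xi\rangle$ follows from nonnegativity and convergence of the series, the scalar recursion $a_{n+1}=-a_n/2+e_n$ with $e_n\to0$ indeed forces $a_n\to0$, and polarization of the self-adjoint differences plus the identity $A_n=M_n+\tfrac13\bigl((A_n-B_n)+(A_n-C_n)\bigr)$ closes the argument.
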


\bigskip
That is, using the ALM-procedure, a new three-variable mean 
can be obtained from any two-variable symmetric operator mean.  
As a generalization of this fact, we show that a new three-variable mean $M_{\sigma_1,\sigma_2,\sigma_3}$ 
can be constructed from a certain triple of two-variable operator means $(\sigma_1,\sigma_2,\sigma_3)$.
We also obtain some properties of the three-variable mean $M_{\sigma_1,\sigma_2,\sigma_3}$.  
Finally, we show that from a set of \(n\)-variable operator means 
satisfying certain conditions, an \((n+1)\)-variable operator mean can be constructed using the ALM-procedure.

At the end of this section, we describe the structure of this paper.

In Section 2, we review the classical theory of two-variable operator means developed by Kubo and Ando, and we explain some properties of the means that come from this theory. 

In Section 3, we introduce a theory of multivariate operator means. 
As a natural extension of the two-variable theory of Kubo and Ando, we propose a set of axioms that define 
what a multivariate mean for three or more positive operators should be, even when the operators are not 
necessarily invertible, and present several results based on this axiomatic framework.

In Section 4, we prove one of our main theorems, which shows that the ALM-procedure works under certain conditions and gives a three-variable operator mean. The Perron–Frobenius theory is one of the key tools in the proof of this theorem. 
We also show that the canonical geometric mean obtained from the ALM-procedure is self-adjoint.

In Section 5, we show that the ALM-procedure gives multivariate operator means. The proof of this result is a formal extension of the main theorem in the previous section.

\section{Two-variable  operator means}
In this paper, we discuss  a method of constructing operator means 
 involving three or more variables.  Before this, we review the theory of 
 two-variable  operator means. Let \( B(\mathcal{H}) \) denote the set of all bounded linear operators on a Hilbert space \( \mathcal{H} \), and let \( B(\mathcal{H})_+ \) represent the set of all positive operators on \( \mathcal{H} \).  
When a positive operator \( A \) is invertible, we write \( A > 0 \), and define 
$
\mathbb{P}:= \{ A \in B(\mathcal{H}) \mid A > 0 \}.
$ 
For self-adjoint operators \( A \) and \( B \), we write \( A \ge B \) if \( A - B \in B(\mathcal{H})_+ \).
We say that a binary operation \(\sigma\) on 
 \( B(\mathcal{H})_+ \)
 is called an \emph{operator mean}
 if it satisfies the following four axioms \cite{KA}: 
\begin{itemize}
    \item \textbf{Monotonicity:} If \(A \le C\) and \(B \le D\), then \(A \sigma B \le C \sigma D\).
    \item \textbf{Transformer inequality:} For any positive operator \(T\), we have 
    \[
    T (A \sigma B) T \le (TAT) \sigma (TBT).
    \]
    \item \textbf{Downward continuity:} If \(A_n \downarrow A\) and \(B_n \downarrow B\), then 
    \[
    A_n \sigma B_n \downarrow A \sigma B.
    \]
    \item \textbf{Normalized condition:} \(I \sigma I = I\), where \(I\) is the identity operator.
\end{itemize}

Let $OM_+^1$ be the set of all normalized positive operator monotone functions on $(0,\infty)$.  
By the theory of two-variable  operator means, there exists an affine order isomorphism from the set of all operator means onto \( OM_+^1 \). More precisely, for each operator mean \(\sigma\), there exists a unique function \( f_\sigma \in OM_+^1 \) such that
\[
A \sigma B = A^{1/2} f_\sigma(A^{-1/2} B A^{-1/2}) A^{1/2}\quad (A>0,\ B\ge 0).
\]
Here, the function \( f_\sigma(t) (:= 1 \sigma t ) \) is called the \emph{representing function} of \(\sigma\). Therefore, the behavior of a two-variable  operator mean is completely determined by its representing function \( f_\sigma \).

Henceforth, we denote by \( \#_r \) and \( \nabla_r \) the operator means whose representing functions are the power function \( t \mapsto t^r \) and the linear function \( t \mapsto (1 - r) + r t \), respectively, where \( 0 \le r \le 1 \). An operator mean \(\sigma\) is said to be \emph{arithmetic} or \emph{geometric} if \(\sigma = \nabla_\alpha\) or \(\sigma = \#_\alpha\), respectively, for some \(\alpha \in [0,1]\).

To prepare for the following discussion, we introduce some notation and properties 
concerning two-variable  operator means.
First, we define the operator means denoted by $r$ and $l$ as follows:
$$A r B:=B,\quad A l B:= A.$$
An operator mean \(\sigma\) is said to be \emph{non-trivial} if \(\sigma \notin \{l, r\}\). 
It folows from the theory of operator monotone functions that 
for every non-trivial operator mean $\sigma$, 
the derivative  
${{d(1\sigma t)}\over {dt}} ~\big|_{t=1}$ lies in $(0,1)$. 
We refer to this constant as the \emph{weight} of  $\sigma$.

The following property of a non-arithmetic operator mean is called \emph{strict concavity}.
The property of strict concavity plays a crucial role in the proof of the main theorem in this paper.

\begin{proposition}\label{strict concavity1}
Let $\alpha,\beta \in [0,\infty)$,  
and let  
$\sigma$ be a non-trivial operator mean with weight $r\in (0,1)$, which is not 
arithmetic. If 
$$ \alpha\sigma \beta =(1-r)\alpha + r \beta ,$$
then $\alpha = \beta$. 
\end{proposition}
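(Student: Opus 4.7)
The plan is to transfer the hypothesis to a statement about the representing function $f = f_\sigma \in OM_+^1$ and then use the rigidity of operator monotone functions to rule out any solution besides $\alpha = \beta$.

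First, in the main case $\alpha > 0$, the scalar form of the Kubo--Ando formula $\alpha \sigma \beta = \alpha f(\beta/\alpha)$ (valid for $\beta \ge 0$ once we set $f(0) := f(0^+)$) reduces the equation to $f(t) = (1-r) + rt$ where $t := \beta/\alpha \in [0,\infty)$. Since $f$ is operator monotone on $(0,\infty)$ it is concave there, and from $f(1) = 1$ together with $f'(1) = r$ (the weight), the line $\ell(s) := (1-r) + rs$ is exactly the tangent to $f$ at $s=1$; concavity then gives $f(s) \le \ell(s)$ for all $s \ge 0$. Thus the hypothesis says that $f$ meets its own tangent line at a second point $s = t$.

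Next I would run a standard ``chord versus tangent'' argument to force $t = 1$. Concavity of $f$ on the closed interval with endpoints $1$ and $t$, combined with $f$ matching $\ell$ at both endpoints, gives $f(s) \ge \ell(s)$ throughout this interval; together with the tangent bound $f \le \ell$, we conclude $f \equiv \ell$ on that interval. Since every element of $OM_+^1$ is real-analytic on $(0,\infty)$ (via the Pick--Nevanlinna integral representation), the identity $f = \ell$ then extends to all of $(0,\infty)$, contradicting the assumption that $\sigma$ is non-arithmetic. Hence $t = 1$, i.e., $\alpha = \beta$.

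The degenerate case $\alpha = 0$ is handled separately. If $\beta = 0$ as well, the conclusion is immediate; if $\beta > 0$, downward continuity applied to $\varepsilon I \downarrow 0$ gives $0 \sigma \beta = \beta \lim_{s \to \infty} f(s)/s$, so the hypothesis forces $\lim_{s\to\infty} f(s)/s = r$. Since $f'$ is non-increasing with $f'(1) = r$ and $\lim_{s\to\infty} f(s)/s = \lim_{s\to\infty} f'(s)$, this gives $f' \equiv r$ on $[1,\infty)$, hence $f \equiv \ell$ on $[1,\infty)$, and analyticity propagates this globally---again a contradiction. Thus this case is vacuous, completing the proof.

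The main obstacle to guard against is the clean invocation of the rigidity of operator monotone functions: matching the tangent on a subinterval only yields local identity with $\ell$, and without real-analyticity (or a similar global-from-local principle) the non-arithmetic assumption could not be contradicted. Everything else is elementary convex geometry.
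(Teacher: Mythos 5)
Your proposal is correct and follows essentially the same route as the paper: reduce the hypothesis to the statement $f_\sigma(t)=(1-r)+rt$ for the representing function and conclude $t=1$ from the fact that a non-arithmetic $f\in OM_+^1$ can meet its tangent line at $1$ only at $1$. The only differences are cosmetic: the paper asserts that rigidity step without justification and disposes of the case $\alpha=0$ via the transposed function $f_\sigma^\circ(s)=sf_\sigma(1/s)$, whereas you supply the concavity-plus-analyticity argument explicitly and treat $\alpha=0$ through the asymptotic slope $\lim_{s\to\infty}f_\sigma(s)/s$; both variants are sound.
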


\begin{proof}
It suffices to assume $(\alpha,\beta)\not=(0,0)$.

Let us consider the case where $\alpha \not=0$.  
It follows from the assumption that 
$f_\sigma (\beta / \alpha )=(1-r) + r(\beta /\alpha )$. 
Since $\sigma$ is not arithmetic, we have $(\beta /\alpha ) =1$. 

We next show the case if $\beta\not=0$. 
Let us denote the representation function of the transpose of $\sigma$ by 
$f_\sigma^\circ$. Then we have    
$f_\sigma^\circ (\alpha/\beta )=(1-r)(\alpha/\beta ) + r$, which implies $(\alpha/\beta )=1$.  
\end{proof}
{\Remark 
A two-variable  operator mean is not arithmetic if and only if it is strictly concave.
}

\begin{proposition}\label{transfer2}
Let $\sigma$ be an operator mean and 
$A,B\ge 0$. Then 
$$\langle A\sigma B x~|~x\rangle \le 
\langle A x~|~x\rangle\sigma \langle B x~|~x\rangle 
$$
holds for all $x\in {\cal H}$. 
\end{proposition}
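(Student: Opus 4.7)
The plan is to apply the transformer inequality with the rank-one positive operator $T := |x\rangle\langle x|$, which, after a short reduction, will convert the operator inequality into the required scalar one. By scaling $x$ if necessary I may assume $\|x\| = 1$: both sides of the desired inequality are homogeneous of degree two in $x$ (using $(\lambda A)\sigma(\lambda B) = \lambda(A\sigma B)$ for $\lambda \ge 0$, which is valid for both the operator mean and the associated scalar mean), and the case $x = 0$ is trivial.

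With $P := |x\rangle\langle x|$ a rank-one projection, the transformer inequality yields $P(A\sigma B)P \le (PAP)\sigma(PBP)$. A direct computation gives $PAP = \langle Ax|x\rangle P$, and similarly $PBP = \langle Bx|x\rangle P$ and $P(A\sigma B)P = \langle (A\sigma B)x|x\rangle P$. Thus both sides of the transformer inequality are scalar multiples of $P$, and the proposition will follow once I establish the auxiliary identity
\[
(\alpha P)\sigma(\beta P) = (\alpha\sigma\beta)P \qquad (\alpha,\beta \ge 0);
\]
for then the inequality reduces to $\langle (A\sigma B)x|x\rangle P \le (\langle Ax|x\rangle \sigma \langle Bx|x\rangle)P$, and since $P \neq 0$ this in turn yields the claimed scalar inequality.

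This auxiliary identity is the only nonroutine step, and is where I expect the main obstacle to lie, because $P$ is not invertible and so the Kubo--Ando representing formula cannot be applied directly. To circumvent this I would regularize by setting $A_\epsilon := \alpha P + \epsilon(I-P)$ and $B_\epsilon := \beta P + \epsilon(I-P)$ for $\epsilon > 0$; these are commuting invertible positive operators on which the functional calculus for $f_\sigma$ is transparent and yields $A_\epsilon \sigma B_\epsilon = (\alpha\sigma\beta)P + \epsilon(I-P)$. Letting $\epsilon \downarrow 0$ and invoking downward continuity produces the required identity, completing the proof.
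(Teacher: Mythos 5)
Your argument is correct and follows essentially the same route as the paper: reduce to $\|x\|=1$, apply the transformer inequality to the rank-one projection $P$, and evaluate $(\alpha P)\sigma(\beta P)$ by an $\epsilon$-regularization together with downward continuity. One small repair: your regularizer $A_\epsilon=\alpha P+\epsilon(I-P)$ fails to be invertible when $\alpha=0$, so you should instead take $A_\epsilon=\alpha P+\epsilon I$ (as the paper does), which is always invertible, still commutes with $B_\epsilon$, and gives $A_\epsilon\sigma B_\epsilon=\bigl((\alpha+\epsilon)\sigma(\beta+\epsilon)\bigr)P+\epsilon(I-P)\downarrow(\alpha\sigma\beta)P$.
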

\begin{proof}
It suffices to assume $\|x\|=1$. 
Let $P$ be a rank-$1$ projection defined by $Py:=\langle y ~|~x\rangle x$ for all $y\in {\cal H}$.  
Then, from the transformer inequality, we have  
\begin{align*}
P (A\sigma B) P
&=P \langle A\sigma B x~|~x\rangle P \\
&\le (PAP)\sigma (PBP)\\
&=
  ( \langle A x~|~x\rangle P )
\sigma 
( \langle B x~|~x\rangle P )\\
&=\lim_{e\rightarrow 0}
  ( \langle A x~|~x\rangle P +\epsilon I)
 f_\sigma \left(
{ {  \langle B x~|~x\rangle P +\epsilon I}
 \over {  \langle A x~|~x\rangle P +\epsilon I}}
\right)  \\
&=
\lim_{e\rightarrow 0}\left(
  ( \langle A x~|~x\rangle +\epsilon )
  f_\sigma \left( 
{ {  \langle B x~|~x\rangle  +\epsilon }
 \over {  \langle A x~|~x\rangle  +\epsilon }}
\right) P+ \epsilon (I-P) 
\right)
\\
&=
 \langle A x~|~x\rangle 
\sigma 
 \langle B x~|~x\rangle P.
\end{align*}
\end{proof}

\section{Multivariate operator means}\label{multivariate operator mean}
\subsection{Natural extension}
For $n\ge 3$, a map $M: {(B({\cal H})_+)}^n \rightarrow {B({\cal H})_+}$ is said to be an 
\emph{$n$-variable operator mean} if
$M$ satisfies the following conditions:
\begin{description}
  \item[(I) Monotonicity:]  
  If \( A_j, B_j \in B({\cal H})_+ \) and \( A_j \le B_j \) for all \( j = 1, \dots, n \), then  
  \[
  M(A_1, \dots, A_n) \le M(B_1, \dots, B_n).
  \]

  \item[(II) Transformer inequality:]  
  For any \( A_1, \dots, A_n \in B({\cal H})_+ \), and any $S\in B({\cal H})_+ $, 
  \[
  S M(A_1, \dots, A_n) S \le M(S A_1 S, \dots, S A_n S).
  \]

  \item[(III) Downward continuity:]  
  Suppose \( A_{jk} \in B({\cal H})_+ \) for \( j = 1, \dots, n \) and \( k \in \mathbb{N} \).  
  If for each \( j \), either \( A_{jk} \downarrow A_j \) 
  as \( k \to \infty \), then  
  \[
  M(A_{1k}, \dots, A_{nk}) \downarrow M(A_1, \dots, A_n). 
  \]

  \item[(IV) Normalized condition:]  
  \[
  M(I, \dots, I) = I,
  \]
  where \( I \) denotes the identity operator on \( \mathcal{H} \).
\end{description}

(II) is a natural Extension of the corresponding condition in the definition of two-variable  operator means.  
From this condition, the following one called \emph{congruence invariance} immediately follows:

\begin{description}
  \item[(II)'] \textbf{Congruence invariance:}  
  For any \( A_1, \dots, A_n \in B({\cal H})_+ \) and any \( S \in {\Bbb P} \),
  \[
    S M(A_1, \dots, A_n) S = M(S A_1 S, \dots, S A_n S).
  \]
\end{description}

In the theory of two-variable operator means  \cite{KA}, binary operations for a pair of positive operators are considered.
However, in the multivariate case, it is often the case that the operators involved are assumed to be invertible.
In \cite{HSW}, a map from \( \mathbb{P}^n \) to \( \mathbb{P} \) is said to be a multivariate operator 
mean if it satisfies conditions (I), (II)', (III), and (IV), as well as upward continuity.
Our multivariate operator mean \( M \) satisfies \( M(A_1, \dots, A_n) > 0 \) for any invertible positive operators \( A_1, \dots, A_n \).
However, it is not clear whether the restriction of $M$ to ${\Bbb P}^n$ 
is a multivariate operator mean in the sense of \cite{HSW}.

\bigskip

The following proposition is a consequence of the transformer inequality.
Hereafter, $M$ denotes a three-variable  operator mean.
\begin{proposition} \label{transformer2}
$$\langle M(A,B,C) v~|~v\rangle 
\le 
 M(\langle  A  v ~|~v\rangle , \langle  B  v ~|~v\rangle , \langle  C  v ~|~v\rangle  )$$
holds for all $A,B,C\in B({\cal H})_+$ and for all $v\in {\cal H}$.\end{proposition}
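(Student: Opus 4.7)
The plan is to mirror, almost verbatim, the proof of the two-variable analogue Proposition \ref{transfer2}. First I would reduce to the case $\|v\|=1$: the $v=0$ case is trivial, and otherwise both sides scale by $\|v\|^2$ under $v \mapsto v/\|v\|$---on the right because of the homogeneity $M(t^2\alpha, t^2\beta, t^2\gamma) = t^2 M(\alpha,\beta,\gamma)$ that follows from congruence invariance (II)' with $S = tI$.

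Assuming $\|v\|=1$, let $P$ be the rank-one projection $Py := \langle y \mid v\rangle v$. The central move is the transformer inequality (II) applied with $S = P$:
\[
P\,M(A,B,C)\,P \le M(PAP,\,PBP,\,PCP) = M(\langle Av\mid v\rangle P,\;\langle Bv\mid v\rangle P,\;\langle Cv\mid v\rangle P),
\]
using $PAP = \langle Av\mid v\rangle P$ for the rank-one projection $P$. Then monotonicity (I), together with $\alpha P \le \alpha I$ for each $\alpha \ge 0$, upgrades this to
\[
P\,M(A,B,C)\,P \le M(\langle Av\mid v\rangle I,\;\langle Bv\mid v\rangle I,\;\langle Cv\mid v\rangle I).
\]
Pairing with $v$ and using $Pv=v$ then yields $\langle M(A,B,C)v\mid v\rangle \le M(\langle Av\mid v\rangle,\langle Bv\mid v\rangle,\langle Cv\mid v\rangle)$, once we identify $M$ evaluated on scalar operators $\alpha I$ with the scalar value of the corresponding mean.

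The main point where I expect friction is the final identification step. The two-variable proof exploits the representing function $f_\sigma$ and downward continuity to compute $M(\alpha P,\beta P) = (\alpha\sigma\beta)P + \epsilon$-terms \emph{exactly}, whereas the multivariate setting has no representing function available. This turns out not to be a genuine obstacle for the present inequality, however, because we only need an upper bound on $\langle M(A,B,C)v\mid v\rangle$ rather than an exact value, and the coarser monotonicity step $M(\alpha P, \beta P, \gamma P) \le M(\alpha I, \beta I, \gamma I)$ is sufficient. The implicit convention to flag is the identification of $M(\alpha I,\beta I,\gamma I)$ with a scalar, which parallels the two-variable convention $\alpha I \sigma \beta I = (\alpha\sigma\beta) I$ and is consistent with the notation used in the proposition's statement.
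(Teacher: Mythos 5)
Your argument is correct, and its core is the same as the paper's: reduce to $\|v\|=1$, introduce the rank-one projection $P$, and apply the transformer inequality with $S=P$ to get $P\,M(A,B,C)\,P \le M(\langle Av\mid v\rangle P, \langle Bv\mid v\rangle P, \langle Cv\mid v\rangle P)$. Where you diverge is the finishing step. The paper first proves an auxiliary lemma (for $A,B,C$ commuting with $P$ one has $M(A,B,C)P = M(AP,BP,CP)P$, via a two-sided squeeze and a commutation argument) and uses it to convert $P\,M(\alpha P,\beta P,\gamma P)\,P$ into the \emph{exact} value $M(\alpha,\beta,\gamma)P$. You instead observe that only an upper bound is needed and replace the lemma by the one-line monotonicity step $M(\alpha P,\beta P,\gamma P)\le M(\alpha I,\beta I,\gamma I)$ from $\alpha P\le\alpha I$, then pair with $v$. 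Your route is more elementary and makes the auxiliary lemma unnecessary for this proposition; the paper's route buys the stronger equality statement, which is of independent interest. Both proofs rest equally on the convention that $M(\alpha I,\beta I,\gamma I)$ is identified with the scalar $M(\alpha,\beta,\gamma)$ (times $I$), which the statement's notation presupposes and which you rightly flag; this is not a gap specific to your argument. Your reduction to $\|v\|=1$ via congruence invariance with $S=tI$ is also fine.
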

\begin{lemma}
Let $P$ be an orthogonal projection on ${\cal H}$ and let 
$A,B,C\in  B({\cal H})_+$ that commute with $P$. 
Then 
$$M(A,B,C)P=M(AP,BP,CP)P.$$
\end{lemma}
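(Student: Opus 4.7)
The plan is to approximate $P$ by the invertible positive operators $S_\epsilon := P + \epsilon(I-P)$ for $\epsilon>0$, apply congruence invariance (II)', and let $\epsilon \downarrow 0$. Since $A, B, C$ commute with $P$, they commute with $S_\epsilon$, giving $S_\epsilon A S_\epsilon = A S_\epsilon^2 = AP + \epsilon^2 A(I-P)$, and similarly for $B, C$. Invoking (II)' yields the key identity
\[
S_\epsilon M(A,B,C) S_\epsilon = M\bigl(AP + \epsilon^2 A(I-P),\; BP + \epsilon^2 B(I-P),\; CP + \epsilon^2 C(I-P)\bigr),
\]
which I will squeeze from both sides by varying $\epsilon$.

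Write $M := M(A,B,C)$. Since $A(I-P) = (I-P)A(I-P) \ge 0$ (and likewise for $B, C$), the arguments on the right decrease to $AP, BP, CP$ as $\epsilon \downarrow 0$, so downward continuity (III) gives $M(\ldots) \downarrow M(AP, BP, CP)$ along any sequence $\epsilon_k \downarrow 0$. The left-hand side expands as
\[
S_\epsilon M S_\epsilon = PMP + \epsilon\bigl(PM(I-P) + (I-P)MP\bigr) + \epsilon^2 (I-P)M(I-P),
\]
which is norm-continuous in $\epsilon$ and tends to $PMP$. Hence $PMP = M(AP,BP,CP)$, and since $P^2 = P$ it follows that $M(AP,BP,CP)\,P = PMP$.

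It remains to show $MP = PMP$, i.e., that $M$ commutes with $P$. Observe that $AP + \epsilon^2 A(I-P)$ is monotonically increasing in $\epsilon > 0$, so by monotonicity (I), $S_\epsilon M S_\epsilon$ is monotonically increasing in $\epsilon$, and therefore $S_\epsilon M S_\epsilon \ge \lim_{\delta \downarrow 0} S_\delta M S_\delta = PMP$ for every $\epsilon > 0$. Subtracting $PMP$, dividing by $\epsilon$, and letting $\epsilon \downarrow 0$ in the expansion above yields
\[
PM(I-P) + (I-P)MP \ge 0.
\]
Relative to the decomposition $\mathcal{H} = P\mathcal{H} \oplus (I-P)\mathcal{H}$, this self-adjoint operator has zero diagonal blocks and off-diagonal block $Y := PM(I-P)$; positivity of an operator of the form $\bigl(\begin{smallmatrix} 0 & Y \\ Y^* & 0 \end{smallmatrix}\bigr)$ forces $Y = 0$. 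Therefore $M$ commutes with $P$, and $MP = PMP = M(AP,BP,CP) = M(AP,BP,CP)\,P$.

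The main conceptual obstacle is this last step. In the two-variable Kubo–Ando setting, "$A\sigma B$ commutes with $P$" is immediate from the functional-calculus representation of $\sigma$, but in the axiomatic multivariate setting one only has access to it indirectly; the trick is to notice that monotonicity of $M$ forces $S_\epsilon M S_\epsilon$ to be monotone in $\epsilon$, so its first-order coefficient in $\epsilon$ is itself a positive operator — and this coefficient happens to be block-off-diagonal, hence vanishes.
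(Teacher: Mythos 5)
Your proof is correct, but it takes a genuinely different route from the paper's. The paper works directly with the non-invertible projection $P$: since axiom (II) is stated for arbitrary $S\in B(\mathcal{H})_+$, the chain $PMP\le M(PAP,PBP,PCP)=M(AP,BP,CP)\le M$ follows from (II) and (I) alone, and commutation of $M$ with $P$ is then extracted from $M-PMP\ge 0$ together with $P(M-PMP)P=0$ via the identity $\bigl|(M-PMP)^{1/2}P\bigr|^2=0$; the same trick applied to $M(AP,BP,CP)$ and a final sandwich multiplied by $P$ finish the argument, with no limits and no use of axiom (III). You instead regularize $P$ to the invertible $S_\epsilon$, which lets you invoke congruence invariance as an exact equality, and you pay for this with downward continuity (III) to pass to the limit; commutation of $M$ with $P$ is then obtained from the first-order term of $S_\epsilon M S_\epsilon - PMP\ge 0$ and the standard fact that a positive operator with vanishing diagonal blocks is zero. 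Both arguments are sound: yours uses one more axiom but delivers the slightly sharper identity $M(AP,BP,CP)=PM(A,B,C)P$ (so $M(AP,BP,CP)$ is supported on $P\mathcal{H}$), which the paper's proof does not directly yield, while the paper's is more economical and stays entirely within (I) and (II).
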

\begin{proof}
We first show that \( M(A,B,C) \) commutes with \( P \).  
From the monotonicity and transformer inequality, we have  
$$PM(A,B,C)P \le M(PAP, PBP, PCP) = M(AP, BP, CP) \le M(A,B,C),
$$
which implies that  
\[
M(A,B,C) - PM(A,B,C)P \ge 0,\]
and 
\[
\left| \left(M(A,B,C) - PM(A,B,C)P\right)^{1/2} P \right|^2 = 0.
\]
Hence, we obtain \( M(A,B,C)P = PM(A,B,C)P \).

On the other hand, the obvious inequality  
\[
M(AP, BP, CP) \ge PM(AP, BP, CP)P
\]
implies that
\[
\left| \left(M(AP, BP, CP) - PM(AP, BP, CP)P\right)^{1/2} P \right|^2 = 0,
\]
and therefore \( M(AP, BP, CP)P = PM(AP, BP, CP)P \).

Thus, both \( M(A,B,C) \) and \( M(AP, BP, CP) \) commute with \( P \).

Multiplying both sides of the inequality
\[
M(A,B,C)P \le M(PAP, PBP, PCP) \le M(A,B,C)
\]
by \( P \) from both sides, we obtain
\[
M(A,B,C)P \le M(PAP, PBP, PCP)P \le M(A,B,C)P.  
\]
Therefore, the equality follows. 
\end{proof}
\begin{proof}[Proof of Proposition \ref{transformer2}]
It is sufficient to  assume $\|v\|=1$. Let $P$ be a rank-$1$ projection defined by 
$Px:=\langle x~|~v\rangle v$ for all $x\in {\cal H}$. 
Since $PAP=\langle Av~|~v\rangle P$, we have 
\begin{align*}
PM(A,B,C)P
&=P \langle M(A,B,C) v~|~v\rangle P \\
&\le 
M(PAP,PBP,PCP) \\
&=
M(\langle Av~|~v\rangle P ,\langle Bv~|~v\rangle P ,\langle Cv~|~v\rangle P). 
\end{align*}
Multiplying both sides by $P$, 
\begin{align*}
\langle M(A,B,C) v~|~v\rangle P 
&\le 
PM(\langle Av~|~v\rangle P ,\langle Bv~|~v\rangle P ,\langle Cv~|~v\rangle P)P \\
&=
M(\langle Av~|~v\rangle   ,\langle Bv~|~v\rangle  ,\langle Cv~|~v\rangle  )P.\\
\end{align*}
The last equality follows from the lemma above.

\end{proof}

\subsection{Adjoint of an operator mean}\label{adjoint of mean}
In this subsection, we discuss an adjoint of a multivariate operator mean.  
Motivated by the definition of the adjoint for two-variable  operator means,  
the following definition is natural.  

Given two multivariate operator means \( M_1 \) and \( M_2 \),  
we say that \( M_2 \) is an \emph{adjoint} of \( M_1 \) if, for any invertible positive operators \( A_1, \dots, A_n \),  
\begin{equation}\label{adjoint}
M_2(A_1, \dots, A_n) =  M_1(A_1^{-1}, \dots, A_n^{-1})^{-1}.
\end{equation}

If $M_2$ and $M_2'$ satisfy (\ref{adjoint}), then by the downward continuity,
\begin{align*}
M_2((A_k))v &= \lim_{\epsilon \to 0} M_2((A_k + \epsilon I))v \\
           &= \lim_{\epsilon \to 0} M_1((A_k + \epsilon I)^{-1})^{-1} v\\
           &= \lim_{\epsilon \to 0} M_2'((A_k + \epsilon I)) v= M_2'((A_k))v,
\end{align*}
hold for all $A_1, \ldots, A_n \in B(\mathcal{H})_+$ and $v\in {\cal H}$.  
This implies that $M_2 = M_2'$.
Therefore, if the adjoint of a multivariate operator mean \( M \) exists, it is unique.  
In what follows, we denote the adjoint of \( M \) by \( M^* \).


\subsection{Weights }
Let \( M \) be an \( n \)-variable operator mean.  
Then the map 
\[
(A, B) \mapsto M(A, B, \ldots, B)
\]
defines a binary operation on \( B(\mathcal{H})_+ \) that satisfies the four axioms of a two-variable  operator mean described above.  
In particular, the function
\[
f_M(t):= M(t, 1, \ldots, 1)
\]
is operator monotone.  

If \( M \) is dominated by an arithmetic mean, that is, there exists a probability vector \( (r_k)_{k=1}^n \subset [0,1]^n \) such that
\begin{equation}\label{dominated}
M(A_1, \ldots, A_n) \le \sum_{k=1}^n r_k A_k
\end{equation}
for all \( A_k \ge 0 \), then \( f_M(t) \le r_1 t + (1 - r_1) \), which implies \( f_M'(1) = r_1 \).  

Similarly, the following proposition holds:

\begin{proposition}\label{derivative}
Let \( M \) be an \( n \)-variable operator mean and let \( (r_k)_{k=1}^n \) be a probability vector.  
If the inequality \eqref{dominated} holds, 
 then for each \( j = 1, \ldots, n \),
\[
\left. \frac{d}{dt} M\big(1, \ldots, 1, \underset{j\text{th}}{t}, 1, \ldots, 1\big) \right|_{t=1} = r_j.
\]
\end{proposition}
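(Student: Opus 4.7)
The plan is to reduce the derivative computation at the $j$th slot to the two-variable Kubo--Ando setting, mimicking the argument the authors sketched for $j=1$ via $f_M$. For each fixed $j\in\{1,\ldots,n\}$, I would introduce the auxiliary binary operation
\[
\tau_j(A,B) \;:=\; M\bigl(B,\ldots,B,\underset{j\text{th}}{A},B,\ldots,B\bigr),\qquad A,B\in B(\mathcal{H})_+.
\]
Each Kubo--Ando axiom for $\tau_j$ follows directly from the corresponding axiom for $M$: monotonicity and downward continuity are immediate from (I) and (III), the normalization $\tau_j(I,I)=M(I,\ldots,I)=I$ is (IV), and the transformer inequality for $\tau_j$ is just (II) applied to the tuple in which $A$ sits in the $j$th slot and $B$ fills the remaining slots. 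Hence $\tau_j$ is a two-variable operator mean, and its representing function
\[
f_{\tau_j}(t)\;=\;\tau_j(t,1)\;=\;M\bigl(1,\ldots,1,\underset{j\text{th}}{t},1,\ldots,1\bigr)
\]
lies in $OM_+^1$; in particular it is real-analytic on $(0,\infty)$ and satisfies $f_{\tau_j}(1)=1$.

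Next, I would substitute $A_j=tI$ and $A_k=I$ for $k\ne j$ into the hypothesized inequality \eqref{dominated}, which yields
\[
f_{\tau_j}(t)\,I \;\le\; r_j\,tI+(1-r_j)\,I,
\]
so that $f_{\tau_j}(t)-1 \le r_j(t-1)$ for every $t>0$, with equality at $t=1$. Dividing by $t-1$ and letting $t\to 1^+$ gives $f_{\tau_j}'(1)\le r_j$, while letting $t\to 1^-$ flips the inequality and gives $f_{\tau_j}'(1)\ge r_j$; differentiability of $f_{\tau_j}$ at $1$ follows from analyticity of elements of $OM_+^1$. Combining the two bounds produces the desired equality $f_{\tau_j}'(1)=r_j$.

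The only step that needs genuine verification is that $\tau_j$ really satisfies the transformer inequality, but this is a direct substitution into axiom (II) for $M$; once that is recorded, the proposition collapses to the one-variable tangent-line calculation above, and no further obstacle arises.
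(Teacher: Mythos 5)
Your proof is correct and follows essentially the same route the paper takes: the paper sketches exactly this argument for $j=1$ (noting that $(A,B)\mapsto M(A,B,\ldots,B)$ satisfies the Kubo--Ando axioms, so $f_M\in OM_+^1$, and then reading off $f_M'(1)=r_1$ from the tangent-line bound $f_M(t)\le r_1t+(1-r_1)$ with $f_M(1)=1$) and states the general case as "similarly." Your write-up simply makes the permuted binary operation $\tau_j$ and the two one-sided difference-quotient limits explicit, which is exactly the intended verification.
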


\bigskip

To conclude this section, we introduce terminology for \(n\)-variable operator means, following the convention used in the two-variable  case.

Let \(M\) be an \(n\)-variable operator mean. If there exists \(i\) with \(1 \le i \le n\) such that  
\[
M(A_1, \ldots, A_n) = A_i
\]  
holds for all positive operators \(A_1, \ldots, A_n\), then \(M\) is said to be \emph{trivial}.

Moreover, if there exists a probability vector \((r_k)_{k=1}^n\) such that  
\[
M(A_1, \ldots, A_n) = \sum_{k=1}^n r_k A_k
\]  
for all positive operators \(A_1, \ldots, A_n\), then \(M\) is said to be \emph{arithmetic}.

\section{Extension of the ALM-type mean}
\subsection{ALM-procedure}
In the following, we shall discuss a procedure for constructing an 
$n$-variable operator mean from an $(n-1)$-variable one.

The geometric mean for three or more matrices has been discussed in
the literature.  
First, we introduce the method for constructing the three-variable geometric mean proposed by 
Ando, Li and Mathias in \cite{ALM}. For positive definite matrices $A,B,C$, 
we define the sequence $(A_0,B_0,C_0):=(A,B,C)$ and 
\begin{equation}\label{ALM_procedure}
(A_{n+1},B_{n+1},C_{n+1}):=(B_n \# C_n, C_n \# A_n, A_n \# B_n). 
\end{equation}
These sequences $A_n, B_n,C_n$ have the same limit $G(A,B,C)$. 
Moreover, for $n\ge 4$, the n-variable operator mean $G$ 
can also be defined inductively in the same manner. 
The map $(A_1,\ldots, A_n)\to G(A_1,\ldots, A_n)$
 satisfies the conditions described in Section \ref{multivariate operator mean}, 
as well as additional conditions appropriate for a geometric mean \cite{ALM}.

\subsection{Three-variable Extension of an operator mean}

Concerning (\ref{ALM_procedure}), 
Petz and Temesi  \cite{PT} considered the convergence of the following sequence, 
where the geometric mean is replaced by a symmetric operator mean $\sigma$
: 
\begin{equation}\label{extended ALM_procedure}
(A_{n+1},B_{n+1},C_{n+1}):=(B_n \sigma C_n, C_n \sigma A_n, A_n \sigma B_n). 
\end{equation}
\begin{proposition}(\cite[Theorem~1]{PT})
Let $A,B,C$ be positive definite matrices of the same size. If $A\le B\le C$,
then the sequences $A_n, B_n,C_n$ converge to the same limit.
\end{proposition}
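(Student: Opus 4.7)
The plan is to exploit the initial total order $A \le B \le C$ to reduce the three-variable iteration to a closed system in three monotonically ordered operators. I would first check by induction that at every step the triple $\{A_n, B_n, C_n\}$ remains totally ordered in the operator sense; writing its members in increasing order as $L_n \le M_n \le U_n$, with $(L_0, M_0, U_0) = (A, B, C)$, the symmetry $X\sigma Y = Y\sigma X$ combined with the monotonicity of $\sigma$ yields the closed recursion
\[
L_n = L_{n-1}\sigma M_{n-1}, \qquad M_n = L_{n-1}\sigma U_{n-1}, \qquad U_n = M_{n-1}\sigma U_{n-1},
\]
with $L_n \le M_n \le U_n$ holding by monotonicity. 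The permutation relating $(A_n, B_n, C_n)$ to $(L_n, M_n, U_n)$ alternates with the parity of $n$, but the recursion is the same in both cases.

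Next, since $L_{n-1} = L_{n-1}\sigma L_{n-1} \le L_{n-1}\sigma M_{n-1} = L_n$, the sequence $(L_n)$ is increasing, and symmetrically $(U_n)$ is decreasing; both are trapped between $A$ and $C$. In finite dimensions this forces norm convergence to positive definite limits $L$ and $U$ with $A \le L \le U \le C$. Continuity of $\sigma$ on pairs of positive definite matrices (a consequence of the Kubo--Ando representation) then gives $M_n \to M := L\sigma U$, and passing to the limit in the recursion yields the fixed-point identities $L = L\sigma M$ and $U = M\sigma U$.

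Finally, I would use the rigidity of non-trivial operator means to conclude $L = M = U$. The identity $L = L\sigma M$, with $L > 0$, rewrites via $L\sigma M = L^{1/2} f_\sigma(L^{-1/2}M L^{-1/2}) L^{1/2}$ as $f_\sigma(X) = I$, where $X := L^{-1/2}ML^{-1/2} \ge I$. A symmetric operator mean has weight $1/2$ and is therefore non-trivial, so $f_\sigma$ is a non-constant operator monotone function with $f_\sigma(1) = 1$; such a function is strictly increasing on $(0,\infty)$, which forces $X = I$, i.e., $L = M$. The same argument applied to $U = M\sigma U$ yields $M = U$. Since $L_n \le A_n, B_n, C_n \le U_n$ by construction, a squeeze argument produces the common limit $S := L$.

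The main obstacle I anticipate is the initial step: one must carefully verify that the iteration genuinely preserves the total order (something that uses the hypothesis $A \le B \le C$ in an essential way) and that the recursion on $(L_n, M_n, U_n)$ closes consistently despite the parity-dependent relabeling of $A_n, B_n, C_n$. Once this bookkeeping is under control, the convergence and uniqueness-of-limit arguments are a fairly routine combination of monotone convergence in finite dimensions with the injectivity of non-constant operator monotone functions.
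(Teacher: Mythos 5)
Your argument is correct, and it is a genuinely different route from the one taken in the paper. The paper never proves this proposition directly (it is quoted from Petz--Temesi); the closest it comes is the corollary to Theorem~\ref{ordered} in Section~5, which recovers a strengthened version. Both you and that corollary begin from the same combinatorial observation: the hypothesis $A\le B\le C$, together with the symmetry and monotonicity of $\sigma$, forces the triple to remain totally ordered with the order reversing at each step, so that the extreme elements form monotone operator sequences. The difference lies in how the limits are identified. The paper's corollary presupposes the weak convergence of $A_n,B_n,C_n$ to a common limit (Theorem~\ref{main theorem}, proved via the Perron--Frobenius argument and the scalar strict-concavity Proposition~\ref{strict concavity1} applied to $\langle A_nv\,|\,v\rangle$), and uses the alternating order only to upgrade weak to strong convergence. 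You bypass that machinery entirely: the closed recursion $L_n=L_{n-1}\sigma M_{n-1}$, $M_n=L_{n-1}\sigma U_{n-1}$, $U_n=M_{n-1}\sigma U_{n-1}$ gives $L_n\uparrow L$ and $U_n\downarrow U$, and the fixed-point identities $L=L\sigma M$, $U=M\sigma U$ are resolved by an \emph{operator-level} rigidity argument --- $f_\sigma(X)=I$ with $X\ge I$ and $f_\sigma$ a non-constant (hence strictly increasing) operator monotone function forces $\mathrm{sp}(X)=\{1\}$, i.e.\ $X=I$. This is the operator analogue of Proposition~\ref{strict concavity1}, and it is legitimate here precisely because the order structure closes the system at the operator level. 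What your approach buys is a self-contained, elementary proof with norm convergence in finite dimensions (essentially the original Petz--Temesi strategy); what it costs is generality --- it uses both the symmetry of $\sigma$ and the total order of the initial data in an essential way, so it does not extend to the unordered case or to asymmetric triples $(\sigma_1,\sigma_2,\sigma_3)$, which is exactly the gap the paper's Perron--Frobenius/strict-concavity framework is built to close.
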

The condition $A\le B\le C$  in the preceding proposition is conjectured 
to be unnecessary. An affirmative answer to this conjecture was provided by P\'alfia \cite{P},  and 
it was generalized by Uchiyama \cite{U2} to a proposition for positive operators on an 
infinite-dimensional Hilbert space.

\begin{proposition}(\cite[Theorem 2.4]{U2})
Let $A,B,C$ be in $B({\cal H})_+$ and let $\sigma$ be 
a symmetric operator mean. Then  the sequences $A_n, B_n,C_n$ 
defined recursively as in (\ref{extended ALM_procedure}) 
weakly converge to the same limit $S$.  Moreover, 
$${{A_n+B_n+C_n}\over 3}\downarrow S.$$
\end{proposition}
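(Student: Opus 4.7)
The plan is to prove the two claims in opposite order from how they are stated: first establish the monotone SOT-convergence of the arithmetic average $M_n := (A_n+B_n+C_n)/3$, and then use this to extract weak convergence of the individual sequences via strict concavity.

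For the monotonicity, I would use that a symmetric operator mean has weight $1/2$ and hence $X\sigma Y \le (X+Y)/2$ for all $X,Y\ge 0$. Summing the three recursions gives
\[
A_{n+1}+B_{n+1}+C_{n+1} = B_n\sigma C_n + C_n\sigma A_n + A_n\sigma B_n \le A_n+B_n+C_n,
\]
so $M_n$ is decreasing. By Vigier's theorem, $M_n \downarrow S$ in SOT for some $S \in B(\mathcal{H})_+$, which is the second assertion of the proposition. Moreover, rewriting $3(M_n-M_{n+1})$ as a sum of three non-negative arithmetic-$\sigma$ gaps $\tfrac{X+Y}{2}-X\sigma Y$ over the three pairs $(B_n,C_n),(C_n,A_n),(A_n,B_n)$ and using that this sum tends to $0$ in SOT, each gap tends to $0$ in SOT individually.

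For the weak convergence, fix a unit vector $v$ and let $\alpha_n, \beta_n, \gamma_n$ denote the scalars $\langle A_nv\mid v\rangle$, $\langle B_nv\mid v\rangle$, $\langle C_nv\mid v\rangle$, which are bounded by $3\|M_0\|$. By Proposition \ref{transfer2}, $\langle B_n\sigma C_n\,v\mid v\rangle \le \beta_n\sigma\gamma_n \le (\beta_n+\gamma_n)/2$, so the operator gap vanishing from the previous paragraph forces $(\beta_n+\gamma_n)/2 - \beta_n\sigma\gamma_n \to 0$, and likewise for the two cyclic pairs. From here the argument splits: if $\sigma = \nabla_{1/2}$ is arithmetic, the recursion yields $A_n - B_n = (-1/2)^n(A_0 - B_0)$ directly; otherwise $\sigma$ is non-arithmetic, and a subsequential compactness argument on the bounded scalars, combined with joint continuity of the scalar mean on $[0,\infty)^2$, shows that any limit point $(\beta,\gamma)$ of $(\beta_n,\gamma_n)$ satisfies $\beta\sigma\gamma = (\beta+\gamma)/2$, forcing $\beta = \gamma$ by Proposition \ref{strict concavity1}. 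Thus $\beta_n - \gamma_n \to 0$, and together with $(\alpha_n+\beta_n+\gamma_n)/3 \to \langle Sv\mid v\rangle$ the three scalars share this common limit. By polarization and the uniform bound $A_n \le 3M_0$, weak convergence $A_n, B_n, C_n \to S$ follows.

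The main obstacle is the passage from the scalar gap vanishing to $\beta_n - \gamma_n \to 0$. This is exactly where Proposition \ref{strict concavity1} enters; the argument genuinely breaks down in the arithmetic case, so $\nabla_{1/2}$ must be handled separately. A minor technical point is continuity of the scalar mean at boundary points $(0,\gamma)$ with $\gamma > 0$, which follows from the symmetric-mean identity $f_\sigma(t)/t = f_\sigma(1/t)$ and yields $0\sigma\gamma = \gamma f_\sigma(0^+) < \gamma/2$, so such boundary limits are in fact ruled out by the gap-vanishing.
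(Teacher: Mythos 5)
Your proposal is correct and follows essentially the same route the paper takes when it reproves (and generalizes) this statement in Theorem \ref{main theorem} via Lemmas \ref{lemma1} and \ref{lemma2}: monotone decrease of the (here unweighted) average from $X\sigma Y\le\nabla_{1/2}$, transfer to quadratic forms via Proposition \ref{transfer2}, subsequential compactness of the bounded scalar triples, and Proposition \ref{strict concavity1} to force equal limit points, with the arithmetic case $\nabla_{1/2}$ split off separately. Your only departures are cosmetic --- you extract the vanishing of each operator gap $\tfrac{X+Y}{2}-X\sigma Y$ before passing to scalars and handle $\nabla_{1/2}$ by the explicit identity $A_n-B_n=(-1/2)^n(A_0-B_0)$ instead of the Perron--Frobenius argument --- and both are sound.
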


\subsection{Main result}
Motivated by the preceding result, we investigate whether the assumption that 
the operator mean $\sigma$ is symmetric could be weakened, and obtain the following result. 
Before stating the result, 
we consider the following operator sequence 
constructed from arbitrary 
positive operators $A,B,C\in \mathcal{B}(\mathcal{H})_+$ and operator means 
$\sigma_1,\sigma_2,\sigma_3$:  
\begin{equation}\label{our sequence}
(A_{n+1}, B_{n+1}, C_{n+1}):= (B_n \sigma_1 C_n,\; C_n \sigma_2 A_n,\; A_n \sigma_3 B_n),
\end{equation}
where $(A_0, B_0, C_0):= (A, B, C)$. 
\begin{theorem}\label{main theorem} 
Let 
\( \sigma_1, \sigma_2, \sigma_3 \) 
be non-trivial operator means, and 
suppose that either all of them are arithmetic,  or 
 at most one of which is not strictly concave.
Then there exists a unique probability vector $(p_1,p_2,p_3)\in (0,1)^3$ such that 
for any \( A, B, C \in \mathcal{B}(\mathcal{H})_+ \), 
the sequences \( A_n, B_n, C_n \), defined as in (\ref{our sequence}) 
converge weakly to a common limit \( S \) and 
$${p_1A_n+p_2B_n+p_3C_n}  \downarrow S.$$
\end{theorem}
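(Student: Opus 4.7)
Let $r_i \in (0,1)$ be the weight of $\sigma_i$ (which lies in $(0,1)$ by non-triviality), and form the row-stochastic matrix
\[
T := \begin{pmatrix} 0 & 1-r_1 & r_1 \\ r_2 & 0 & 1-r_2 \\ 1-r_3 & r_3 & 0 \end{pmatrix}.
\]
A direct computation shows that $T^{2}$ has strictly positive entries, so $T$ is primitive; by Perron--Frobenius there is a unique probability vector $(p_1,p_2,p_3)\in(0,1)^{3}$ satisfying $(p_1,p_2,p_3)T=(p_1,p_2,p_3)$, and this is the vector claimed in the theorem. Since $f_{\sigma_{i}}$ is operator concave with $f_{\sigma_{i}}(1)=1$ and $f_{\sigma_{i}}'(1)=r_{i}$, one has $X\,\sigma_{i}\,Y\le(1-r_{i})X+r_{i}Y$ for all $X,Y\in B(\mathcal{H})_{+}$. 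Applying this to each line of (\ref{our sequence}) and using the left-eigenvector identity yields
\[
p_{1}A_{n+1}+p_{2}B_{n+1}+p_{3}C_{n+1}\le p_{1}A_{n}+p_{2}B_{n}+p_{3}C_{n},
\]
so $S_{n}:=p_{1}A_{n}+p_{2}B_{n}+p_{3}C_{n}$ decreases in SOT to some $S\ge 0$, and each of $A_{n},B_{n},C_{n}$ is uniformly bounded (by $S_{0}/\min_{j}p_{j}$).

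\textbf{Coincidence of the three limits.} Fix $v\in\mathcal{H}$ and write $a_{n}:=\langle A_{n}v\,|\,v\rangle$, and similarly $b_{n},c_{n}$. Proposition~\ref{transfer2} gives $a_{n+1}\le b_{n}\sigma_{1}c_{n}\le(1-r_{1})b_{n}+r_{1}c_{n}$, and likewise for $b_{n+1},c_{n+1}$. By boundedness and diagonal extraction, any subsequence has a further subsequence $\{n_{k}\}$ along which $(a_{n_{k}},b_{n_{k}},c_{n_{k}})\to(a,b,c)$ and $(a_{n_{k}+1},b_{n_{k}+1},c_{n_{k}+1})\to(a',b',c')$; since $\langle S_{n}v\,|\,v\rangle\to\langle Sv\,|\,v\rangle$, both $p_{1}a+p_{2}b+p_{3}c$ and $p_{1}a'+p_{2}b'+p_{3}c'$ equal $\langle Sv\,|\,v\rangle$. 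The sandwich
\[
p_{1}a'+p_{2}b'+p_{3}c'\le p_{1}(b\sigma_{1}c)+p_{2}(c\sigma_{2}a)+p_{3}(a\sigma_{3}b)\le p_{1}a+p_{2}b+p_{3}c
\]
is therefore a chain of equalities, which (since each $p_{j}>0$) forces $b\sigma_{1}c=(1-r_{1})b+r_{1}c$ and its two analogues to hold simultaneously. When at least two of the $\sigma_{i}$ are strictly concave, Proposition~\ref{strict concavity1} turns two of these scalar identities into pairwise equalities and hence collapses $(a,b,c)$ to a single value, necessarily $\langle Sv\,|\,v\rangle$. In the all-arithmetic case the recursion reads $(A_{n},B_{n},C_{n})^{\top}=T^{n}(A,B,C)^{\top}$ literally, and primitivity of $T$ gives $T^{n}\to(1,1,1)^{\top}(p_{1},p_{2},p_{3})$, so each coordinate converges in norm to $S$. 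Either way, every subsequential scalar limit equals $\langle Sv\,|\,v\rangle$, so $\langle A_{n}v\,|\,v\rangle,\langle B_{n}v\,|\,v\rangle,\langle C_{n}v\,|\,v\rangle\to\langle Sv\,|\,v\rangle$, and polarization upgrades this to WOT convergence of $A_{n},B_{n},C_{n}$ to $S$. Uniqueness of $(p_{1},p_{2},p_{3})$ is then immediate: any probability vector with the decreasing property must satisfy $(q_{1},q_{2},q_{3})T\le(q_{1},q_{2},q_{3})$ coordinatewise, and since the rows of $T$ sum to $1$ the inequalities must all be equalities, forcing $(q_{1},q_{2},q_{3})=(p_{1},p_{2},p_{3})$ by Perron--Frobenius.

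\textbf{Where the difficulty lies.} The delicate step is the sandwich above: one has to combine the scalarised transformer inequality (Proposition~\ref{transfer2}) with the tangent bound $\sigma_{i}\le\nabla_{r_{i}}$ and arrange them so that the left Perron eigenvector makes the outer estimates coincide in the limit. Only then does strict concavity (Proposition~\ref{strict concavity1}) bite, converting ``a mean equals its affine tangent'' into ``the two operands agree''. The hypothesis that at most one of $\sigma_{1},\sigma_{2},\sigma_{3}$ fail strict concavity is precisely what supplies the two coordinate equalities needed to identify $a,b,c$; the degenerate all-arithmetic case evades strict concavity altogether and must be handled separately by the spectral structure of the primitive matrix $T$.
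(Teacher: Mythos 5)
Your proof is correct and follows essentially the same route as the paper: the same stochastic matrix $\Gamma$ and Perron--Frobenius argument for $(p_1,p_2,p_3)$, the same decreasing combination $S_n$, the same compactness/subsequence argument combining Proposition~\ref{transfer2} with the tangent bound $\sigma_i\le\nabla_{r_i}$ so that Proposition~\ref{strict concavity1} forces $\alpha=\beta=\gamma$, and the same separate treatment of the all-arithmetic case. The only spot that is asserted rather than derived is that the decreasing property forces $(q_1,q_2,q_3)T\le(q_1,q_2,q_3)$ coordinatewise; the paper fills this in by testing on the diagonal triples $(t,1,1),(1,t,1),(1,1,t)$ and differentiating at $t=1$ (which in fact yields equality $qT=q$ directly), and your row-sum argument then becomes unnecessary though harmless.
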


\bigskip

To show this, we need some lemmas. 
Before stating lemmas, 
we define some terminology. 
We say that a sequence of bounded operators $A_n$ \emph{converges uniformly} to a bounded operator $A$ if 
$\|A_n - A\|$ converges to $0$.
\begin{lemma}\label{lemma1}
If \( \sigma_1, \sigma_2, \sigma_3\) are arithmetic, 
then there exists a unique probability vector $(p_1,p_2,p_3)\in (0,1)^3$ such that 
the sequences \( A_n, B_n, C_n \) converge uniformly to 
$p_1 A+p_2 B +p_3 C$ and
$$ p_1 A_n + p_2 B_n + p_3 C_n \downarrow p_1 A+p_2 B +p_3 C.$$
\end{lemma}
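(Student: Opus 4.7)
The plan is to reduce the lemma to classical Perron--Frobenius for a $3\times 3$ row-stochastic matrix. Since each $\sigma_i$ is arithmetic and non-trivial, its weight $r_i$ lies in $(0,1)$ and $X\sigma_i Y=(1-r_i)X+r_iY$. Consequently (\ref{our sequence}) becomes the entrywise linear recursion
\[
\begin{pmatrix} A_{n+1} \\ B_{n+1} \\ C_{n+1} \end{pmatrix}
= M\begin{pmatrix} A_n \\ B_n \\ C_n \end{pmatrix},
\qquad
M:=\begin{pmatrix} 0 & 1-r_1 & r_1 \\ r_2 & 0 & 1-r_2 \\ 1-r_3 & r_3 & 0 \end{pmatrix},
\]
a row-stochastic matrix with strictly positive off-diagonal entries.

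Next I would apply Perron--Frobenius to $M$. Its directed support graph is strongly connected and contains both the $2$-cycle $1\to 2\to 1$ and the $3$-cycle $1\to 2\to 3\to 1$; since $\gcd(2,3)=1$, the matrix $M$ is primitive. Therefore $1$ is a simple eigenvalue, all other eigenvalues have modulus strictly less than $1$, and there is a unique probability left eigenvector, with strictly positive entries; denote it $(p_1,p_2,p_3)\in(0,1)^3$. Uniqueness of the claimed probability vector is then immediate. Since $(1,1,1)^{\top}$ is a right Perron eigenvector, $M^n$ converges in norm (at a geometric rate) to the rank-one idempotent each of whose rows equals $(p_1,p_2,p_3)$. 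Applying this to the initial triple $(A,B,C)$ and using that convergence of $M^n$ entrywise in $\mathbb{R}$ transfers to uniform convergence in $B(\mathcal{H})$, one gets $A_n,B_n,C_n\to p_1A+p_2B+p_3C$ in operator norm.

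For the monotonicity statement, I would exploit the left-eigenvector identity $(p_1,p_2,p_3)M=(p_1,p_2,p_3)$ directly: multiplying the recursion from the left yields
\[
p_1A_{n+1}+p_2B_{n+1}+p_3C_{n+1}=p_1A_n+p_2B_n+p_3C_n
\]
for every $n$. Thus this combination is in fact \emph{constant} and equal to $p_1A+p_2B+p_3C$, so the $\downarrow$ convergence holds trivially with equality.

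There is no genuine obstacle here; the whole argument is finite-dimensional linear algebra performed entrywise in $B(\mathcal{H})$. The one point that deserves care is verifying \emph{primitivity} of $M$ rather than just irreducibility: without aperiodicity one would obtain only Ces\`aro convergence of $M^n$, whereas the lemma requires convergence of the sequences $A_n,B_n,C_n$ themselves. The short cycle-length computation above disposes of this.
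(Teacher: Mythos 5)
Your proposal is correct and follows essentially the same route as the paper: rewrite the recursion as powers of the row-stochastic matrix $\Gamma$ with zero diagonal and positive off-diagonal entries, apply Perron--Frobenius to get convergence of $\Gamma^n$ to the rank-one projection onto the positive left Perron vector $(p_1,p_2,p_3)$, and observe via the left-eigenvector identity that $p_1A_n+p_2B_n+p_3C_n$ is in fact constant. The only cosmetic difference is how aperiodicity is certified --- you use the gcd of cycle lengths, while the paper notes that all entries of $\Gamma^2$ are strictly positive --- and your explicit remark that mere irreducibility would give only Ces\`aro convergence is a point the paper handles in a separate remark via Gershgorin.
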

\begin{proof}
The recurrence relation defined in (\ref{our sequence}) 
 can be rewritten as follows: 
$$
\begin{pmatrix}
A_n \\ B_n \\ C_n 
\end{pmatrix}
=
\begin{pmatrix}
 0& 1-r_1  & r_1  \\
 r_2  & 0 &1-r_2  \\
1-r_3  & r_3  & 0
\end{pmatrix}^n
\begin{pmatrix}
A \\ B \\ C 
\end{pmatrix}, 
$$
where $r_k:={{d(1\sigma_k t)}\over {dt}} ~\big|_{t=1}$, \ $(k=1,2,3)$. 
By the Perron–Frobenius theorem for irreducible matrices 
\cite{BR, HJ}, 
there exists a unique probability vector \((p_1, p_2, p_3) \in [0,1]^3\) such that 
\begin{equation}\label{probability vector}
\begin{pmatrix}
p_1 & p_2 &p_3  \\
p_1 & p_2 &p_3  \\
p_1 & p_2 &p_3  
\end{pmatrix}
=
\lim_n 
\begin{pmatrix}
 0& 1-r_1  & r_1  \\
 r_2  & 0 &1-r_2  \\
1-r_3  & r_3  & 0
\end{pmatrix}^n, 
\end{equation}
which implies $p_k\in [0,1]$ and $p_1+p_2+p_3=1$. 
Put 
$$
P:=\begin{pmatrix}
p_1 & p_2 &p_3  \\
p_1 & p_2 &p_3  \\
p_1 & p_2 &p_3  
\end{pmatrix}
 \quad \text{ and  } \quad 
\Gamma:=
\begin{pmatrix}
 0& 1-r_1  & r_1  \\
 r_2  & 0 &1-r_2  \\
1-r_3  & r_3  & 0
\end{pmatrix} .
$$
Then we have 
$P=(\lim_n \Gamma^{n-2} )\  \Gamma^2$. 
Since all entries of $\Gamma^2$ are in $(0,1)$, 
each $p_k$ is in $(0,1)$.  
This concludes the first part of the proof. Next, we show that 
the sequence \( p_1 A_n + p_2 B_n + p_3 C_n \) is constant. It follows from the 
 recurrence relation defined in (\ref{our sequence}) that   
\begin{align*}
&p_{1} A_{n+1} +p_2 B_{n+1} +p_3 C_{n+1}\\
&=
 p_{1} (B_n \nabla_{r_{1}} C_n) +p_2(C_n \nabla_{r_2} A_n) + p_3(A_n \nabla_{r_3} B_n) \\
&=
(p_2r_2+p_3(1-r_3))A_n +
(p_{1}(1-r_{1})+ p_3r_3 )B_n
+(p_{1} r_{1} + p_2 (1-r_2))C_n \\
&=
p_{n+1} A_n +p_2 B_n +p_3 C_n,
\end{align*}
which completes the proof. 
\end{proof}
{\Remark\label{Perron projection} 
Let $p_k\in (0,1)$ and let $r_k \in (0,1)$ $(k=1,2,3)$. 
If $\sum_k p_k=1$, then the matrices $P,\Gamma$ defined as above 
satisfy the following \cite{HJ}: 
$$P\Gamma=P\Leftrightarrow P=\lim_n \Gamma^n.$$
}
{\Remark\label{Perron projection2} 
Let $\Gamma$ be any $n \times n$ (right) stochastic matrix whose diagonal entries are all zero and whose off-diagonal entries are all strictly positive.  
By the Gershgorin circle theorem, all eigenvalues of $\Gamma$ lie within the closed unit disk in the complex plane centered at the origin.  
The Perron--Frobenius eigenvalue of $\Gamma$ is $1$, and all other eigenvalues have modulus strictly less than $1$.  
Therefore, there exists a probability vector $p = (p_1, \ldots, p_n)$ such that
\[
\lim_{m \to \infty} \Gamma^m = (1, \ldots, 1)^t p,
\]
where the limit is taken entrywise. As mentioned in the proof of the previous lemma, each component of $p$ is strictly positive (cf.~\cite{HJ}).

If the assumptions of the above claim are not satisfied, then $\Gamma^m$ may fail to converge. For example, if $\Gamma$ is the $3 \times 3$ cyclic shift matrix, then $\Gamma^m$ does not converge. Furthermore, even if we define 
\[\Gamma:=
\begin{pmatrix} 
0 & 1 & 0\\ 
0 & 0 & 1 \\
1 & 0 & 0 
\end{pmatrix}
\otimes 
\begin{pmatrix} \frac{1}{2} & \frac{1}{2} \\ \frac{1}{2} & \frac{1}{2} \end{pmatrix},
\]
the sequence $\Gamma^m$ still does not converge.
This example is related to Theorem \ref{main theorem2}. 
}

\begin{lemma}\label{lemma2} 
Let 
\( \sigma_1, \sigma_2, \sigma_3 \) 
be non-trivial operator means, at most one of which is not strictly concave.
then there exists a unique probability vector $(p_1,p_2,p_3)\in (0,1)^3$ such that 
the sequences \( A_n, B_n, C_n \) 
converge weakly to a common limit \( S \) and 
$${p_1A_n+p_2B_n+p_3C_n}  \downarrow S.$$
\end{lemma}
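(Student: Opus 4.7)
The plan is to mimic the proof of Lemma~\ref{lemma1}, replacing the arithmetic identities used there by the tangent-line inequality $X \sigma_k Y \le (1 - r_k) X + r_k Y$ (valid for all $X, Y \in B(\mathcal{H})_+$ because $f_{\sigma_k}$ is operator concave with $f_{\sigma_k}(1)=1$ and $f_{\sigma_k}'(1) = r_k$), and then using the strict concavity of at least two of the $\sigma_k$'s to force the three sequences to asymptotically agree. Let $r_k$ denote the weight of $\sigma_k$, and take the matrix $\Gamma$ and probability vector $(p_1, p_2, p_3) \in (0,1)^3$ exactly as in the proof of Lemma~\ref{lemma1}, so that $p\Gamma = p$. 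Substituting the tangent-line inequality into (\ref{our sequence}) and using $p\Gamma = p$ yields
\[
p_1 A_{n+1} + p_2 B_{n+1} + p_3 C_{n+1} \le p_1 A_n + p_2 B_n + p_3 C_n,
\]
so the left-hand side is decreasing in the operator order and bounded below by $0$, and therefore converges in the strong operator topology to some $S \ge 0$.

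For the weak convergence of the individual sequences, fix a unit vector $v \in \mathcal{H}$ and put $a_n := \langle A_n v \mid v\rangle$ and similarly $b_n, c_n$. Combining Proposition~\ref{transfer2} with the tangent-line inequality produces the two-step chain
\[
p_1 a_{n+1} + p_2 b_{n+1} + p_3 c_{n+1} \le p_1 (b_n \sigma_1 c_n) + p_2 (c_n \sigma_2 a_n) + p_3 (a_n \sigma_3 b_n) \le p_1 a_n + p_2 b_n + p_3 c_n.
\]
Both outer terms converge to $\langle Sv \mid v\rangle$, so both gaps vanish; in particular, for each $k$ the nonnegative scalar concavity deficit $(1-r_k) X_n + r_k Y_n - X_n \sigma_k Y_n$ tends to $0$, where $(X_n, Y_n)$ is the relevant pair among $(b_n, c_n)$, $(c_n, a_n)$, $(a_n, b_n)$.

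The strict concavity hypothesis now converts each such deficit convergence into pairwise agreement. Whenever $\sigma_k$ is strictly concave, Proposition~\ref{strict concavity1} (together with the scalar homogeneity $\lambda x \sigma_k \lambda y = \lambda (x \sigma_k y)$, which handles the origin) shows that the nonnegative continuous function $(x, y) \mapsto (1 - r_k) x + r_k y - x \sigma_k y$ on the compact square $[0, K]^2$, with $K := \max(\|A\|, \|B\|, \|C\|)$, vanishes exactly on the diagonal; a standard compactness argument then forces $|X_n - Y_n| \to 0$ for the corresponding pair. Since by hypothesis at most one of $\sigma_1, \sigma_2, \sigma_3$ fails to be strictly concave, at least two of $a_n - b_n$, $b_n - c_n$, $c_n - a_n$ tend to $0$, and the third follows from $(a_n - b_n) + (b_n - c_n) + (c_n - a_n) = 0$. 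Combined with $p_1 a_n + p_2 b_n + p_3 c_n \to \langle Sv \mid v\rangle$ and $p_1 + p_2 + p_3 = 1$, each of $a_n, b_n, c_n$ converges to $\langle Sv \mid v\rangle$. Since $v$ was arbitrary and the sequences are uniformly norm-bounded and self-adjoint, $A_n, B_n, C_n \to S$ in the weak operator topology; uniqueness of $(p_1, p_2, p_3)$ is Perron--Frobenius, exactly as in Lemma~\ref{lemma1}.

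The step I expect to demand the most care is this last compactness argument: joint continuity of $\sigma_k$ on $[0, K]^2 \setminus \{(0,0)\}$ is immediate from the representation $x \sigma_k y = x f_{\sigma_k}(y/x)$ (and its transpose), but ruling out nondiagonal zeros of the scalar concavity deficit, including at the origin, requires a separate invocation of scalar homogeneity together with Proposition~\ref{strict concavity1}. The rest is a careful adaptation of the arithmetic-case bookkeeping of Lemma~\ref{lemma1}, with inequalities in place of the equalities used there.
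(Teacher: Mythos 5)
Your treatment of the convergence part is correct and follows the paper's strategy in all essentials: take the Perron vector $p$ of $\Gamma$, use the tangent-line bound $X\sigma_k Y \le (1-r_k)X + r_k Y$ to make $S_n := p_1A_n+p_2B_n+p_3C_n$ decreasing with strong limit $S$, reduce to scalar sequences via Proposition~\ref{transfer2}, and invoke Proposition~\ref{strict concavity1} for the two strictly concave means to force $a_n,b_n,c_n$ to the common limit $\langle Sv\mid v\rangle$. Your ``concavity deficit $\to 0$ plus compactness of $[0,K]^2$'' step is a mild repackaging of the paper's argument, which instead extracts a convergent subsequence of $(\langle A_nv\mid v\rangle,\langle B_nv\mid v\rangle,\langle C_nv\mid v\rangle)$ and applies Proposition~\ref{strict concavity1} to its limit; the two are interchangeable and rest on the same facts (joint continuity of $(x,y)\mapsto x\sigma_k y$ on $[0,K]^2$ and the diagonal characterization of equality in the tangent-line inequality).

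The genuine gap is the uniqueness of $(p_1,p_2,p_3)$, which you dispose of with ``is Perron--Frobenius, exactly as in Lemma~\ref{lemma1}.'' Perron--Frobenius gives uniqueness of the probability vector fixed by $\Gamma$, i.e.\ $q\Gamma=q$ implies $q=p$; it does not by itself show that every probability vector $q$ for which $q_1A_n+q_2B_n+q_3C_n$ decreases to the common limit for all inputs must satisfy $q\Gamma=q$. In the arithmetic setting of Lemma~\ref{lemma1} that implication is cheap because the recursion is exactly the linear map $\Gamma$; here the recursion is only \emph{dominated} by $\Gamma$, so a decreasing $q$-combination yields inequalities rather than the fixed-point identity. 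The paper closes this by testing the single inequality $S_0\ge S_1$ on the diagonal triples $A=\mathrm{diag}(t,1,1)$, $B=\mathrm{diag}(1,t,1)$, $C=\mathrm{diag}(1,1,t)$: each diagonal entry gives two functions of $t>0$ that agree at $t=1$, the dominating side being linear in $t$, so the difference attains its minimum at the interior point $t=1$ and the derivatives there must coincide; this produces exactly the linear system~(\ref{linear}), i.e.\ $q\Gamma=q$, after which Perron--Frobenius applies. You need this step (or some other argument pinning any admissible $q$ to the stationary distribution of $\Gamma$) for the uniqueness assertion of the lemma to be proved.
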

\begin{proof}
Assume that $\sigma_1$ and $\sigma_2$ are strictly concave.
Let $r_k$ $(k=1,2,3)$ be the weight of $\sigma_k$ and let 
$(p_1,p_2,p_3)$ be the probability vector defined as in the proof of 
Lemma \ref{lemma1}. 

From the defintition of $A_n,B_n, C_n$ we have 
\begin{align*}
0&\le p_1A_{n+1}+p_2B_{n+1}+p_3C_{n+1} \\
&=p_1 B_n\sigma_1C_n + p_2C_n\sigma_2 A_n + p_3 A_n\sigma_3 B_n \\
&\le p_1(1-r_1)B_n+p_1 r_1 C_n
+
p_2 (1-r_2)C_n+p_2 r_2 A_n \\
&{\color{white}++++++++++++++}+
p_3 (1-r_3)A_n+p_3r_3 B_n \\
&=
  p_1A_n+p_2B_n+p_3C_n ,
\end{align*}
which implies that $S_n:=p_1A_n+p_2B_n+p_3C_n$ is bounded and decreasing sequence, 
so it has the strong limit $S$. 

We next show that for any nonzero vector $v$, 
sequences  $\langle A_n v~|~v\rangle ,\langle B_nv~|~v\rangle$ and 
$\langle C_nv~|~v\rangle$ converge to the same limit $\langle Sv~|~v\rangle$. 
If we put $M:=\max\{\|A\|,\|B\|,|C\|\}$, then the set 
$$\{ (\langle A_n v~|~v\rangle ,\langle B_nv~|~v\rangle, \langle C_nv~|~v\rangle) ~|~n\ge 0\}
$$
is contained in a compact set $[0,M]^3(\subset {\Bbb R}^3)$.
Thus, the sequence 
$$t_n:=(\langle A_n v~|~v\rangle ,\langle B_nv~|~v\rangle, \langle C_nv~|~v\rangle)$$  
has a convergent subsequence $t_{\varphi(n)}$, and its limit $(\alpha,\beta,\gamma)$ 
lies in $[0,M]^3$.  

It follows from the monotonicity of $S_n$ and Proposition \ref{transfer2} that 
\begin{align*}
&\langle (p_1A_{\varphi(n+1) }+p_2B_{\varphi(n+1) }+p_3C_{\varphi(n+1) })v~|~v\rangle \\
&\le 
\langle (p_1A_{\varphi(n)+1 }+p_2B_{\varphi(n)+1 }+p_3C_{\varphi(n)+1 })v~|~v\rangle \\
&=
p_1\langle A_{\varphi (n)+1}v~|~v\rangle +
p_2\langle B_{\varphi (n)+1}v~|~v\rangle +
p_3\langle C_{\varphi (n)+1}v~|~v\rangle \\
&\le 
p_1\langle B_{\varphi (n)}v~|~v\rangle \sigma_1 
\langle C_{\varphi (n)}v~|~v\rangle \\
&{\color{white}+++}+
p_2\langle C_{\varphi (n)}v~|~v\rangle \sigma_2 
\langle A_{\varphi (n)}v~|~v\rangle \\
&{\color{white}+++}+ 
p_3\langle A_{\varphi (n)}v~|~v\rangle \sigma_3 
\langle B_{\varphi (n)}v~|~v\rangle \\
&\le
p_1(1-r_1) \langle B_{\varphi (n)}v~|~v\rangle + p_1r_1 \langle C_{\varphi (n)}v~|~v\rangle \\ 
&{\color{white}+++}+
p_2(1-r_2) \langle C_{\varphi (n)}v~|~v\rangle +p_2r_2 \langle A_{\varphi (n)}v~|~v\rangle \\
&{\color{white}+++}+
p_3(1-r_3) \langle A_{\varphi (n)}v~|~v\rangle + p_3r_3 \langle B_{\varphi (n)}v~|~v\rangle \\
&=
p_1\langle A_{\varphi (n)}v~|~v\rangle +
p_2\langle B_{\varphi (n)}v~|~v\rangle +
p_3\langle C_{\varphi (n)}v~|~v\rangle .
\end{align*}
Taking the limit of each term, the followings hold
$$p_1\alpha + p_2\beta +p_3\gamma 
\le p_1\beta\sigma_1 \gamma + p_2\gamma \sigma_2 \alpha + p_3\alpha \sigma_3 \beta 
\le p_1\alpha + p_2\beta +p_3\gamma .
$$
Thus 
$$\beta \sigma_1 \gamma = (1-r_1) \beta + r_1 \gamma, \quad \gamma \sigma_2 \alpha=(1-r_2) \gamma +r_2 \alpha . $$
From Proposition \ref{strict concavity1} and  
the assumption that $\sigma_1,\sigma_2$ are strictly concave, 
we have $\alpha=\beta=\gamma$. 
So 
$$
\alpha=\beta=\gamma=p_1\alpha+p_2\beta+p_3\gamma=
\lim_n \langle S_{\varphi(n)} v~|~v\rangle=\langle S v~|~v\rangle.
$$
Thus, every convergent subsequence of $t_n$ has the limit 
$$(\langle S v~|~v\rangle, \langle S v~|~v\rangle, \langle S v~|~v\rangle ).$$
This implies that $t_n$ is a convergent sequence. 

Finally, we show the uniqueness of the probability vector $p:=(p_1,p_2,p_3)$. 
For simplicity, we denote the representation function of $\sigma_k$ by $f_k$. 
For $t>0$, put 
$$
A:=\begin{pmatrix}
t &  &   \\
  & 1 &   \\
  &   &1  
\end{pmatrix}, \ \ 
B:=\begin{pmatrix}
1 &  &   \\
  & t &   \\
  &   &1  
\end{pmatrix}, \ \ 
C:=\begin{pmatrix}
1 &  &   \\
  & 1 &   \\
  &   &t  
\end{pmatrix}. 
$$
Then 
the relation $S_0 \ge S_1$ implies 
$$p_1 t+ p_2+p_3 \ge p_1 + p_2f_2(t) + p_3f_3^\circ(t),$$
$$p_1 + p_2t+p_3\ge p_1 f_1^\circ(t) +p_2 + p_3 f_3(t),$$
and
$$p_1 + p_2+p_3 t \ge p_1 f_1(t) +p_2f_2^\circ(t) + p_3 ,$$
where $f_k^\circ(t):=tf_k(1/t)$. 

The inequality above holds for all $t>0$, 
so by differentiating both sides at $t=1$, 
we obtain the following linear equation:  
\begin{equation}\label{linear}
\left\{ \,
    \begin{aligned}
    & r_2 p_2+(1-r_3 )p_3=p_1 \\
    & (1-r_1 )p_1+r_3 p_3=p_2 \\
    &r_1 p_1+ (1-r_2 )p_2=p_3, 
   \end{aligned}
\right.
\end{equation}
namely,  $p\Gamma=p$. Since the Perron eigenvalue of $\Gamma$ is $1$, 
 the probability vector $p=(p_1,p_2,p_3)$ is unique \cite{HJ}.
 \end{proof}

\begin{proof}[Proof of Theorem \ref{main theorem}]
Immediate from Lemma \ref{lemma1} and Lemma \ref{lemma2}. 
\end{proof}
Theorem \ref{main theorem} determines a unique probability vector \((p_1, p_2, p_3)\) 
corresponding to each admissible triple of two-variable  operator means \((\sigma_1,\sigma_2,\sigma_3)\).
{\color{black}
We refer to this as the probability vector corresponding to $(\sigma_1,\sigma_2,\sigma_3)$, 
and to 
the sequence of triples of operators $(A_n,B_n,C_n)$ defined by (\ref{our sequence}) 
as the \emph{ALM-sequence} from  $(A,B,C)$, with $(\sigma_1,\sigma_2,\sigma_3)$.  
}

\bigskip

{\Remark 
Solving (\ref{linear}), we obtain
$$
\begin{pmatrix}
p_1 \\
p_2\\
p_3
\end{pmatrix}
=
{1\over D}
\begin{pmatrix}
(1-r_3 )+r_2 r_3  \\
(1-r_1 )+r_3 r_1 \\
(1-r_2 )+r_1 r_2 
\end{pmatrix},
$$
where $
D=(1-r_1 )+(1-r_2 )+(1-r_3 )+r_1 r_2 +r_2 r_3 +r_3 r_1. 
$
This makes clear the correspondence 
$(r_1,r_2,r_3) \mapsto (p_1,p_2,p_3)\in (0,1)^3$. 
We denote this map by $\varphi$. 
It follows from repeated simple calculations that 
the map $\varphi: (0,1)^3 \to 
\{(p_1,p_2,p_3)\in (0,1)^3~|~ \sum_k p_k=1\}$ 
is neither injective nor surjective.

\bigskip

}

By the above argument, we can define a map $M_{\sigma_1,\sigma_2,\sigma_3}$
that maps a triple of positive operators $(A,B,C)$ to 
the weak limit of the sequence $A_n$. 
Let us show that the map $M_{\sigma_1,\sigma_2,\sigma_3}$
 is a multivariate (three-variable) operator mean. 
The properties of monotonicity, transformer inequality, and the normalized condition of $M_{\sigma_1,\sigma_2,\sigma_3}$ 
are clear. We now prove downward continuity below. 
Although the proof is almost the same as that in \cite{U2}, 
we prove it here for the reader's convenience.
\begin{proposition}\label{downward}
Let \( A, B, C \geq 0 \) be positive operators, and suppose
\[
A^{(k)} \downarrow A, \quad B^{(k)} \downarrow B, \quad C^{(k)} \downarrow C.
\]
Then,
\[
M_{\sigma_1,\sigma_2,\sigma_3}(A^{(k)},B^{(k)},C^{(k)}) \downarrow 
M_{\sigma_1,\sigma_2,\sigma_3}(A,B,C).
\]
\end{proposition}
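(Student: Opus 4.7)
The plan is to approximate $M_k := M_{\sigma_1,\sigma_2,\sigma_3}(A^{(k)}, B^{(k)}, C^{(k)})$ from above using the sums $p_1 A^{(k)}_n + p_2 B^{(k)}_n + p_3 C^{(k)}_n$ furnished by Theorem \ref{main theorem}, and pass to the limit in $k$ first and in $n$ afterwards. Write $M_\infty := M_{\sigma_1,\sigma_2,\sigma_3}(A, B, C)$, and let $(A^{(k)}_n, B^{(k)}_n, C^{(k)}_n)$ and $(A_n, B_n, C_n)$ denote the ALM-sequences from $(A^{(k)}, B^{(k)}, C^{(k)})$ and $(A, B, C)$, respectively. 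By the already-established monotonicity of $M_{\sigma_1,\sigma_2,\sigma_3}$, the sequence $\{M_k\}$ is decreasing and bounded below by $M_\infty$, so it converges strongly to some $M_\infty'$ with $M_\infty' \ge M_\infty$; the goal is to prove the reverse inequality.

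First I would show by induction on $n \ge 0$ that $A^{(k)}_n \downarrow A_n$, $B^{(k)}_n \downarrow B_n$, and $C^{(k)}_n \downarrow C_n$ as $k \to \infty$. The base case $n = 0$ is the hypothesis. For the inductive step, the monotonicity axiom of each two-variable mean $\sigma_i$ ensures that, for instance, $B^{(k)}_n \sigma_1 C^{(k)}_n$ is decreasing in $k$, and the downward-continuity axiom for $\sigma_i$ then gives
\[
B^{(k)}_n \sigma_1 C^{(k)}_n \downarrow B_n \sigma_1 C_n = A_{n+1},
\]
and analogously for the other two components of the recursion (\ref{our sequence}).

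Next, by Theorem \ref{main theorem} applied to the ALM-sequence from $(A^{(k)},B^{(k)},C^{(k)})$, the inequality
\[
M_k \le p_1 A^{(k)}_n + p_2 B^{(k)}_n + p_3 C^{(k)}_n
\]
holds for every $n, k$. Fixing $n$ and letting $k \to \infty$, the left side decreases strongly to $M_\infty'$ and, by the first step, the right side decreases strongly to $p_1 A_n + p_2 B_n + p_3 C_n$; since positive operator inequalities are preserved under strong limits, this yields $M_\infty' \le p_1 A_n + p_2 B_n + p_3 C_n$. Sending $n \to \infty$ and invoking Theorem \ref{main theorem} one more time, the right-hand side decreases to $M_\infty$, so $M_\infty' \le M_\infty$, and hence $M_\infty' = M_\infty$, which is exactly the asserted downward continuity.

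The only step that requires any care is the inductive identification of $(A^{(k)}_n, B^{(k)}_n, C^{(k)}_n)$ with $(A_n, B_n, C_n)$ in the limit, where one must apply the two-variable downward continuity of each $\sigma_i$ in both arguments simultaneously; the key point making this work is that the monotonicity axiom of $\sigma_i$ automatically guarantees the $k$-monotonicity of each newly produced operator, so that the downward-continuity axiom applies directly without any additional estimate. Everything else reduces to standard manipulations with monotone decreasing sequences of positive operators, following the pattern of the corresponding argument in \cite{U2}.
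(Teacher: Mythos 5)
Your proposal is correct and follows essentially the same route as the paper: both arguments hinge on the doubly-indexed family $S^{(k)}_n = p_1 A^{(k)}_n + p_2 B^{(k)}_n + p_3 C^{(k)}_n$, the facts that $S^{(k)}_n \downarrow S_n$ as $k\to\infty$ (via monotonicity and downward continuity of the $\sigma_i$, applied inductively) and $S^{(k)}_n \downarrow M_k$ as $n\to\infty$, and an interchange of the two limits. The only cosmetic difference is that you take $k\to\infty$ first and then $n\to\infty$ using preservation of operator inequalities under strong limits, whereas the paper telescopes the quadratic forms and chooses $n$ then $k$; you also spell out the inductive step that the paper leaves implicit.
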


\begin{proof}
For simplicity, we denote \( M_{\sigma_1,\sigma_2,\sigma_3} \) by \( M \).  
To show that the bounded monotone sequence \( M(A^{(k)}, B^{(k)}, C^{(k)}) \) converges strongly to \( M(A, B, C) \),  
it suffices to prove that it converges weakly to \( M(A, B, C) \).

{\color{black}
Let \( (A^{(k)}_n, B^{(k)}_n, C^{(k)}_n) \) be 
the ALM-sequence from $ (A^{(k)}, B^{(k)}, C^{(k)}) $, with 
$(\sigma_1,\sigma_2,\sigma_3 )$. }
If we define \( S_n \) and \( S^{(k)}_n \) by
\[
S_n:= p_1 A_n + p_2 B_n + p_3 C_n, \quad 
S^{(k)}_n:= p_1 A^{(k)}_n + p_2 B^{(k)}_n + p_3 C^{(k)}_n,
\]
then \( S^{(k)}_n \downarrow S_n \) as \( k \to \infty \).  
Furthermore,
\[
S^{(k)}_n \downarrow M(A^{(k)}, B^{(k)}, C^{(k)}) \quad (n \to \infty).
\]

Hence, for any \( k, n \), we obtain
\begin{align*}
0 &\leq \langle M(A^{(k)}, B^{(k)}, C^{(k)})x | x \rangle 
    - \langle M(A, B, C)x | x \rangle \\
&\leq \langle S^{(k)}_n x | x \rangle - \langle M(A, B, C)x | x \rangle \\
&= \langle S^{(k)}_n x | x \rangle - \langle S_n x | x \rangle 
    + \langle S_n x | x \rangle - \langle M(A, B, C)x | x \rangle.
\end{align*}

Therefore, by choosing \( n \) sufficiently large, and then \( k \) sufficiently large depending on \( n \),  
we can see that 
\[
\langle M(A^{(k)}, B^{(k)}, C^{(k)})x | x \rangle - \langle M(A, B, C)x | x \rangle
\]
is arbitrarily small. This completes the proof.
\end{proof}

To conclude this section, we present some examples of three-variable operator means.
Here, each \((p_1, p_2, p_3)\) is the probability vector corresponding to the given triple of two-variable  means. 
{\example\label{arithmetic} 
From Lemma \ref{lemma1}, for $A,B,C \ge 0$ and for $\nabla_{r_1},\nabla_{r_2},\nabla_{r_3}$, 
the sequences $A_n,B_n,C_n$ defined as (\ref{our sequence}) uniformly converge to
$p_1A+p_2B+p_3C$, namely, $M_{\nabla_{r_1},\nabla_{r_2},\nabla_{r_3}}(A,B,C)=p_1A+p_2B+p_3C$.
}
{\example 
Although it is difficult to express \(M_{\#_{r_1}, \#_{r_2}, \#_{r_3}}(A,B,C)\) in a single formula in general,
it can be written in a simple form when positive operators $A,B,C$ commute with each other.  
From the congruence invariance and downward continuity of \(M_{\#_{r_1}, \#_{r_2}, \#_{r_3}}\),  
we may assume without loss of generality that \(A, B, C \ge I\).
Put $X:=\log_e A$, $Y:=\log_e B$, $Z:=\log_e C$. Then 
the ALM-sequence $(A_n,B_n,C_n)$ 
from $(A,B,C)$, with $(\#_{r_1}, \#_{r_2}, \#_{r_3})$ 
can be written as follows: 
\begin{align*}
(A_1,B_1,C_1)&=( B\#_{r_1} C,  C\#_{r_2} A,A\#_{r_3} B)\\
&=( e^{Y\nabla_{r_1} Z} ,e^{ Z\nabla_{r_2} X}, e^{X\nabla_{r_3} Y} ),
\end{align*}
$$(A_n,B_n,C_n)=( e^{X_n} ,e^{Y_n}, e^{Z_n} ),$$
{\color{black}
where $(X_n,Y_n,Z_n)$ is the ALM-sequence 
from $(X,Y,Z)$, with $(\nabla_{r_1},\nabla_{r_2},\nabla_{r_3})$.}
Since $X_n, Y_n,Z_n$ converge uniformly to $p_1X+p_2Y+p_3Z$, 
$A_n,B_n,C_n$ converge uniformly to $e^{p_1X+p_2Y+p_3Z}=A^{p_1}B^{p_2}C^{p_3}$. Thus, we have 
$$M_{\#_{r_1}, \#_{r_2}, \#_{r_3}}(A,B,C)=A^{p_1}B^{p_2}C^{p_3},$$
when $A,B,C$ commute with each other. 
}
{\example\label{harmonic} 
Let $r\in [0,1]$.  
 The two-variable  harmonic mean $!_r$ is defined as an operator mean 
 whose representing function is given by $t\mapsto {t\over {r+(1-r)t}}$. 
For a triple of positive invertible operators \((A,B,C)\) and 
two-variable  harmonic means \(!_{r_1}, !_{r_2}, !_{r_3}\),   
let \((A_n, B_n, C_n)\) be {\color{black}
the ALM-sequence from $(A,B,C)$, with $(!_{r_1}, !_{r_2}, !_{r_3})$. }
Then 
\begin{align*}
\begin{pmatrix}
A_n^{-1} \\ B_n^{-1} \\ C_n^{-1} 
\end{pmatrix}
&=
\begin{pmatrix}
 0& 1-r_1 & r_1 \\
 r_2 & 0 &1-r_2 \\
1-r_3 & r_3 & 0
\end{pmatrix}
\begin{pmatrix}
A_{n-1}^{-1} \\ B_{n-1}^{-1} \\ C_{n-1}^{-1} 
\end{pmatrix} \\
&=
\begin{pmatrix}
 0& 1-r_1 & r_1 \\
 r_2 & 0 &1-r_2 \\
1-r_3 & r_3 & 0
\end{pmatrix}^n
\begin{pmatrix}
A^{-1} \\ B^{-1} \\ C^{-1} 
\end{pmatrix} .
\end{align*}
Thus, $A_n^{-1}$ converges  uniformly to $p_1A^{-1}+p_2B^{-1}+p_3C^{-1}$. So, 
$$M_{!_{r_1},!_{r_2},!_{r_3}}(A,B,C)=(p_1A^{-1}+p_2B^{-1}+p_3C^{-1})^{-1}.$$ 
}

\subsection{Properties of  $M_{\sigma_1, \sigma_2, \sigma_3}$ }
Let \((\sigma_1, \sigma_2, \sigma_3)\) and \((\sigma_1', \sigma_2', \sigma_3')\) be two triples of operator means satisfying the assumptions of Theorem \ref{main theorem}. 
In this subsection, we discuss some properties of \(M_{\sigma_1, \sigma_2, \sigma_3}\). 
The following is obvious from the construction of 
\(M_{\sigma_1, \sigma_2, \sigma_3}\).
\begin{proposition} \label{mean order}
If \(\sigma_k \le \sigma_k'\) for \(k = 1, 2, 3\), then 
$$M_{\sigma_1,\sigma_2,\sigma_3}(A,B,C)\le 
M_{\sigma_1',\sigma_2',\sigma_3'}(A,B,C).
$$
\end{proposition}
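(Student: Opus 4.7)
The plan is to compare the two ALM-sequences from $(A,B,C)$ term by term and then pass to the weak limit. Let $(A_n,B_n,C_n)$ denote the ALM-sequence from $(A,B,C)$ with $(\sigma_1,\sigma_2,\sigma_3)$, and let $(A_n',B_n',C_n')$ denote the ALM-sequence from $(A,B,C)$ with $(\sigma_1',\sigma_2',\sigma_3')$. Both triples satisfy the hypotheses of Theorem \ref{main theorem}, so by definition of $M_{\sigma_1,\sigma_2,\sigma_3}$ we have $A_n \to M_{\sigma_1,\sigma_2,\sigma_3}(A,B,C)$ and $A_n' \to M_{\sigma_1',\sigma_2',\sigma_3'}(A,B,C)$ weakly.

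The core step is an induction on $n$ showing
\[
A_n \le A_n', \quad B_n \le B_n', \quad C_n \le C_n'.
\]
The base case $n=0$ is immediate since both sequences start at $(A,B,C)$. For the inductive step, the recurrence \eqref{our sequence} gives
\[
A_{n+1} = B_n \sigma_1 C_n \le B_n' \sigma_1 C_n' \le B_n' \sigma_1' C_n' = A_{n+1}',
\]
where the first inequality uses the monotonicity axiom of the two-variable mean $\sigma_1$ applied to the inductive hypothesis $B_n \le B_n'$, $C_n \le C_n'$, and the second uses the pointwise comparison $\sigma_1 \le \sigma_1'$. The analogous two-step chains, with $\sigma_2$ and $\sigma_3$ in place of $\sigma_1$, give $B_{n+1} \le B_{n+1}'$ and $C_{n+1} \le C_{n+1}'$.

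To conclude, for every $x \in \mathcal{H}$ and every $n$, the inductive inequality yields
\[
\langle A_n x \mid x\rangle \le \langle A_n' x \mid x\rangle.
\]
Letting $n \to \infty$ and using the weak convergence of both sequences gives
\[
\langle M_{\sigma_1,\sigma_2,\sigma_3}(A,B,C) x \mid x\rangle \le \langle M_{\sigma_1',\sigma_2',\sigma_3'}(A,B,C) x \mid x\rangle,
\]
which is the required operator inequality. Since the statement is called obvious in the text, I expect no genuine obstacle; the only mild subtlety is to record that the hypothesis $\sigma_k \le \sigma_k'$ is interpreted as the standard pointwise operator inequality $X \sigma_k Y \le X \sigma_k' Y$ for all $X,Y \in B(\mathcal{H})_+$, which by the Kubo--Ando correspondence recalled in Section 2 is equivalent to $f_{\sigma_k} \le f_{\sigma_k'}$ on $(0,\infty)$.
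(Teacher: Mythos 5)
Your proposal is correct and is precisely the argument the paper has in mind: the paper simply declares the proposition ``obvious from the construction,'' and the content of that remark is exactly your induction $A_n \le A_n'$, $B_n \le B_n'$, $C_n \le C_n'$ (via monotonicity of $\sigma_k$ plus the pointwise comparison $\sigma_k \le \sigma_k'$) followed by passage to the weak limit. No discrepancy in approach; you have merely supplied the details the paper omits.
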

\begin{corollary} 
Let $\sigma_1,\sigma_2, \sigma_3$ be a triple of operator means satisfying the assumptions of Theorem \ref{main theorem} and let $(p_1,p_2,p_3)$ be the probability vector corresponding to 
$(\sigma_1,\sigma_2, \sigma_3)$. Then 
$$(p_1A^{-1}+p_2B^{-1}+p_3C^{-1})^{-1} \le M_{\sigma_1, \sigma_2, \sigma_3} (A,B,C) \le
p_1A+p_2B+p_3C,$$
for all $A,B,C >0$. 
\end{corollary}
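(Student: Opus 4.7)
The plan is to sandwich each $\sigma_k$ between the harmonic mean $!_{r_k}$ and the arithmetic mean $\nabla_{r_k}$ of the same weight $r_k$, and then appeal to Proposition \ref{mean order} together with the closed-form computations from Examples \ref{arithmetic} and \ref{harmonic}.

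First I would invoke the standard Kubo--Ando fact that for every non-trivial operator mean $\sigma$ with weight $r\in(0,1)$ one has $!_r\le\sigma\le\nabla_r$. The right-hand inequality is immediate from the operator monotone (hence concave) representing function $f_\sigma$ and its slope at $1$: concavity gives $f_\sigma(t)\le 1+r(t-1)$. The left-hand inequality follows by applying the same concavity estimate to the transpose $f_\sigma^\circ(t)=tf_\sigma(1/t)$, which is operator monotone with the same weight $r$, and then inverting to get $f_\sigma(t)\ge t/(r+(1-r)t)$.

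Next, let $r_k$ be the weight of $\sigma_k$. The explicit formula in the Remark following Theorem \ref{main theorem} shows that the probability vector $\varphi(r_1,r_2,r_3)$ depends only on the weights; hence $(\nabla_{r_1},\nabla_{r_2},\nabla_{r_3})$, $(\sigma_1,\sigma_2,\sigma_3)$, and $(!_{r_1},!_{r_2},!_{r_3})$ all correspond to the same probability vector $(p_1,p_2,p_3)$. The arithmetic triple satisfies the hypotheses of Theorem \ref{main theorem} directly (first alternative), and the harmonic triple does so as well because each $!_{r_k}$ is non-trivial and non-arithmetic, hence strictly concave by the Remark after Proposition \ref{strict concavity1}. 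Proposition \ref{mean order} therefore yields
$$M_{!_{r_1},!_{r_2},!_{r_3}}(A,B,C)\le M_{\sigma_1,\sigma_2,\sigma_3}(A,B,C)\le M_{\nabla_{r_1},\nabla_{r_2},\nabla_{r_3}}(A,B,C),$$
and substituting the formulas from Examples \ref{arithmetic} and \ref{harmonic} produces exactly the desired sandwich.

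No serious obstacle is expected. The only point requiring care is the coincidence of the probability vector across the three bounding triples: since $\varphi$ is defined in terms of the weights alone, the same $(p_1,p_2,p_3)$ governs all three ALM-limits, which is what allows the upper and lower bounds to be expressed with identical coefficients.
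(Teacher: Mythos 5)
Your proposal is correct and takes essentially the same route as the paper, whose proof likewise consists of citing Proposition \ref{mean order} together with the closed forms in Examples \ref{arithmetic} and \ref{harmonic}; you have simply made explicit the two facts the paper leaves implicit, namely the sandwich $!_{r_k}\le\sigma_k\le\nabla_{r_k}$ and the coincidence of the probability vector across the three triples. One small slip in your justification of the lower bound: the transpose $f_\sigma^\circ(t)=tf_\sigma(1/t)$ has weight $1-r$, not $r$, and applying the concavity estimate to it merely reproduces the upper bound; the inequality $!_r\le\sigma$ should instead be derived from the adjoint $t\mapsto 1/f_\sigma(1/t)$, which does have weight $r$, combined with the fact that taking adjoints reverses the order of operator means and sends $\nabla_r$ to $!_r$.
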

\begin{proof} 
This follows directly from 
from Proposition \ref{mean order} and Examples \ref{arithmetic}, \ref{harmonic}.
\end{proof}

The next result generalizes 
the following inequality for two-variable  operator means \cite{U2}:
if $0 < m \le A, B, A', B' \le M$, then  
$$
\|A \sigma B - A' \sigma B'\| \le {M\over m}\max\{\|A - A'\|, \|B - B'\|\}.
$$

\begin{proposition}
If \(0 < m \le A, A', B, B', C, C' \le M\), then
\begin{align*}
&\big\|
M_{\sigma_1,\sigma_2,\sigma_3}(A,B,C)
-
M_{\sigma_1,\sigma_2,\sigma_3}(A',B',C')
\big\| \\
&{\color{white}++++++++}\le \frac{M}{m} \max\{\|A - A'\|, \|B - B'\|, \|C - C'\|\}.
\end{align*}
\end{proposition}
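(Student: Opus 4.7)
The plan is to reduce the required norm bound to a single application of monotonicity and positive homogeneity of $M_{\sigma_1,\sigma_2,\sigma_3}$, bypassing any iterative use of the two-variable Lipschitz inequality (iterating the latter along the ALM-sequence would only give useless exponential growth). Put $\delta := \max\{\|A-A'\|,\|B-B'\|,\|C-C'\|\}$. Since $\|A-A'\|\le \delta$ and $A'\ge mI$, the estimate $A-A'\le \delta I\le \tfrac{\delta}{m}A'$ gives $A\le (1+\delta/m)A'$, and the analogous inequalities hold for $B$ and $C$.

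Before applying these, I would record that $M_{\sigma_1,\sigma_2,\sigma_3}$ is positively homogeneous, i.e.\ $M_{\sigma_1,\sigma_2,\sigma_3}(cA,cB,cC)=c\,M_{\sigma_1,\sigma_2,\sigma_3}(A,B,C)$ for all $c>0$. This is immediate from the ALM-construction: each two-variable operator mean satisfies $(cA)\,\sigma_k(cB)=c(A\,\sigma_k B)$, so the ALM-sequence from $(cA,cB,cC)$ with $(\sigma_1,\sigma_2,\sigma_3)$ is just $c$ times the ALM-sequence from $(A,B,C)$, and the weighted decreasing limit scales accordingly. Combined with monotonicity (condition (I)) this yields
\[
M_{\sigma_1,\sigma_2,\sigma_3}(A,B,C)\le M_{\sigma_1,\sigma_2,\sigma_3}\bigl((1+\tfrac{\delta}{m})A',\,(1+\tfrac{\delta}{m})B',\,(1+\tfrac{\delta}{m})C'\bigr)=\bigl(1+\tfrac{\delta}{m}\bigr)M_{\sigma_1,\sigma_2,\sigma_3}(A',B',C').
\]
Since $A',B',C'\le MI$, monotonicity together with the normalized condition and homogeneity give $M_{\sigma_1,\sigma_2,\sigma_3}(A',B',C')\le MI$, hence
\[
M_{\sigma_1,\sigma_2,\sigma_3}(A,B,C)-M_{\sigma_1,\sigma_2,\sigma_3}(A',B',C')\le \tfrac{\delta}{m}\,M_{\sigma_1,\sigma_2,\sigma_3}(A',B',C')\le \tfrac{M\delta}{m}\,I.
\]
Exchanging the roles of the primed and unprimed triples produces the reverse operator inequality; the two together give the stated norm bound.

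The only point requiring care is the observation, not previously recorded in the paper, that $M_{\sigma_1,\sigma_2,\sigma_3}$ is positively homogeneous. Once this is in hand, there is no real obstacle: the whole argument is the one-sided comparison $A\le(1+\delta/m)A'$ passed through a monotone, homogeneous, normalized map that is uniformly bounded above by $MI$ on the operator box $[mI,MI]^{3}$.
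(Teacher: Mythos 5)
Your proposal is correct and follows essentially the same route as the paper: the one-sided comparison $A\le(1+\delta/m)A'$ fed through monotonicity and positive homogeneity, the bound $M_{\sigma_1,\sigma_2,\sigma_3}(A',B',C')\le M$, and symmetrization. The only cosmetic difference is that you justify homogeneity via the ALM-construction, whereas it also follows at once from congruence invariance (II)$'$ with $S=\sqrt{c}\,I$.
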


\begin{proof}
Let \(\delta:= \max\{\|A - A'\|, \|B - B'\|, \|C - C'\|\}\). Then,
\[
A' - A \le \delta I \le (\delta/m)A,
\quad \text{so} \quad A' \le (1 + \delta/m)A.
\]
Similarly, \(B' \le (1 + \delta/m)B\) and \(C' \le (1 + \delta/m)C\). Hence,
\[
M_{\sigma_1,\sigma_2,\sigma_3}(A,B,C)
\le (1 + \delta/m) M_{\sigma_1,\sigma_2,\sigma_3}(A',B',C').
\]
It follows that
\begin{align*}
M_{\sigma_1,\sigma_2,\sigma_3}(A,B,C)
-
M_{\sigma_1,\sigma_2,\sigma_3}(A',B',C') 
&\le (\delta/m) M_{\sigma_1,\sigma_2,\sigma_3}(A',B',C') \\
&\le (\delta/m) M.
\end{align*}
A similar argument shows that
\[
M_{\sigma_1,\sigma_2,\sigma_3}(A',B',C')
-
M_{\sigma_1,\sigma_2,\sigma_3}(A,B,C)
\le (\delta/m) M.
\]
\end{proof}

\begin{corollary}\label{numerical continuity}
Let \(\{a(k)\}, \{b(k)\}, \{c(k)\}\) be sequences of positive numbers converging to \(a, b, c > 0\), respectively. Then
\[
M_{\sigma_1,\sigma_2,\sigma_3}(a(k), b(k), c(k)) \to M_{\sigma_1,\sigma_2,\sigma_3}(a, b, c).
\]
\end{corollary}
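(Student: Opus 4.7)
The plan is to reduce this to a direct application of the preceding proposition, since the Lipschitz-type bound established there is tailor-made for a continuity statement. The only wrinkle is that the preceding proposition is stated for operators $A,B,C$ satisfying $0 < m \le A,B,C \le M$, and here we have positive scalars. I will identify each positive scalar $t>0$ with the operator $tI$ on $\mathcal{H}$ (or, if one wishes, with the scalar operator on a one-dimensional Hilbert space); since $M_{\sigma_1,\sigma_2,\sigma_3}$ is well-defined on positive operators, this identification is harmless.

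First, I would observe that because $a(k) \to a > 0$, $b(k) \to b > 0$, $c(k) \to c > 0$, the sequences $\{a(k)\}$, $\{b(k)\}$, $\{c(k)\}$ are bounded above and, from some index onward, bounded below by a positive constant. Hence there exist constants $0 < m \le M$ such that
\[
m \le a(k),\, b(k),\, c(k),\, a,\, b,\, c \le M
\]
for all sufficiently large $k$. Regarded as scalar operators, these satisfy the hypothesis of the preceding proposition.

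Next, applying that proposition (with $A := a(k)I$, $A' := aI$, and similarly for $B,C$), I obtain
\[
\bigl| M_{\sigma_1,\sigma_2,\sigma_3}(a(k),b(k),c(k)) - M_{\sigma_1,\sigma_2,\sigma_3}(a,b,c)\bigr| \le \frac{M}{m} \max\{|a(k)-a|,\, |b(k)-b|,\, |c(k)-c|\}.
\]
Since each of $|a(k)-a|, |b(k)-b|, |c(k)-c|$ tends to $0$ as $k \to \infty$, the right-hand side tends to $0$, which yields the claimed convergence.

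There is essentially no obstacle in this proof; it is a direct corollary. The only point worth being explicit about is the identification of positive scalars with scalar operators, which ensures the preceding proposition actually applies to the present setting.
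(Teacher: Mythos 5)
Your proof is correct and follows the route the paper intends: the corollary is stated without proof precisely because it is an immediate application of the preceding Lipschitz-type proposition, with scalars viewed as scalar operators and uniform bounds $0<m\le M$ extracted from the convergence $a(k)\to a>0$, etc. Nothing further is needed.
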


\begin{proposition} \label{joint homo}
Let $(p_1,p_2,p_3)$ be the probability vector corresponding to 
$(\#_{r_1},\#_{r_2}, \#_{r_3})$. Then 
\[
M_{\#_{r_1},\#_{r_2},\#_{r_3}}(\alpha A, \beta B, \gamma C)
=
\alpha^{p_1} \beta^{p_2} \gamma^{p_3}
M_{\#_{r_1},\#_{r_2},\#_{r_3}}(A, B, C)
\]
holds for all $\alpha, \beta, \gamma > 0$ and for all $A, B, C \ge 0$.  
\end{proposition}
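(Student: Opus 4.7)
The plan is to exploit the scaling identity for the two-variable geometric mean, namely
\[
(\alpha X)\#_r(\beta Y) = \alpha^{1-r}\beta^{r}\,(X\#_r Y) \qquad (\alpha,\beta>0,\ X,Y\ge 0),
\]
which follows immediately from $f_{\#_r}(t)=t^r$ when $X>0$ and extends to $X,Y\ge 0$ by downward continuity of $\#_r$. This will let us peel off the scalars $(\alpha,\beta,\gamma)$ from the ALM-sequence and convert the multiplicative scaling recursion into an additive one, to which Lemma \ref{lemma1} applies.

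First I would let $(A_n',B_n',C_n')$ be the ALM-sequence from $(A,B,C)$ with $(\#_{r_1},\#_{r_2},\#_{r_3})$, and let $(A_n,B_n,C_n)$ be the ALM-sequence from $(\alpha A,\beta B,\gamma C)$ with the same triple of means. I would then prove by induction that there exist positive scalars $\alpha_n,\beta_n,\gamma_n$ such that
\[
(A_n,B_n,C_n) = (\alpha_n A_n',\ \beta_n B_n',\ \gamma_n C_n'),
\]
with $(\alpha_0,\beta_0,\gamma_0)=(\alpha,\beta,\gamma)$ and, by the scaling identity,
\[
\alpha_{n+1}=\beta_n^{1-r_1}\gamma_n^{r_1},\quad
\beta_{n+1}=\gamma_n^{1-r_2}\alpha_n^{r_2},\quad
\gamma_{n+1}=\alpha_n^{1-r_3}\beta_n^{r_3}.
\]

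Taking logarithms, the triple $(\log\alpha_n,\log\beta_n,\log\gamma_n)$ satisfies exactly the arithmetic ALM recursion with weights $(r_1,r_2,r_3)$ and initial data $(\log\alpha,\log\beta,\log\gamma)$. Applying Lemma \ref{lemma1} (equivalently, the Perron–Frobenius statement in its proof, which is a purely scalar fact here), each of $\log\alpha_n,\log\beta_n,\log\gamma_n$ converges to $p_1\log\alpha+p_2\log\beta+p_3\log\gamma$, so
\[
\alpha_n,\beta_n,\gamma_n \ \longrightarrow\ \alpha^{p_1}\beta^{p_2}\gamma^{p_3}.
\]
Combined with the weak convergence $A_n'\to M_{\#_{r_1},\#_{r_2},\#_{r_3}}(A,B,C)$ granted by Theorem \ref{main theorem}, this gives weak convergence $A_n\to \alpha^{p_1}\beta^{p_2}\gamma^{p_3}\,M_{\#_{r_1},\#_{r_2},\#_{r_3}}(A,B,C)$, which is the claimed identity.

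There is no real obstacle; the only delicate point is justifying the scaling identity for non-invertible $A,B,C$, which I handle by replacing $A,B,C$ with $A+\epsilon I,B+\epsilon I,C+\epsilon I$, applying the identity there (where everything is invertible and the formula $X\#_r Y = X^{1/2}(X^{-1/2}YX^{-1/2})^r X^{1/2}$ makes the scaling transparent), and letting $\epsilon\downarrow 0$ using downward continuity of $\#_r$ and of $M_{\#_{r_1},\#_{r_2},\#_{r_3}}$ (Proposition \ref{downward}).
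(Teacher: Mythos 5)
Your proposal is correct and follows essentially the same route as the paper: the authors likewise peel off the scalars via $(\beta B)\#_{r_1}(\gamma C)=\beta^{1-r_1}\gamma^{r_1}(B\#_{r_1}C)$, iterate to track the exponents of $\alpha,\beta,\gamma$ along the ALM-sequence, and conclude that these exponents converge to $(p_1,p_2,p_3)$. Your logarithm/Perron--Frobenius formulation just makes explicit the step the paper compresses into ``iterating this,'' and your $\epsilon$-regularization for non-invertible operators is a reasonable (if standard) extra justification.
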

\begin{proof}
Set $(A', B', C'):= (\alpha A, \beta B, \gamma C)$. 
{\color{black}
We denote by \((A_n, B_n, C_n)\) the ALM-sequence from \((A, B, C)\), 
and by \(([A']_n, [B']_n, [C']_n)\) the one from \((A', B', C')\). 
}
Now, we have 
\[
[A']_1 = B' \#_{r_1} C' = \beta^{1 - r_1} \gamma^{r_1} (B \#_{r_1} C) = \beta^{1 - r_1} \gamma^{r_1} A_1,
\]
and  
\[
[B']_1 = \gamma^{1 - r_2} \alpha^{r_2} B_1, \quad 
[C']_1 = \alpha^{1 - r_3} \beta^{r_3} C_1.
\]
Hence,
\begin{align*}
[A']_2 &= [B']_1 \#_{r_1} [C']_1 
= \left( \gamma^{1 - r_2} \alpha^{r_2} \right)^{1 - r_1} \left( \alpha^{1 - r_3} \beta^{r_3} \right)^{r_1} A_2 \\
&= \alpha^{r_2(1 - r_1) + (1 - r_3) r_1} 
\beta^{r_3 r_1} 
\gamma^{(1 - r_2)(1 - r_1)} A_2.
\end{align*}
Iterating this, for every $v\in {\cal H}$, 
\[
\lim_n \langle [A']_n x | x \rangle 
= \alpha^{p_1} \beta^{p_2} \gamma^{p_3} \lim_n \langle A_n x | x \rangle,
\]
which completes the proof.
\end{proof}
Following \cite{T}, 
we recall the definition of the Thompson metric \( d_T(A,B) \) for positive invertible operators \( A \) and \( B \), which is given by
\begin{align*}
d_T(A,B) &:=\log \max\left\{ \ \inf \{ \lambda>0~|~ A\le \lambda B \},\  
\inf \{ \mu>0~|~ B\le \mu A \}\ \right\} \\
&= \| \log (A^{-1/2} B A^{-1/2}) \|.
\end{align*}

\begin{corollary}Let $(p_1,p_2,p_3)$ be the probability vector corresponding to 
$(\#_{r_1},\#_{r_2},\#_{r_3})$. Then 
\begin{align*}
d_T(
M_{\#_{r_1},\#_{r_2},\#_{r_3}}(&A,B,C),
M_{\#_{r_1},\#_{r_2},\#_{r_3}}(A',B',C')
) \\
&\le p_1 d_T(A,A')+
p_2  d_T(B,B')+
p_3 d_T(C,C'),
\end{align*}
holds for all $A,B,C,A',B',C'>0$. 
\end{corollary}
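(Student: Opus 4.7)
The plan is to reduce the bound to a pair of one-sided operator inequalities, using the joint homogeneity established in Proposition \ref{joint homo} together with monotonicity of $M_{\#_{r_1},\#_{r_2},\#_{r_3}}$. Recall that the Thompson metric has the equivalent characterization
\[
d_T(X,Y) = \inf\{\lambda \ge 0 \mid e^{-\lambda} Y \le X \le e^{\lambda} Y\},
\]
so in order to bound $d_T(M_{\#}(A,B,C),M_{\#}(A',B',C'))$ from above by some quantity $\Lambda$, it suffices to establish both
$$M_{\#_{r_1},\#_{r_2},\#_{r_3}}(A,B,C) \le e^{\Lambda} M_{\#_{r_1},\#_{r_2},\#_{r_3}}(A',B',C')$$
and the symmetric inequality with $(A,B,C)$ and $(A',B',C')$ swapped. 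Here $M_{\#}$ abbreviates $M_{\#_{r_1},\#_{r_2},\#_{r_3}}$.

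First I would set $\delta_A := d_T(A,A')$, $\delta_B := d_T(B,B')$, $\delta_C := d_T(C,C')$. By the definition of the Thompson metric, these give operator inequalities $A \le e^{\delta_A} A'$, $B \le e^{\delta_B} B'$, $C \le e^{\delta_C} C'$ (and the reverse inequalities). Next, applying monotonicity of $M_{\#}$ (which is inherited from each $\#_{r_k}$ through the ALM-procedure and was listed among the clear properties of $M_{\sigma_1,\sigma_2,\sigma_3}$), I obtain
\[
M_{\#}(A,B,C) \le M_{\#}(e^{\delta_A} A', e^{\delta_B} B', e^{\delta_C} C').
\]

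Then I invoke Proposition \ref{joint homo} with $\alpha = e^{\delta_A}$, $\beta = e^{\delta_B}$, $\gamma = e^{\delta_C}$ (this is the critical input, and the one that makes the weights $p_j$ appear), which yields
\[
M_{\#}(e^{\delta_A} A', e^{\delta_B} B', e^{\delta_C} C') = e^{p_1 \delta_A + p_2 \delta_B + p_3 \delta_C} M_{\#}(A',B',C').
\]
Combining the two displays gives one direction of the required operator inequality with $\Lambda = p_1 \delta_A + p_2 \delta_B + p_3 \delta_C$; the reverse direction follows by interchanging $(A,B,C)$ with $(A',B',C')$ and running exactly the same argument. Taking the logarithm and using the characterization of $d_T$ recalled above completes the proof.

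I do not anticipate a serious obstacle: the proof is essentially a repackaging of Proposition \ref{joint homo} through the Thompson metric, and the only thing to be careful about is that $\#_{r_k}$ (hence $M_{\#}$) respects the scalar homogeneity in precisely the form needed to produce the convex combination $p_1 \delta_A + p_2 \delta_B + p_3 \delta_C$. Because all six operators are positive invertible, the Thompson distances are finite and the manipulations above are well defined.
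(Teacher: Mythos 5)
Your proposal is correct and follows essentially the same route as the paper: both arguments combine monotonicity of $M_{\#_{r_1},\#_{r_2},\#_{r_3}}$ with the joint homogeneity of Proposition \ref{joint homo} to get the two one-sided operator inequalities, the only cosmetic difference being that the paper scales by the spectral radii $\rho(A'^{-1}A)$, etc., while you scale by $e^{d_T(A,A')}$, which dominates them.
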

\begin{proof} 
We denote the spectral radius of an operator \( X \) by \( \rho(X) \).  
By the preceding proposition, we have
\begin{align*}
M_{\#_{r_1},\#_{r_2},\#_{r_3}}(&A,B,C)  \\
&\le M_{\#_{r_1},\#_{r_2},\#_{r_3}}(\rho(A'^{-1}A)A',\rho(B'^{-1}B)B', \rho(C'^{-1}C)C') \\
&= \rho(A'^{-1}A)^{p_1}\rho(B'^{-1}B)^{p_2} \rho(C'^{-1}C)^{p_3}M_{\#_{r_1},\#_{r_2},\#_{r_3}}(A',B',C').
\end{align*}
Similarly,
\begin{align*}
M_{\#_{r_1},\#_{r_2},\#_{r_3}}(&A',B',C') \\ 
&\le \rho(A^{-1}A')^{p_1}\rho(B^{-1}B')^{p_2} \rho(C^{-1}C')^{p_3}M_{\#_{r_1},\#_{r_2},\#_{r_3}}(A,B,C).
\end{align*}
Therefore,
\begin{align*}
d_T\big(M_{\#_{r_1},\#_{r_2},\#_{r_3}}(A,&B,C),\ M_{\#_{r_1},\#_{r_2},\#_{r_3}}(A',B',C')\big) \\
&\le p_1\, d_T(A,A') + p_2\, d_T(B,B') + p_3\, d_T(C,C').
\end{align*}
\end{proof}


As mentioned earlier, the map from a given triple of means to a probability vector in the construction of \( M_{\sigma_1, \sigma_2, \sigma_3} \) is not injective.  
For example, the triple \( (\#, \#, \#) \) corresponds to the probability vector \( \left( \tfrac{1}{3}, \tfrac{1}{3}, \tfrac{1}{3} \right) \),  
and so does the triple \( \left( \#_{2/3}, \#_{2/3}, \#_{2/3} \right) \).  
Therefore, for commuting operators \( A, B, C \), we have
\[
M_{\#, \#, \#}(A, B, C) = M_{\#_{2/3}, \#_{2/3}, \#_{2/3}}(A, B, C) = (ABC)^{1/3}.
\]
The following inequality is intended to quantify the difference between these two means in the general (non-commuting) case.

\begin{proposition}\label{distance}
Let $A,B,C>0$, and suppose that  
$(p_1,p_2,p_3)$ is a probability vector corresponding to both 
$(\#_{r_1}, \#_{r_2}, \#_{r_3})$  and $(\#_{r'_1}, \#_{r'_2}, \#_{r'_3})$. 
Then the following inequality holds:
\begin{align*}
d_T(M_{\#_{r_1}, \#_{r_2}, \#_{r_3}}(A,B,C),\ &M_{\#_{r'_1}, \#_{r'_2}, \#_{r'_3}}(A,B,C))\\
&\le 
K 
(p_1 d_T(B,C) + p_2 d_T(C,A) + p_3 d_T(A,B))
,
\end{align*}
where $K=\max\{|r_1-r_1'|,|r_2-r_2'|,|r_3-r_3'|\}$.
\end{proposition}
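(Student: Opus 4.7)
The plan is to exploit a key consequence of the hypothesis that the two triples produce the same probability vector: subtracting the Perron relations $p\Gamma = p = p\Gamma'$ (for $\Gamma, \Gamma'$ as in the proof of Lemma \ref{lemma1}) shows that $p_i(r_i - r'_i)$ is independent of $i$; I would denote this common value by $\eta$. Then $|\eta| = K\cdot\min_i p_i$, and consequently
\[
|\eta|\bigl(d_T(B,C) + d_T(C,A) + d_T(A,B)\bigr) \le K\bigl(p_1 d_T(B,C) + p_2 d_T(C,A) + p_3 d_T(A,B)\bigr).
\]
This identity is what will allow the weighted sum $\sum_i p_i|r_i - r'_i|\,d_T(\cdot,\cdot)$ to be reassembled into the right-hand side of the statement.

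Write $M := M_{\#_{r_1},\#_{r_2},\#_{r_3}}(A,B,C)$ and $M' := M_{\#_{r'_1},\#_{r'_2},\#_{r'_3}}(A,B,C)$, and let $(A_n, B_n, C_n)$ and $(A'_n, B'_n, C'_n)$ be the two ALM-sequences from $(A,B,C)$. The one-step estimate combines three ingredients: the identity $d_T(X\#_r Y,\, X\#_s Y) = |r-s|\,d_T(X,Y)$ (which follows from congruence invariance of $d_T$ together with the functional calculus for $X^{-1/2}YX^{-1/2}$), giving $A_1 \le e^{|r_1-r'_1|d_T(B,C)}A'_1$ and its analogs; monotonicity of $M$; and the joint homogeneity from Proposition \ref{joint homo}, which collapses the resulting scalar prefactors. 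Combined with the ALM fixed-point identity $M(A_1, B_1, C_1) = M$, these yield
\[
M \le e^{|\eta|(d_T(B,C)+d_T(C,A)+d_T(A,B))}\,M(A'_1, B'_1, C'_1).
\]

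I would then iterate: the same one-step argument applied with $(A'_k, B'_k, C'_k)$ in place of $(A, B, C)$ gives $M(A'_k, B'_k, C'_k) \le e^{|\eta|\Sigma'_k}\,M(A'_{k+1}, B'_{k+1}, C'_{k+1})$, where $\Sigma'_k := d_T(B'_k, C'_k) + d_T(C'_k, A'_k) + d_T(A'_k, B'_k)$. Telescoping and passing to the limit via continuity of $M$ in the Thompson metric (from the preceding corollary), together with $(A'_n, B'_n, C'_n) \to M'$, produces $M \le e^{|\eta|\sum_{k\ge 0}\Sigma'_k}\,M'$. A symmetric argument swapping the roles of the two triples gives the reverse bound, hence $d_T(M, M') \le |\eta|\sum_{k\ge 0}\Sigma'_k$.

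The main obstacle is then to show that $|\eta|\sum_{k\ge 0}\Sigma'_k \le K(p_1 d_T(B,C)+p_2 d_T(C,A)+p_3 d_T(A,B))$. The $k=0$ contribution already matches the right-hand side via the $\eta$-identity, so the task is to absorb the tail $k\ge 1$ into the one-step bound rather than merely stack it on. I expect this to require extracting a genuine contraction estimate for $\Sigma'_k$ from the non-Perron spectrum of $\Gamma'$, used in tandem with $|\eta| = K\min_i p_i$; an alternative route is to track the weighted Thompson distance $\Phi_n := p_1 d_T(A_n, A'_n) + p_2 d_T(B_n, B'_n) + p_3 d_T(C_n, C'_n)$ directly, and show that despite the per-step increment $\Phi_{n+1} \le \Phi_n + |\eta|\Sigma'_n$, the simultaneous contraction of $\Sigma'_n$ under the ALM map keeps $\Phi_\infty$ below the initial one-step bound.
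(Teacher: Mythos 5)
Your reduction of the hypothesis to the identity $p_1(r_1-r_1')=p_2(r_2-r_2')=p_3(r_3-r_3')=:\eta$ is correct and is a clean reformulation of the estimate the paper uses at the very end ($\sum_i p_i|r_i-r_i'|\,d_T(\cdot,\cdot)\le K\sum_i p_i\,d_T(\cdot,\cdot)$), and your one-step bound $M\le e^{|\eta|\Sigma'_0}\,M(A'_1,B'_1,C'_1)$ via monotonicity, Proposition~\ref{joint homo} and the fixed-point identity is sound. But the proof is not complete, and you say so yourself: your telescoping restarts the comparison from $(A'_k,B'_k,C'_k)$ at each stage and therefore accrues a fresh factor $e^{|\eta|\Sigma'_k}$ every time, so the best your bookkeeping can deliver is $d_T(M,M')\le|\eta|\sum_{k\ge0}\Sigma'_k$. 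The $k=0$ term alone already saturates the right-hand side of the proposition, and $\Sigma'_k>0$ for every $k$ unless $A=B=C$, so the tail cannot be absorbed; a geometric contraction estimate for $\Sigma'_k$ would only make the tail finite, not zero. (A secondary issue: the limit passage $M(A'_k,B'_k,C'_k)\to M'$ needs more than the weak convergence of $A'_k$ that the paper establishes, and more than the scalar continuity in Corollary~\ref{numerical continuity}.)

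The paper takes a different bookkeeping that avoids the accumulation: it compares the two ALM-sequences termwise, proving $A_n\le a_{n-1}A'_n$, $B_n\le b_{n-1}B'_n$, $C_n\le c_{n-1}C'_n$, where $(a_n,b_n,c_n)$ is the \emph{scalar} ALM-sequence from $\bigl(R(B,C)^{|r_1-r_1'|},R(C,A)^{|r_2-r_2'|},R(A,B)^{|r_3-r_3'|}\bigr)$ with $(\#_{r_1},\#_{r_2},\#_{r_3})$. Since that scalar iteration converges to $a^{p_1}b^{p_2}c^{p_3}$, the error constants are \emph{averaged} in the log scale rather than summed, and the limit is exactly $\exp\bigl(\sum_i p_i|r_i-r_i'|\,d_T(\cdot,\cdot)\bigr)$, whence the stated bound. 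You should be aware, though, that the difficulty you isolated does not vanish there either: the induction step $A_{n+1}=B_n\#_{r_1}C_n\le(b_{n-1}B'_n)\#_{r_1}(c_{n-1}C'_n)=(b_{n-1}\#_{r_1}c_{n-1})(B'_n\#_{r_1}C'_n)$ produces $B'_n\#_{r_1}C'_n$ and not $A'_{n+1}=B'_n\#_{r_1'}C'_n$, so an extra factor $R(B'_n,C'_n)^{|r_1-r_1'|}$ --- precisely the quantity tracked by your $\Sigma'_n$ --- still has to be handled; the paper's proof passes over this silently. In short, your proposal correctly locates the crux of the argument but does not close it, and as written it cannot yield an inequality as strong as the one claimed.
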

The following lemma will be needed.  
Here, \( \rho(X) \) denotes the spectral radius of an invertible operator \( X \), and  
\[
R(X,Y) := \max\left\{ \rho(X^{-1}Y),\ \rho(XY^{-1}) \right\}.
\]
\begin{lemma} Let $X,Y>0$. Then
$$R(X\#_{r} Y, X\#_{s} Y)\le R(X,Y)^{|r-s|}.$$
\end{lemma}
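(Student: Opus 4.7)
The plan is to reduce everything to the spectrum of the single positive operator $T := X^{-1/2}YX^{-1/2}$, using the Kubo–Ando formula $X\#_r Y = X^{1/2}T^r X^{1/2}$. The key observation is that $X^{-1}Y = X^{-1/2}\,T\,X^{1/2}$ is similar to $T$, and analogously $XY^{-1}$ is similar to $T^{-1}$. Hence
\[
R(X,Y) \;=\; \max\{\rho(T),\ \rho(T^{-1})\},
\]
so the problem becomes one about powers of the single positive operator $T$.

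Next I would compute $(X\#_r Y)^{-1}(X\#_s Y)$ directly from the formula above. Since $(X\#_r Y)^{-1} = X^{-1/2}T^{-r}X^{-1/2}$, a straightforward cancellation gives
\[
(X\#_r Y)^{-1}(X\#_s Y) \;=\; X^{-1/2}\, T^{s-r}\, X^{1/2},
\]
which is similar to $T^{s-r}$ and therefore has spectral radius $\rho(T^{s-r})$. By the same computation with the roles of $r$ and $s$ swapped, $\rho\bigl((X\#_s Y)^{-1}(X\#_r Y)\bigr) = \rho(T^{r-s})$.

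The final step is purely spectral: if the eigenvalues of $T$ are $\{\lambda_i\}$ (all strictly positive), then $\rho(T^{\pm(s-r)}) = \max_i \lambda_i^{\pm(s-r)}$. Taking $|r-s|$ inside the $\max$, this is either $\rho(T)^{|r-s|}$ or $\rho(T^{-1})^{|r-s|}$, depending on the sign of $s-r$. Either way,
\[
\rho(T^{\pm(s-r)}) \;\le\; \bigl(\max\{\rho(T),\rho(T^{-1})\}\bigr)^{|r-s|} \;=\; R(X,Y)^{|r-s|},
\]
and taking the maximum over both orientations yields the desired inequality (in fact, equality holds).

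There is essentially no obstacle here; the only small point requiring care is the passage between $\rho(X^{-1}Y)$ and $\rho(T)$, which requires noting that these operators are similar through $X^{-1/2}$. Everything else is a direct computation once the formula $X\#_r Y = X^{1/2}T^r X^{1/2}$ is unwound.
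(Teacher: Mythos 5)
Your proof is correct and rests on the same underlying reduction the paper uses: via the Kubo--Ando formula everything comes down to comparing $T^{r}$ and $T^{s}$ for $T=X^{-1/2}YX^{-1/2}$. The paper compresses this into the single operator inequality $X\#_{r}Y\le \rho(X^{-1}Y)^{\,r-s}\,(X\#_{s}Y)$ (for $r\ge s$), from which the bound on $R$ follows; you instead compute $(X\#_{r}Y)^{-1}(X\#_{s}Y)$ explicitly as an operator similar to $T^{s-r}$ and read off its spectral radius, which is marginally more work but also shows that the stated inequality is in fact an equality. One cosmetic point: since the paper works on a general Hilbert space you should speak of the spectrum of $T$ rather than its eigenvalues and invoke the spectral mapping theorem $\sigma(T^{u})=\{\lambda^{u}:\lambda\in\sigma(T)\}$; the argument is otherwise unchanged.
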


\begin{proof}It is sufficient to assume $r\ge s$. It is clear that 
$$X\#_{r} Y\le \rho(X^{-1}Y)^{r-s} X\#_{s} Y,$$
which implies the disired result.
\end{proof}
\begin{proof}[Proof of Proposition \ref{distance}]
Let \( (A_n, B_n, C_n) \) and \( (A'_n, B'_n, C'_n) \) be the ALM-sequences from \( (A, B, C) \) with 
\( (\#_{r_1}, \#_{r_2}, \#_{r_3}) \) and \( (\#_{r'_1}, \#_{r'_2}, \#_{r'_3}) \), respectively.  
From the above lemma, we have
\[
A_1 = B \#_{r_1} C \le R(B, C)^{|r_1 - r_1'|} (B \#_{r_1'} C) = R(B, C)^{|r_1 - r_1'|} A'_1.
\]
Similarly,
\[
B_1 \le R(C, A)^{|r_2 - r_2'|} B'_1, \quad
C_1 \le R(A, B)^{|r_3 - r_3'|} C'_1.
\]
To simplify the notation, let
\[
a := R(B, C)^{|r_1 - r_1'|}, \quad
b := R(C, A)^{|r_2 - r_2'|}, \quad
c := R(A, B)^{|r_3 - r_3'|}.
\]
Then,
\[
A_2 \le (b \#_{r_1} c) A'_2 =: a_1 A'_2, \quad
B_2 \le b_1 B'_2, \quad
C_2 \le c_1 C'_2.
\]
By repeating this process, we obtain
\[
A_n \le a_{n-1} A'_n, \quad
B_n \le b_{n-1} B'_n, \quad
C_n \le c_{n-1} C'_n, 
\]
where $(a_n,b_n,c_n)$ is the ALM-sequence from $(a,b,c)$ with 
$(\#_{r_1}, \#_{r_2}, \#_{r_3})$. 
Taking the limit yields
\[
M_{\#_{r_1}, \#_{r_2}, \#_{r_3}}(A, B, C)
\le (a^{p_1} b^{p_2} c^{p_3}) M_{\#_{r'_1}, \#_{r'_2}, \#_{r'_3}}(A, B, C).
\]
Therefore,
\begin{align*}
&d_T(M_{\#_{r_1}, \#_{r_2}, \#_{r_3}}(A, B, C), M_{\#_{r'_1}, \#_{r'_2}, \#_{r'_3}}(A, B, C)) \\
&\le \max\{|r_1 - r_1'|, |r_2 - r_2'|, |r_3 - r_3'|\} \cdot (p_1 d(B, C) + p_2 d(C, A) + p_3 d(A, B)).
\end{align*}

\end{proof}

\section{Generalization }
\subsection{Strict concavity}
Let \((\sigma_1, \sigma_2, \sigma_3)\) 
be a triple of non-trivial operator means, 
at most one of which is arithmetic.
The next result and its corollary are 
counterparts of Proposition \ref{strict concavity1}.  

\begin{proposition}\label{strict-concavity}
Let \((p_1, p_2, p_3)\) be the probability vector corresponding to \((\sigma_1, \sigma_2, \sigma_3)\) 
and let $A,B,C\in B({\cal H})_+$ and $v \in {\cal H}$.    
If 
$$\langle (M_{\sigma_1,\sigma_2,\sigma_3}(A,B,C) )v~|~v\rangle 
=
\langle (p_1 A + p_2 B + p_3 C) v ~|~v\rangle, $$
then
$\langle  A  v ~|~v\rangle =\langle  B  v ~|~v\rangle=\langle  C  v ~|~v\rangle$.
\end{proposition}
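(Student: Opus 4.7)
The plan is to exploit the fact that the sequence $S_n := p_1 A_n + p_2 B_n + p_3 C_n$ produced by Theorem \ref{main theorem} is monotonically decreasing to $M := M_{\sigma_1,\sigma_2,\sigma_3}(A,B,C)$, so that the quadratic-form hypothesis $\langle M v \mid v \rangle = \langle (p_1 A + p_2 B + p_3 C) v \mid v \rangle = \langle S_0 v \mid v \rangle$ forces the entire chain
$$\langle S_0 v \mid v \rangle \ge \langle S_1 v \mid v \rangle \ge \cdots \ge \langle M v \mid v \rangle$$
to collapse to a single value. In particular, $\langle S_0 v \mid v \rangle = \langle S_1 v \mid v \rangle$; this single equality will be the foothold for the whole argument.

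Next I would estimate $\langle S_1 v \mid v \rangle$ from above. Writing $\alpha := \langle A v \mid v \rangle$, $\beta := \langle B v \mid v \rangle$, $\gamma := \langle C v \mid v \rangle$, and letting $r_k$ denote the weight of $\sigma_k$, Proposition \ref{transfer2} gives
$$\langle S_1 v \mid v \rangle \le p_1 (\beta \sigma_1 \gamma) + p_2 (\gamma \sigma_2 \alpha) + p_3 (\alpha \sigma_3 \beta).$$
Applying the arithmetic upper bound $\xi \sigma_k \eta \le (1-r_k)\xi + r_k \eta$ on each term and regrouping by $\alpha, \beta, \gamma$, the right-hand side becomes $\alpha[p_2 r_2 + p_3(1-r_3)] + \beta[p_1(1-r_1) + p_3 r_3] + \gamma[p_1 r_1 + p_2(1-r_2)]$. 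By the defining relation $p\Gamma = p$ of the probability vector $(p_1,p_2,p_3)$ recorded in (\ref{linear}), this simplifies exactly to $p_1\alpha + p_2\beta + p_3\gamma = \langle S_0 v \mid v \rangle$. Since the chain closes with equality, every inequality along the way must in fact be an equality.

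Reading off the equality cases of the three arithmetic dominations, I obtain
$$\beta \sigma_1 \gamma = (1-r_1)\beta + r_1 \gamma, \qquad \gamma \sigma_2 \alpha = (1-r_2)\gamma + r_2 \alpha, \qquad \alpha \sigma_3 \beta = (1-r_3)\alpha + r_3 \beta.$$
By hypothesis, at most one of the $\sigma_k$ is arithmetic, so by the Remark following Proposition \ref{strict concavity1} at least two of them are strictly concave. For each strictly concave $\sigma_k$, Proposition \ref{strict concavity1} forces the corresponding pair of scalar arguments to coincide, giving at least two of the equalities $\beta = \gamma$, $\gamma = \alpha$, $\alpha = \beta$; any two of these yield $\alpha = \beta = \gamma$, which is the desired conclusion.

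The non-routine step I expect is the recognition that the sandwich closes \emph{exactly} when one uses the Perron probability vector $p$ determined by $p\Gamma = p$: for any other convex combination the upper bound would differ from $\langle S_0 v \mid v \rangle$ and the equality-forcing mechanism would fail. Once that is in place, everything else reduces to a direct invocation of Propositions \ref{transfer2} and \ref{strict concavity1}.
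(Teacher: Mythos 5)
Your proposal is correct and follows essentially the same route as the paper's proof: both use the monotone convergence $p_1A_n+p_2B_n+p_3C_n\downarrow M_{\sigma_1,\sigma_2,\sigma_3}(A,B,C)$ to force equality in the chain obtained from Proposition~\ref{transfer2}, the arithmetic upper bounds $\xi\sigma_k\eta\le(1-r_k)\xi+r_k\eta$, and the relation $p\Gamma=p$, then invoke Proposition~\ref{strict concavity1} for the (at least) two strictly concave means. Your write-up is, if anything, slightly more explicit than the paper's about why the hypothesis collapses the whole decreasing chain at the level $n=0$.
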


\begin{proof} 
We denote by \((A_n, B_n, C_n)\) the ALM-sequence.  
It suffices to assume that \(\sigma_1\) and \(\sigma_2\) are strictly concave  
and \(v \neq 0\).  
By Proposition~\ref{transfer2}, we have
\begin{align*}
&\langle (p_1A_{n+1}+p_2B_{n+1}+p_3C_{n+1})v | v \rangle \\
&= p_1\langle A_{n+1}v | v \rangle + p_2\langle B_{n+1}v | v \rangle + p_3\langle C_{n+1}v | v \rangle \\
&\le p_1\big( \langle B_n v | v \rangle \,\sigma_1\, \langle C_n v | v \rangle \big)  + p_2\big( \langle C_n v | v \rangle \,\sigma_2\, \langle A_n v | v \rangle \big) \\
&\qquad \qquad \qquad\qquad \qquad \qquad+ p_3\big( \langle A_n v | v \rangle \,\sigma_3\, \langle B_n v | v \rangle \big) \\
&\le p_1(1-r_1)\langle B_n v | v \rangle + p_1 r_1 \langle C_n v | v \rangle 
+ p_2(1 - r_2) \langle C_n v | v \rangle + p_2 r_2 \langle A_n v | v \rangle \\
&\qquad \qquad \qquad\qquad \qquad \qquad\qquad+ p_3(1 - r_3) \langle A_n v | v \rangle + p_3 r_3 \langle B_n v | v \rangle \\
&= p_1\langle A_n v | v \rangle + p_2\langle B_n v | v \rangle + p_3\langle C_n v | v \rangle.
\end{align*}
Let \(\alpha:= \langle A_n v | v \rangle\), \(\beta:= \langle B_n v | v \rangle\), and \(\gamma:= \langle C_n v | v \rangle\).  
Passing to the limit in each term of the above inequalities, we obtain
\[
p_1\alpha + p_2\beta + p_3\gamma \le 
p_1(\beta \,\sigma_1\, \gamma) + p_2(\gamma \,\sigma_2\, \alpha) + p_3(\alpha \,\sigma_3\, \beta)
\le p_1\alpha + p_2\beta + p_3\gamma,
\]
which implies
\[
\beta \,\sigma_1\, \gamma = (1 - r_1)\beta + r_1\gamma, \quad
\gamma \,\sigma_2\, \alpha = (1 - r_2)\gamma + r_2\alpha.
\]
By Proposition~\ref{strict concavity1}, it follows that \(\alpha = \beta = \gamma\).
\end{proof}
\begin{corollary}\label{strict-concavity2}
If scalars $a,b,c\ge 0$ satisfy 
$$M_{\sigma_1,\sigma_2,\sigma_3}(a,b,c) 
=
p_1 a + p_2 b + p_3 c ,$$
then 
$a=b=c$.
\end{corollary}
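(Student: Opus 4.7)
The plan is to derive this immediately from Proposition \ref{strict-concavity} by embedding the scalar statement into the operator setting on a one-dimensional Hilbert space. Concretely, I would take $\mathcal{H} = \mathbb{C}$, view $a, b, c$ as positive operators $A, B, C$ on $\mathcal{H}$, and pick the unit vector $v = 1$. Under this identification, $\langle X v \,|\, v \rangle$ coincides with the scalar $X$ for every such $X$.

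The first step is to note that the ALM-sequence starting from $(A, B, C)$ remains a triple of nonnegative scalars at every stage, since any two-variable operator mean restricted to $B(\mathbb{C})_+ \cong [0,\infty)$ is simply an ordinary mean on the nonnegative reals. Hence the weak limit $M_{\sigma_1,\sigma_2,\sigma_3}(A, B, C)$ is itself a nonnegative scalar, and equals $M_{\sigma_1,\sigma_2,\sigma_3}(a, b, c)$. The hypothesis of the corollary then reads
$$\langle M_{\sigma_1,\sigma_2,\sigma_3}(A, B, C) v \,|\, v \rangle = \langle (p_1 A + p_2 B + p_3 C) v \,|\, v \rangle,$$
which is exactly the hypothesis required by Proposition \ref{strict-concavity}.

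Applying that proposition immediately yields $\langle A v \,|\, v \rangle = \langle B v \,|\, v \rangle = \langle C v \,|\, v \rangle$, that is, $a = b = c$. There is no real obstacle here, since all the substantive work has been done in Proposition \ref{strict-concavity}. The only point to be aware of is that the standing assumption of this subsection — at most one of $\sigma_1, \sigma_2, \sigma_3$ is arithmetic, so at least two are strictly concave — is precisely what Proposition \ref{strict-concavity} needs, and it carries over without modification.
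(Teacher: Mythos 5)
Your proposal is correct and matches the paper's (implicit) argument: the paper states this corollary without proof precisely because it is the scalar specialization of Proposition~\ref{strict-concavity}, obtained by viewing $a,b,c$ as positive operators on a one-dimensional space (or as $aI,bI,cI$ with any unit vector $v$), under which the ALM-sequence stays scalar and the hypotheses coincide. Your remark that the standing assumption ``at most one of $\sigma_1,\sigma_2,\sigma_3$ is arithmetic'' supplies exactly the two strictly concave means needed is also accurate.
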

\subsection{Extension}
Let \( M \) be an \( n \)-variable operator mean.  
We say that \( M \) is \emph{affinely dominated} if there exists a probability vector \((r_k)_{k=0}^{n-1} \in (0,1)^n\) such that
\[
M(A^{(0)}, \ldots, A^{(n-1)}) \le \sum_k r_k A^{(k)}
\]
for all positive operators \(A^{(0)}, \ldots, A^{(n-1)}\).
An affinely dominated operator mean \(M\) is called \emph{strictly concave} if the following condition holds:  
If positive scalars \(a_0, \ldots, a_{n-1}\) satisfy
\[
M(a_0, \ldots, a_{n-1}) = \sum_k r_k a_k,
\]
then it must follow that \(a_0 = \cdots = a_{n-1}\).

{\Remark 
For multivariate operator means, being non-arithmetic and being strictly concave are not equivalent.  
For example, consider the operator mean defined by  
\[
M(A, B, C):= A \nabla (B \# C).
\]  
This mean satisfies the inequality  
\[
M(A, B, C) \le \frac{1}{2}A + \frac{1}{4}B + \frac{1}{4}C,
\]  
but equality holds when \((A, B, C) = (2, 3, 3)\).

On the other hand, there do exist multivariate operator means that are strictly concave. 
As stated in Corollary~\ref{strict-concavity2}, 
the three-variable operator mean 
$M_{\sigma_1,\sigma_2,\sigma_3}$ 
constructed from a suitable triple of operator means \((\sigma_1, \sigma_2, \sigma_3)\) is strictly concave. 

For a probability vector \( (p_1, p_2, p_3) \in (0,1)^3 \), the arithmetic-geometric mean inequality for positive numbers \( a, b, c \) states that
\[
a^{p_1} b^{p_2} c^{p_3} \leq p_1 a + p_2 b + p_3 c,
\]
with equality if and only if \( a = b = c \).  
Therefore, any operator mean that realizes the weighted geometric mean with weights \( (p_1, p_2, p_3) \) must be strictly concave.

}

\bigskip

This subsection presents an extension from an $n$-variable mean to an $n+1$-variable mean.  


For an arbitrary 
positive operators $A^{(0)}, \ldots ,A^{(n)} \in \mathcal{B}(\mathcal{H})_+$ and n-variable 
operator means $M_0,\ldots ,M_{n}$. For $k\in \{0,1,\ldots n\}$, define 
\begin{equation}\label{our sequence2}
A_{m+1}^{(k)}:=M_k(A_m^{(k+1\mod n+1)}, \ldots , A_m^{(k+n \mod n+1)}),
\end{equation}
where $( A_0^{(0)}, \ldots ,A_0^{(n)}  )=(A^{(0)}, \ldots ,A^{(n)}  ) $. 

In our main theorem stated below, we assert that, under certain conditions,  
the sequence \( A_m^{(k)} \) converges weakly to the same operator for all \( k \).

\begin{theorem}\label{main theorem2} 
Let $M_0,\ldots ,M_{n}$ be non-trivial, affinely dominated $n$-variable operator means. 
Suppose that either all of them are arithmetic, or at most one of which is 
not strictly concave. 
Then there exists a unique probability vector $(p_0,\ldots ,p_{n}) \in (0,1)^{n+1}$ such that, 
for any $A^{(0)}, \ldots ,A^{(n)} \in \mathcal{B}(\mathcal{H})_+$, 
the sequences \( A_{m}^{(0)},\ldots ,A_{m}^{(n)} \), defined as in~\eqref{our sequence2}, 
converge weakly to a common limit \( S \), and
\[
\sum_{k=0}^n p_k A_m^{(k)} \downarrow S.
\]

\end{theorem}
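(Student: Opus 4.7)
The plan is to mimic the proof of Theorem~\ref{main theorem} in the $(n+1)$-variable setting. First, I would encode the affine-domination weights of $M_0,\ldots,M_n$ into an $(n+1)\times(n+1)$ row-stochastic matrix $\Gamma$: if $(r_{k,0},\ldots,r_{k,n-1})\in(0,1)^n$ dominates $M_k$, set $\Gamma_{k,\ell}=r_{k,j}$ when $\ell\equiv k+j+1\pmod{n+1}$ for some $j\in\{0,\ldots,n-1\}$, and $\Gamma_{k,k}=0$. Then $\Gamma$ has zero diagonal and strictly positive off-diagonal entries, so (using $n+1\ge 3$) $\Gamma^{2}$ is strictly positive entrywise, and Remark~\ref{Perron projection2} supplies a unique probability vector $p=(p_0,\ldots,p_n)\in(0,1)^{n+1}$ with $p\Gamma=p$ and $\Gamma^{m}\to(1,\ldots,1)^{t}p$ entrywise.

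Next I would verify that $S_m:=\sum_k p_k A_m^{(k)}$ is decreasing: applying affine domination of each $M_k$ and then re-indexing gives
\[
S_{m+1}\le\sum_k p_k\sum_{j=0}^{n-1}r_{k,j}A_m^{(k+j+1)}=\sum_\ell(p\Gamma)_\ell A_m^{(\ell)}=\sum_\ell p_\ell A_m^{(\ell)}=S_m,
\]
so $S_m\downarrow S$ strongly for some $S\ge 0$. In the fully arithmetic case, each $M_k$ is precisely $\sum_j r_{k,j}(\cdot)$, so the recurrence is linear with transition matrix $\Gamma$, and $\Gamma^{m}\to(1,\ldots,1)^{t}p$ delivers uniform convergence of every $A_m^{(k)}$ to $\sum_\ell p_\ell A^{(\ell)}$; moreover $S_m$ is constant.

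For the strictly concave case, fix a unit vector $v$, let $M:=\max_k\|A^{(k)}\|$, and consider the bounded sequence
\[
t_m:=\bigl(\langle A_m^{(0)}v|v\rangle,\ldots,\langle A_m^{(n)}v|v\rangle\bigr)\in[0,M]^{n+1},
\]
with a convergent subsequence $t_{\varphi(m)}\to\alpha=(\alpha_0,\ldots,\alpha_n)$. The obvious $n$-variable analog of Proposition~\ref{transformer2}, followed by affine domination, gives
\[
\langle A_{m+1}^{(k)}v|v\rangle\le M_k\bigl(\langle A_m^{(k+1)}v|v\rangle,\ldots,\langle A_m^{(k+n)}v|v\rangle\bigr)\le\sum_{j=0}^{n-1}r_{k,j}\langle A_m^{(k+j+1)}v|v\rangle.
\]
Weighting by $p_k$ and using the monotonicity of $S_m$ squeezes every inequality into equality in the limit, yielding
\[
M_k(\alpha_{k+1},\ldots,\alpha_{k+n})=\sum_{j=0}^{n-1}r_{k,j}\alpha_{k+j+1}\qquad(0\le k\le n).
\]
For each strictly concave $M_k$ this forces $\alpha_{k+1}=\cdots=\alpha_{k+n}$, i.e.~all coordinates $\alpha_\ell$ with $\ell\ne k$ agree. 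Since at least $n\ge 2$ of the $M_k$ are strictly concave, a short set-intersection argument (apply the conclusion for two distinct such indices $k_1,k_2$ and observe that $\{0,\ldots,n\}\setminus\{k_1,k_2\}$ is nonempty) shows that all $\alpha_\ell$ coincide, with common value $\sum_k p_k\alpha_k=\lim\langle S_{\varphi(m)}v|v\rangle=\langle Sv|v\rangle$. Every convergent subsequence of $t_m$ thus has limit $(\langle Sv|v\rangle,\ldots,\langle Sv|v\rangle)$, so $A_m^{(k)}\to S$ weakly for each $k$.

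For uniqueness of $p$, I would repeat the diagonal-matrix test of Lemma~\ref{lemma2}: take $A^{(k)}=\mathrm{diag}(1,\ldots,t,\ldots,1)$ with $t$ in slot $k$, apply $S_0\ge S_1$ entry by entry, and differentiate at $t=1$ using Proposition~\ref{derivative}; this produces the $n+1$ scalar relations $p\Gamma=p$, whose probability solution is unique since $1$ is a simple Perron eigenvalue of $\Gamma$. The main obstacle I expect is keeping the cyclic index shift and the $n+1$ affine-domination vectors coordinated cleanly; the cautionary examples in Remark~\ref{Perron projection2} warn that without strict positivity of the off-diagonal $r_{k,j}$, or with $n+1=2$, $\Gamma^m$ can fail to converge, so the ``non-trivial'' and ``affinely dominated'' hypotheses are genuinely needed.
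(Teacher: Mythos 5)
Your proposal is correct and follows essentially the same route as the paper: the paper reduces the arithmetic case to the analogue of Lemma~\ref{lemma1}, defers the convergence argument to ``the same reason as in the proof of Lemma~\ref{lemma2}'', and proves uniqueness by exactly your diagonal-matrix test with the cyclic shift, arriving at $p\Gamma=p$ and invoking Perron--Frobenius. You in fact spell out more than the paper does, notably the explicit set-intersection step showing that two strictly concave $M_{k_1},M_{k_2}$ force all the $\alpha_\ell$ to coincide.
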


\bigskip

Before stating the lemma, we define the map \(\Phi: {\mathbb R}^{n+1} \to M_{n+1}({\mathbb C})\) by
\[
\Phi({\bf x}):= \operatorname{diag}(x_0, \ldots, x_n),
\]
for any real vector \({\bf x} = (x_0, \ldots, x_n)^t\).

\begin{lemma}\label{core}
Let $M_0,\ldots ,M_{n}$ be non-trivial, affinely dominated $n$-variable operator means. 
Suppose that at most one of which is not strictly concave. 
Then there exists a unique probability vector $(p_0,\ldots ,p_{n})\in (0,1)^{n+1}$ such that 
the sequences \( A_{m}^{(0)},\ldots ,A_{m}^{(n)}  \), defined as in (\ref{our sequence2}) 
converge weakly to a common limit \( S \) and 
$$\sum_{k=0}^n p_k A_m^{(k)}  \downarrow S.$$
\end{lemma}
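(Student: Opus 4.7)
The plan is to mirror the proof of Lemma~\ref{lemma2} step by step, with the general Perron--Frobenius fact in Remark~\ref{Perron projection2} replacing the $3\times 3$ version. First, for each $k$ I invoke the affine dominance of $M_k$ to fix a probability vector $(r_{k,0}, \ldots, r_{k,n-1})$ with $M_k(X_0, \ldots, X_{n-1}) \le \sum_j r_{k,j} X_j$, and assemble them into an $(n+1)\times(n+1)$ right stochastic matrix $\Gamma$ where the row $k$ places $r_{k,j}$ at column $k+j+1 \pmod{n+1}$. Because $A_{m+1}^{(k)}$ is built from the indices $k+1, \ldots, k+n$ and skips $k$ itself, $\Gamma$ has vanishing diagonal and strictly positive off-diagonal entries. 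Remark~\ref{Perron projection2} then produces a unique probability vector $p = (p_0, \ldots, p_n) \in (0,1)^{n+1}$ satisfying $p\Gamma = p$, the candidate Perron eigenvector.

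Next I would show that $S_m := \sum_k p_k A_m^{(k)}$ is a bounded decreasing sequence in $B(\mathcal{H})_+$. Using the affine bounds for each $M_k$ and the identity $p\Gamma = p$, one computes
\[
S_{m+1} \le \sum_k p_k \sum_j r_{k,j}\, A_m^{(k+j+1)} = \sum_\ell (p\Gamma)_\ell\, A_m^{(\ell)} = S_m,
\]
so $S_m$ strongly decreases to some $S \ge 0$. To upgrade this to weak convergence of each individual $A_m^{(k)}$ to $S$, fix $v \in \mathcal{H} \setminus \{0\}$ and consider the bounded tuple $t_m := (\langle A_m^{(k)} v \mid v\rangle)_{k=0}^n \in [0,M]^{n+1}$. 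For any subsequential limit $(\alpha_0, \ldots, \alpha_n)$ of $t_m$, combining Proposition~\ref{transformer2} (in its obvious $n$-variable form, which follows by exactly the same rank-one projection argument) with the affine bounds and the monotonicity of $\langle S_m v \mid v\rangle$ yields the squeeze
\[
\sum_k p_k \alpha_k \le \sum_k p_k M_k(\alpha_{k+1}, \ldots, \alpha_{k+n}) \le \sum_k p_k \sum_j r_{k,j}\, \alpha_{k+j+1} = \sum_k p_k \alpha_k.
\]
Hence equality $M_k(\alpha_{k+1}, \ldots, \alpha_{k+n}) = \sum_j r_{k,j}\alpha_{k+j+1}$ holds for every $k$. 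Strict concavity of at least $n$ of the means then forces $\alpha_{k+1} = \cdots = \alpha_{k+n}$ for each such $k$; since at least two of these relations hold and they omit distinct indices, pooling them gives $\alpha_0 = \cdots = \alpha_n$. This common value must equal $\langle S v \mid v\rangle$, so $t_m$ has a unique cluster point and each $A_m^{(k)}$ converges weakly to $S$.

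For uniqueness of $p$, I would test the required inequality $S_0 \ge S_1$ on commuting diagonal configurations on $\mathbb{C}^{n+1}$, e.g.\ taking $A^{(k)} = \Phi(\mathbf{1} + (t-1)e_k)$, so that on the commutative algebra $M_k$ acts pointwise through its representing function. Reading off the $\ell$-th diagonal entry of $S_0 \ge S_1$ and differentiating at $t = 1$ extracts exactly the linear system $p\Gamma = p$ together with $\sum_k p_k = 1$, whose solution is unique by Perron--Frobenius.

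The principal obstacle is the combinatorial/analytic step inside paragraph two: verifying that the scalar equalities from the $n$ strictly concave $M_k$'s genuinely force all $\alpha_i$ to agree. Each strictly concave $M_k$ only identifies the $n$ coordinates it sees and omits the single coordinate $\alpha_k$, so one has to check that two such relations with different omitted indices actually percolate through all $n+1$ coordinates; this requires $n \ge 2$, and it is the reason the hypothesis allows at most one non--strictly-concave $M_k$. A subsidiary bookkeeping nuisance is keeping cyclic indices $\pmod{n+1}$ consistent between the affine bounds, the transition matrix $\Gamma$, and the fixed-point identity $p\Gamma = p$.
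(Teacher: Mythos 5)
Your proposal is correct and takes essentially the same route as the paper: the paper likewise obtains existence by transplanting the argument of Lemma~\ref{lemma2} (Perron--Frobenius for the zero-diagonal stochastic matrix, monotonicity of $\sum_k p_k A_m^{(k)}$, the subsequence/squeeze/strict-concavity step), and proves uniqueness by testing $S_0\ge S_1$ on diagonal configurations $\Phi(U^k\mathbf{b})$ with $\mathbf{b}=(t,1,\ldots,1)^t$ and differentiating at $t=1$ to recover $p\Gamma=p$. The only difference is one of emphasis: you write out the existence half in full (including the pooling of the two strict-concavity relations with distinct omitted indices), whereas the paper defers that half to Lemma~\ref{lemma2} and details only the uniqueness computation.
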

\begin{proof}
From the assumption, we suppose that the following inequality holds for each \(M_k\):
\[
M_k(A_1, \ldots, A_n) \le \sum_{i=1}^n r_i^{(k)} A_i.
\]
Here, each coefficient \(r_i^{(k)}\) lies in \((0, 1)\), and for each \(k\), we have \(\sum_i r_i^{(k)} = 1\).
As stated in Proposition \ref{derivative}, the probability vector 
$(r_1^{(k)}, \ldots ,r_n^{(k)})$ is unique and 
\[
\left. \frac{d}{dt} M_k\big(1, \ldots, 1, \underset{i\text{th}}{t}, 1, \ldots, 1\big) \right|_{t=1} = r_i^{(k)}.
\]

The existence of the probability vector \((p_0, \ldots, p_n)\) stated in this proposition is evident for the same reason as in the proof of Lemma~\ref{lemma2}. Therefore, 
we will show only the uniqueness of the probability vector \((p_0, \ldots, p_n)\). 
The proof is merely a generalization of the corresponding part of Lemma~\ref{lemma2}.

Now, fix \(t > 0\), and for each \(k \in \{0, 1, \ldots, n\}\), define
\[
A^{(k)}:= \Phi\left(U^k {\bf b}\right),
\]
where \(U\) is an \((n+1) \times (n+1)\) cyclic shift matrix 
and \({\bf b}\) is an \((n+1)\)-dimensional vector given by
\[
U:=
\begin{pmatrix}
0 & 0 & \cdots & 0 & 1 \\
1 & 0 & \cdots & 0 & 0 \\
0 & 1 & \ddots & 0 & 0 \\
\vdots & \ddots & \ddots & \ddots & \vdots \\
0 & 0 & \cdots & 1 & 0
\end{pmatrix}
, \quad
{\bf b}:=
\begin{pmatrix}
t \\
1 \\
1 \\
\vdots \\
1 \\
\end{pmatrix}.
\]

Then, the matrix \(A_1^{(k)}\), constructed using the procedure in (\ref{our sequence2}), can be written using the following function \(f_j^{(k)}\) defined by
\[
f_j^{(k)}(t):=
\begin{cases}
M_k(1, \ldots, 1, \underset{j\text{th}}{t}, 1, \ldots, 1) & \text{if } 1 \le j \le n, \\
1 & \text{if } j = 0,
\end{cases}
\]
as follows:
\[
A_1^{(k)} = \Phi\left(U^k {\bf f}^{(k)}\right),
\]
where
\[
{\bf f}^{(k)}:=
\begin{pmatrix}
f_0^{(k)}(t) \\
f_1^{(k)}(t) \\
\vdots \\
f_n^{(k)}(t)
\end{pmatrix}.
\]

From the monotonicity of the sequence 
$(\sum_{k=0}^n p_k A_m^{(k)} )_{m=0}^\infty$, 
we obtain the following:

\[
\sum_{k=0}^n p_k A_1^{(k)} \le \sum_{k=0}^n p_k A_0^{(k)}.
\]
Computing each side yields:
\[
\sum_{k=0}^n p_k A_1^{(k)} 
=
\Phi\left(
\sum_{k=0}^n p_k U^k {\bf f}^{(k)}
\right)
\le
\sum_{k=0}^n p_k A_0^{(k)}
=
\Phi\left(
\sum_{k=0}^n p_k U^k {\bf b}
\right).
\]

Since the right-hand side is a diagonal matrix whose \((i,i)\)-th entries are linear functions of \(t\), the derivatives of both sides at \(t = 1\) should agree.

Computing the derivatives at \(t = 1\), we have:
\[
\left.
\frac{d}{dt}
\left(
\sum_{k=0}^n p_k A_0^{(k)}
\right)\right|_{t=1}
=
\Phi\left(
\sum_{k=0}^n p_k U^k {\bf b}'
\right),
\]
\[
\left.
\frac{d}{dt}
\left(
\sum_{k=0}^n p_k A_1^{(k)}
\right)\right|_{t=1}
=
\Phi\left(
\sum_{k=0}^n p_k U^k \left.\frac{d}{dt} {\bf f}^{(k)} \right|_{t=1}
\right)
=
\Phi\left(
\sum_{k=0}^n p_k U^k {\bf r}^{(k)}
\right),
\]
where
\[
{\bf b}' =
\begin{pmatrix}
1 \\
0 \\
\vdots \\
0
\end{pmatrix}, \quad
{\bf r}^{(k)} =
\begin{pmatrix}
0 \\
r_1^{(k)} \\
\vdots \\
r_n^{(k)}
\end{pmatrix}.
\]
Since the two derivatives above are equal, we obtain the following system of linear equations:
\[
\begin{pmatrix}
p_0 \\
p_1 \\
\vdots \\
p_n
\end{pmatrix}
=
\begin{pmatrix}
0            & r_1^{(0)}   & \cdots & r_n^{(0)}   \\
r_n^{(1)}    & 0           & r_1^{(1)} & \cdots \\
\vdots       &             & \ddots     & \\
r_1^{(n)}    & \cdots      & r_n^{(n)} & 0
\end{pmatrix}^T
\begin{pmatrix}
p_0 \\
p_1 \\
\vdots \\
p_n
\end{pmatrix}.
\]
As mentioned in the proof of Theorem~\ref{main theorem}, such a probability vector \((p_0, \ldots, p_n)\) is uniquely determined.
\end{proof}

\begin{proof}[Proof of Theorem \ref{main theorem2}]
In the above theorem, if all \( M_0, \ldots, M_n \) are arithmetic,  
then the conclusion not only holds obviously, but the sequence 
\( A_{m}^{(k)}  \)
also converges to \( S \) uniformly.  
The proof of this part is essentially the same as that of Lemma~\ref{lemma1}, and is therefore omitted.
In the remaining case, the result follows immediately from the preceding lemma.
\end{proof}
{
\Remark 
In the above theorem, the assumption of being \emph{affinely dominated} is essential.  
For the case \( n = 5 \), consider the definition  
\[
M_k(X_1, \ldots, X_5) := 
\begin{cases}
\frac{1}{2}X_2 + \frac{1}{2}X_3 & \text{if } k \equiv 0 \pmod{2}, \\
\frac{1}{2}X_1 + \frac{1}{2}X_2 & \text{if } k \equiv 1 \pmod{2}.
\end{cases}
\]
Each \( M_k \) is non-trivial, but not affinely dominated. Furthermore,
\[
(A_m^{(0)}, \ldots, A_m^{(5)})^t = \Gamma^m (A_0^{(0)}, \ldots, A_0^{(5)})^t,
\]
where
\[
\Gamma :=
\begin{pmatrix} 
0 & 1 & 0\\ 
0 & 0 & 1 \\
1 & 0 & 0 
\end{pmatrix}
\otimes 
\begin{pmatrix} \frac{1}{2} & \frac{1}{2} \\ \frac{1}{2} & \frac{1}{2} \end{pmatrix}.
\]
As we saw in Remark~\ref{Perron projection}, the sequence \( \Gamma^m \) does not converge.  
Therefore, the sequence \( A_m^{(k)} \) does not converge either.
}

\bigskip

\subsection{Symmetric Operator means}

In what follows, we denote the operator mean constructed in the above theorem by
\[
\mathbb{M}_{(M_0, \ldots, M_n)}.
\]

If \( M \) is permutation invariant, that is, for any permutation \( \pi \), we have
\[
M(A^{(0)}, \ldots, A^{(n-1)}) = M(A^{(\pi(0))}, \ldots, A^{(\pi(n-1))}),
\]
then the operator mean
$
\mathbb{M}_{(M, \ldots, M)}
$
has the following property.

\begin{theorem}\label{ordered}
Let $A^{(0)},\ldots , A^{(n)}$ be positive operators and $M$ 
be an $n$-variable permutation invariant operator mean 
and let $(A_m^{(0)},\ldots , A_m^{n})$
 be the ALM-sequence from 
$(
A^{(0)},\ldots , A^{(n)}
)$, with $(M,\ldots ,M)$. 
If $A^{(0)}\ge \cdots \ge A^{(n-1)}$
, then 
$A_m^{(0)}$ and $A_m^{(n)}$ converge strongly to 
${\Bbb M}_{(M,\ldots ,M)}$.
\end{theorem}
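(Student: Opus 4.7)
The plan is to combine Theorem \ref{main theorem2}, which gives only weak convergence of each $A_m^{(k)}$ to the common limit $S := \mathbb{M}_{(M,\ldots,M)}$, with an order-monotonicity argument that upgrades the convergence of $A_m^{(0)}$ and $A_m^{(n)}$ to strong. The strategy rests on the standard fact that a uniformly bounded monotone sequence of self-adjoint operators converges strongly; so the idea is to split each of $(A_m^{(0)})_m$ and $(A_m^{(n)})_m$ into an even-indexed and an odd-indexed subsequence, show that each of the four is monotone, and then use the weak limit to pin down all four strong limits as $S$.

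The heart of the argument is an order-reversal lemma: if at stage $m$ the ordering $A_m^{(0)} \ge A_m^{(1)} \ge \cdots \ge A_m^{(n)}$ holds, then at stage $m+1$ the reverse ordering $A_{m+1}^{(0)} \le A_{m+1}^{(1)} \le \cdots \le A_{m+1}^{(n)}$ holds. This is a direct consequence of the permutation invariance of $M$: the operator $A_{m+1}^{(k)}$ depends only on the multiset $\{A_m^{(j)} : j \ne k\}$, and passing from index $k$ to index $k+1$ amounts to exchanging the excluded element $A_m^{(k)}$ for the smaller $A_m^{(k+1)}$, so the monotonicity of $M$ yields $A_{m+1}^{(k)} \le A_{m+1}^{(k+1)}$. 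Starting from the assumed order $A^{(0)} \ge \cdots \ge A^{(n)}$, the ordering therefore alternates at every step; in particular $A_{2m}^{(0)}$ is maximal and $A_{2m}^{(n)}$ minimal at even stages, with the roles reversing at odd stages.

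With the alternation in hand, the monotonicity of the four subsequences reduces to the monotonicity of $M$ together with the normalization $M(C,\ldots,C)=C$. At an even stage, $A_{2m}^{(j)} \le A_{2m}^{(0)}$ for all $j$, so $A_{2m+1}^{(k)} \le A_{2m}^{(0)}$ for every $k$, and therefore $A_{2m+2}^{(0)} = M(A_{2m+1}^{(1)},\ldots,A_{2m+1}^{(n)}) \le A_{2m}^{(0)}$; thus $(A_{2m}^{(0)})_m$ is decreasing. A symmetric argument shows $(A_{2m+1}^{(0)})_m$ is increasing, while the corresponding subsequences of $(A_m^{(n)})_m$ are respectively increasing and decreasing. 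All four subsequences are uniformly bounded by $\|A^{(0)}\|\,I$, hence converge strongly. Since Theorem \ref{main theorem2} also gives weak convergence of $A_m^{(0)}$ and $A_m^{(n)}$ to $S$, every one of these strong limits must equal $S$, and strong convergence of the full sequences follows.

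I do not anticipate a serious obstacle: the whole argument is an exchange-type combinatorial computation built on permutation invariance, monotonicity, and normalization. The only care needed is in tracking the parity of the alternating order so that each subsequence's direction of monotonicity is correctly identified, and in verifying that the ordering hypothesis propagates cleanly through the recursion.
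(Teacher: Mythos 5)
Your proposal is correct and follows essentially the same route as the paper: the order-reversal at each step via permutation invariance and monotonicity, the split of $A_m^{(0)}$ (and $A_m^{(n)}$) into bounded monotone even/odd subsequences that converge strongly, and the identification of both strong limits with $S$ using the weak convergence supplied by Theorem \ref{main theorem2}. The only cosmetic difference is that you spell out the step $A_{2m+1}^{(k)}\le A_{2m}^{(0)}$ (using $M(C,\ldots,C)=C$) which the paper leaves implicit.
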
 

\begin{proof}
From the assumption, we have 
\begin{align*}
A_1^{(0)} &= M(A_0^{(1)}, A_0^{(2)}, \ldots , A_0^{(n)}) \\
&\le 
M(A_0^{(0)}, A_0^{(2)}, \ldots , A_0^{(n)}) \\
&=
M( A_0^{(2)}, \ldots , A_0^{(n)},A_0^{(0)}) = A_1^{(1)} \\
&\le 
M(A_0^{(1)}, A_0^{(3)}, \ldots , A_0^{(n)},A_0^{(0)})
 \\
&=
M(A_0^{(3)},  \ldots , A_0^{(n)}, A_0^{(0)}, A_0^{(1)})  = A_1^{(2)} \\
&\le \cdots \le A_1^{(n)}. 
\end{align*}
By repeating this process, the following relation:
\[
A_{2m}^{(0)} \ge \cdots  \ge A_{2m}^{(n)}, \quad \text{and} \quad
A_{2m+1}^{(0)} \le \cdots  \le A_{2m+1}^{(n)}
\]
is obtained. Moreover, we have
\[
A_{2m+2}^{(0)}  = M(( A_{2m+1}^{(k)})_{k\not = 0}) 
 = M((M(( A_{2m}^{(i)})_{i\not = k})_{k\not=0}) 
 \le
 A_{2m}^{(0)},
\]
and 
\[
A_{2m+1}^{(0)} 
 \ge
 A_{2m-1}^{(0)}.
\]
Since both sequences  $\{A_{2m}^{(0)}\}$ and $\{A_{2m+1}^{(0)}\}$ are bounded and monotone, they converge strongly to operators $M_0$ and $M_1$, respectively. Thus,
\[
\begin{aligned}
\langle {\Bbb M}_{(M,\ldots ,M)}(  A^{(0)},\ldots , A^{(n)}    )\, v \mid v \rangle 
&= \lim_n \langle A_{2m}^{(0)}\, v \mid v \rangle = \langle M_0\, v \mid v \rangle \\
&= \lim_n \langle A_{2m+1}^{(0)}\, v \mid v \rangle = \langle M_1\, v \mid v \rangle.
\end{aligned}
\]
Therefore, the sequence $A_m^{(0)}$ converges strongly to 
$ {\Bbb M}_{(M,\ldots ,M)}(  A^{(0)},\ldots , A^{(n)})$. 
Similarly, one can show that the sequence $A_m^{(n)}$ also converges strongly to the same limit.
\end{proof}

In the case \( n = 2 \), the above theorem yields the following result. 
This result corresponds to a generalization of the theorem by Petz and Temesi \cite[Theorem 1]{PT}.
\begin{corollary}
Let \( A, B, C \) be positive operators, \( \sigma \) be a symmetric two-variable operator mean, and \( (A_m, B_m, C_m) \) be the ALM sequence generated from \( (A, B, C) \) with \( (\sigma, \sigma, \sigma) \).
If $A\ge B\ge C$, then $A_m,B_m,C_m$ converge strongly to
$
M_{\sigma,\sigma,\sigma}(A, B, C).
$
\end{corollary}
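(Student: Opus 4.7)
The plan is to read this corollary as a direct specialization of Theorem \ref{ordered} at $n=2$ with $M=\sigma$, together with a short sandwich argument to handle the middle sequence $B_m$, which Theorem \ref{ordered} does not cover explicitly. A symmetric two-variable operator mean is permutation invariant, and the hypothesis $A\ge B$ (which follows from $A\ge B\ge C$) is exactly the ordering required. Applying Theorem \ref{ordered} to $(A^{(0)},A^{(1)},A^{(2)}):=(A,B,C)$ yields that $A_m=A_m^{(0)}$ and $C_m=A_m^{(2)}$ converge strongly to ${\Bbb M}_{(\sigma,\sigma,\sigma)}(A,B,C)$. Since Theorem \ref{main theorem} already guarantees that each of $A_m,B_m,C_m$ converges weakly to $M_{\sigma,\sigma,\sigma}(A,B,C)$, this common strong limit must equal $M_{\sigma,\sigma,\sigma}(A,B,C)$; denote it by $S$.

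To control $B_m=A_m^{(1)}$, I would extract from the proof of Theorem \ref{ordered} the stronger intermediate orderings
\[
A_{2m}\ge B_{2m}\ge C_{2m} \quad\text{and}\quad A_{2m+1}\le B_{2m+1}\le C_{2m+1},
\]
valid for every $m$. This places $B_m$ between $A_m$ and $C_m$ in the operator order at every step, so $0\le |B_m-A_m|\le |C_m-A_m|$ as positive operators. Since $A_m-C_m\to 0$ strongly and $\||T|x\|=\|Tx\|$ for any bounded self-adjoint $T$, the sequence $|C_m-A_m|$ also tends strongly to $0$.

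The final step is a standard sandwich for strong operator convergence: if $0\le Y_m\le X_m$ with $\|Y_m\|$ uniformly bounded and $X_m\to 0$ strongly, then $Y_m\to 0$ strongly. This follows from the quadratic-form estimate $\|Y_m^{1/2}x\|^{2}=\langle Y_m x,x\rangle\le\langle X_m x,x\rangle\to 0$, which gives $Y_m^{1/2}\to 0$ strongly, and then $Y_m x=Y_m^{1/2}(Y_m^{1/2}x)\to 0$ by boundedness of $\|Y_m^{1/2}\|$. Applying this with $Y_m=|B_m-A_m|$ and $X_m=|C_m-A_m|$ yields $B_m\to S$ strongly.

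The main obstacle is minimal: no new ideas beyond Theorem \ref{ordered} are needed, and the only step requiring care is the passage from the positive-operator sandwich to strong convergence, handled by the quadratic-form estimate above.
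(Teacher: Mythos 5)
Your argument is correct, and its first half coincides with the paper's proof: both invoke Theorem \ref{ordered} (with $n=2$, $M=\sigma$, noting that a symmetric mean is permutation invariant and non-trivial) to get strong convergence of $A_m$ and $C_m$, and identify the limit with $M_{\sigma,\sigma,\sigma}(A,B,C)$ via the weak convergence from Theorem \ref{main theorem}. Where you diverge is in handling $B_m$. The paper's route is shorter: since the probability vector for $(\sigma,\sigma,\sigma)$ is $(1/3,1/3,1/3)$, Theorem \ref{main theorem2} gives that $(A_m+B_m+C_m)/3$ decreases to $M$, hence converges strongly, and then $B_m=(A_m+B_m+C_m)-A_m-C_m$ converges strongly to $3M-M-M=M$ by pure algebra. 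You instead extract the intermediate orderings $A_{2m}\ge B_{2m}\ge C_{2m}$ and $A_{2m+1}\le B_{2m+1}\le C_{2m+1}$ from the proof of Theorem \ref{ordered} (they are indeed established there), deduce $0\le |B_m-A_m|\le |C_m-A_m|$, and close with the square-root sandwich $\|Y_m^{1/2}x\|^2=\langle Y_mx,x\rangle\le\langle X_mx,x\rangle$ together with the uniform bound on $\|Y_m\|$. Every step of this checks out, including the identity $\||T|x\|=\|Tx\|$ for self-adjoint $T$ and the boundedness of the iterates. What your version buys is independence from the strong convergence of the weighted sum: it only uses the order structure of the iterates, so it would survive in a setting where one knows Theorem \ref{ordered} but not the monotone decrease of $\sum p_kA_m^{(k)}$. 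What it costs is length; the paper's subtraction argument is a one-liner once the sum is known to converge strongly, which it is here by construction.
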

\begin{proof}
The probability vector corresponding to \( (\sigma, \sigma, \sigma) \) is \( (1/3, 1/3, 1/3) \).  
Hence, by Theorem \ref{main theorem2}, the sequence \( (A_m + B_m + C_m)/3 \) converges strongly to  
$M:=
M_{\sigma,\sigma,\sigma}(A, B, C)
$.   
From the preceding theorem, \( A_m \) and \( C_m \) also converge strongly to $M$. 
Therefore, \( B_m \) must also converge strongly to $M$. 
\end{proof}

\bigskip

\subsection{Application }
Finally, we revisit the discussion of the three-variable operator mean developed in the previous sections.
Assume that 
for each $k=1,2,3$, both 
$\sigma_k$ and its adjoint $\sigma_k^*$ 
satisfy the assumption of Theorem \ref{main theorem}. 
The following results hold.  
\begin{proposition}\label{matrix adjoint}
Let \( A, B, C \) be positive definite matrices. Then 
$$M_{\sigma_1^*, \sigma_2^*, \sigma_3^*}(A^{-1}, B^{-1}, C^{-1})
=M_{\sigma_1, \sigma_2, \sigma_3}(A, B, C)^{-1}.$$
\end{proposition}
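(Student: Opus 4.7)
The plan is to exploit the definition of the adjoint, namely $X \sigma^* Y = (X^{-1} \sigma Y^{-1})^{-1}$ for invertible positive $X, Y$, and show that the ALM-sequence for $(A^{-1}, B^{-1}, C^{-1})$ with $(\sigma_1^*, \sigma_2^*, \sigma_3^*)$ is obtained by inverting, term by term, the ALM-sequence for $(A, B, C)$ with $(\sigma_1, \sigma_2, \sigma_3)$. Taking limits then yields the claimed identity. Since we are working with positive definite matrices, the ambient space is finite-dimensional and weak convergence coincides with norm convergence, which will make the inversion in the limit transparent.

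Concretely, let $(A_n, B_n, C_n)$ denote the ALM-sequence from $(A, B, C)$ with $(\sigma_1, \sigma_2, \sigma_3)$, and let $(A'_n, B'_n, C'_n)$ denote the ALM-sequence from $(A^{-1}, B^{-1}, C^{-1})$ with $(\sigma_1^*, \sigma_2^*, \sigma_3^*)$. I would prove by induction on $n$ that
\[
A'_n = A_n^{-1},\quad B'_n = B_n^{-1},\quad C'_n = C_n^{-1}.
\]
The base case is immediate. For the inductive step, assuming the equalities at stage $n$ and using the adjoint relation,
\[
A'_{n+1} = B'_n \sigma_1^* C'_n = B_n^{-1} \sigma_1^* C_n^{-1} = (B_n \sigma_1 C_n)^{-1} = A_{n+1}^{-1},
\]
and analogously for $B'_{n+1}$ and $C'_{n+1}$. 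Note that at each step $A_n, B_n, C_n$ are invertible: this follows by induction using the lower bound $X \sigma Y \ge (X^{-1} \nabla_{r} Y^{-1})^{-1}$ (where $r$ is the weight of $\sigma$), which guarantees that every operator mean of invertible positive matrices is again invertible.

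To pass to the limit, apply Theorem \ref{main theorem} to both sequences. In finite dimensions weak convergence is norm convergence, so $A_n \to M_{\sigma_1,\sigma_2,\sigma_3}(A,B,C)$ and $A'_n \to M_{\sigma_1^*,\sigma_2^*,\sigma_3^*}(A^{-1},B^{-1},C^{-1})$ in norm. By the harmonic–arithmetic sandwich from the Corollary just after Proposition \ref{mean order}, both limits are positive definite (hence invertible). Therefore inversion is continuous at each limit, and the relation $A'_n = A_n^{-1}$ persists in the limit, giving the desired identity.

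The main (and essentially only) obstacle is ensuring that the invertibility of the iterates $A_n, B_n, C_n$ is preserved along the entire sequence so that the adjoint identity can be applied at every step, and that the limit is invertible so that passing to inverses in the limit is legitimate. Both issues dissolve in the matrix setting because of finite-dimensionality and the harmonic mean lower bound; this is precisely why the statement is restricted to positive definite matrices rather than to general $A, B, C \in B(\mathcal{H})_+$.
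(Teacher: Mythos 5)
Your proposal is correct and follows essentially the same route as the paper: both establish the termwise identity $([A^{-1}]_n,[B^{-1}]_n,[C^{-1}]_n)=(A_n^{-1},B_n^{-1},C_n^{-1})$ from the adjoint relation and then pass to the limit. You simply make explicit the details the paper compresses into ``a simple calculation'' and an unjustified interchange of limit and inversion, namely the invertibility of the iterates, the norm convergence available in finite dimensions, and the invertibility of the limit via the harmonic--arithmetic sandwich.
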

\begin{proof}
Let \((A_n, B_n, C_n)\) denote the ALM-sequence from \((A, B, C)\), with \((\sigma_1, \sigma_2, \sigma_3)\),  
and \(([A^{-1}]_n, [B^{-1}]_n, [C^{-1}]_n)\) that from \((A^{-1}, B^{-1}, C^{-1})\), with \((\sigma_1^*, \sigma_2^*, \sigma_3^*)\).

By a simple calculation, we have 
$$(A_n^{-1}, B_n^{-1}, C_n^{-1})=([A^{-1}]_n, [B^{-1}]_n, [C^{-1}]_n),$$
which implies  
$$
M_{\sigma_1^*, \sigma_2^*, \sigma_3^*}(A^{-1}, B^{-1}, C^{-1})
=
\lim_n [A^{-1}]_n
=
\lim_n A_n^{-1} = M_{\sigma_1, \sigma_2, \sigma_3}(A, B, C)^{-1}.
$$
\end{proof}
\begin{corollary}
Let \( X,Y,Z \) be positive definite matrices and let $X_n,Y_n,Z_n$ be 
sequences of positive definite matrices. 
If $X_n, Y_n, Z_n$ are increasing and converge to $X,Y,Z$ respectively, 
then 
$$M_{\sigma_1, \sigma_2, \sigma_3}(X_n,Y_n,Z_n)\uparrow 
M_{\sigma_1, \sigma_2, \sigma_3}(X,Y,Z).$$
\end{corollary}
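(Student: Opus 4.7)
The plan is to reduce the desired upward continuity to the downward continuity already proved in Proposition \ref{downward}, using the adjoint identity of Proposition \ref{matrix adjoint} as the bridge between the two. The core observation is that on the cone of positive definite matrices, inversion is a continuous, order-reversing involution, so an increasing sequence converging to a positive definite limit becomes, after inversion, a decreasing sequence converging to the inverse limit.

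Concretely, I would proceed in four steps. First, from $X_n \uparrow X$ with $X_n, X$ positive definite, I get $X_n^{-1} \ge X_{n+1}^{-1} \ge X^{-1}$ and $X_n^{-1} \to X^{-1}$ (since inversion is continuous on the positive definite cone in finite dimensions), so $X_n^{-1} \downarrow X^{-1}$, and likewise for $Y_n^{-1} \downarrow Y^{-1}$, $Z_n^{-1} \downarrow Z^{-1}$. Second, by the standing hypothesis that $(\sigma_1^*, \sigma_2^*, \sigma_3^*)$ also satisfies the assumption of Theorem \ref{main theorem}, the mean $M_{\sigma_1^*, \sigma_2^*, \sigma_3^*}$ is well-defined, and Proposition \ref{downward} applies to give
\[
M_{\sigma_1^*, \sigma_2^*, \sigma_3^*}(X_n^{-1}, Y_n^{-1}, Z_n^{-1}) \downarrow M_{\sigma_1^*, \sigma_2^*, \sigma_3^*}(X^{-1}, Y^{-1}, Z^{-1}).
\]
Third, I apply Proposition \ref{matrix adjoint} to both sides, rewriting them as $M_{\sigma_1, \sigma_2, \sigma_3}(X_n, Y_n, Z_n)^{-1}$ and $M_{\sigma_1, \sigma_2, \sigma_3}(X, Y, Z)^{-1}$ respectively. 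Fourth, I invert once more, using that $A \ge B > 0$ iff $0 < A^{-1} \le B^{-1}$, to conclude $M_{\sigma_1, \sigma_2, \sigma_3}(X_n, Y_n, Z_n) \uparrow M_{\sigma_1, \sigma_2, \sigma_3}(X, Y, Z)$.

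There is no serious obstacle here; the argument is really just bookkeeping around the adjoint. The two small points that must be checked are: (a) the sequence $M_{\sigma_1, \sigma_2, \sigma_3}(X_n, Y_n, Z_n)$ consists of genuinely invertible operators so that the final inversion step is legal — this follows from the harmonic lower bound in the corollary to Proposition \ref{mean order}, which gives $M_{\sigma_1, \sigma_2, \sigma_3}(X_n, Y_n, Z_n) \ge (p_1 X_n^{-1} + p_2 Y_n^{-1} + p_3 Z_n^{-1})^{-1} > 0$; and (b) the adjoint identity Proposition \ref{matrix adjoint} is applicable throughout since all entries $X_n, Y_n, Z_n, X, Y, Z$ are positive definite. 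Finite-dimensionality enters only to guarantee that inversion is continuous on the positive definite cone, which is exactly why the statement is restricted to matrices.
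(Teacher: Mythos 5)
Your proof is correct and is exactly the argument the paper intends: the paper's proof is the single sentence ``It follows immediately from the preceding proposition,'' and the details you supply (inverting to turn $X_n\uparrow X$ into $X_n^{-1}\downarrow X^{-1}$, applying Proposition \ref{downward} to $M_{\sigma_1^*,\sigma_2^*,\sigma_3^*}$, and transporting back via Proposition \ref{matrix adjoint}) are precisely the steps the authors spell out for the analogous corollary to Proposition \ref{Self-adjoint} later in the paper. Your care about invertibility of $M_{\sigma_1,\sigma_2,\sigma_3}(X_n,Y_n,Z_n)$ and the role of finite-dimensionality is a welcome addition rather than a deviation.
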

\begin{proof}
It follows immediately from the preceding proposition. 
\end{proof}
The fact $\#_{r}^*=\#_r$ implies the following. 
\begin{corollary}
Let \( A, B, C \) be positive definite matrices. Then 
$$M_{\#_{r_1}, \#_{r_2}, \#_{r_3}}(A^{-1}, B^{-1}, C^{-1})^{-1}
=M_{\#_{r_1}, \#_{r_2}, \#_{r_3}}(A,B,C).$$
\end{corollary}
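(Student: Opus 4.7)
The plan is to deduce this corollary as an immediate specialization of Proposition \ref{matrix adjoint}, exploiting the self-adjointness of the weighted geometric mean $\#_{r}$, as the sentence preceding the statement already suggests. The entire argument reduces to two routine verifications followed by inverting one identity.

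First, I would record that $\#_{r_k}^* = \#_{r_k}$ for each $k$. Recalling that the adjoint of a two-variable Kubo--Ando mean with representing function $f$ has representing function $f^*(t) = 1/f(1/t)$, a direct computation with $f(t) = t^{r_k}$ yields $f^*(t) = 1/(1/t)^{r_k} = t^{r_k}$, so the weighted geometric mean is its own adjoint. Next, I would check that the hypotheses of Proposition \ref{matrix adjoint} are met by the triple $(\sigma_1,\sigma_2,\sigma_3) = (\#_{r_1},\#_{r_2},\#_{r_3})$: since $\#_{r_k}^* = \#_{r_k}$, it suffices that each $\#_{r_k}$ itself satisfies the assumptions of Theorem \ref{main theorem}, and indeed each $\#_{r_k}$ is non-trivial and strictly concave (its representing function $t^{r_k}$ is operator monotone and not affine for $r_k \in (0,1)$, so by the remark following Proposition \ref{strict concavity1} it is strictly concave).

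Applying Proposition \ref{matrix adjoint} and substituting $\sigma_k^* = \#_{r_k}$ on the left-hand side then yields
$$M_{\#_{r_1}, \#_{r_2}, \#_{r_3}}(A^{-1}, B^{-1}, C^{-1}) \;=\; M_{\#_{r_1}^*, \#_{r_2}^*, \#_{r_3}^*}(A^{-1}, B^{-1}, C^{-1}) \;=\; M_{\#_{r_1}, \#_{r_2}, \#_{r_3}}(A, B, C)^{-1},$$
and inverting both sides gives the stated identity.

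There is no substantive obstacle: the corollary is a formal consequence of the preceding proposition once the self-adjointness $\#_r^* = \#_r$ is established. The only point requiring care is the computation of the representing function of $\#_r^*$, which is a standard fact from the Kubo--Ando theory reviewed in Section 2.
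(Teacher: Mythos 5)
Your proposal is correct and follows exactly the route the paper takes: the paper derives this corollary from Proposition \ref{matrix adjoint} via the one-line observation that $\#_{r}^*=\#_r$, which you verify explicitly through the representing-function computation $f^*(t)=1/f(1/t)=t^{r}$. Your additional check that each $\#_{r_k}$ (for $r_k\in(0,1)$) is non-trivial and strictly concave, so that the hypotheses of Proposition \ref{matrix adjoint} are satisfied, is a detail the paper leaves implicit but is entirely in the spirit of its argument.
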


The above three results concern the properties of  $M_{\#,\#,\#}$ 
on a finite-dimensional Hilbert space. It is conjectured that these result also
 hold without the assumption of finite dimensionality.
Indeed, for $M_{\#,\#,\#}$, this conjecture, namely self-adjointness, holds.

\begin{proposition}\label{Self-adjoint}
$$(M_{\#,\#,\#})^*=M_{\#,\#,\#}.$$
\end{proposition}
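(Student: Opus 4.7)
By the uniqueness of the adjoint established in Subsection~\ref{adjoint of mean}, it suffices to show that
\[
M_{\#,\#,\#}(A^{-1},B^{-1},C^{-1}) = M_{\#,\#,\#}(A,B,C)^{-1}
\]
for every $A,B,C\in\mathbb{P}$; then $M_{\#,\#,\#}$ is its own adjoint. The key algebraic input is the two-variable identity $(X\#Y)^{-1}=X^{-1}\#Y^{-1}$: it ensures that the ALM-sequence from $(A^{-1},B^{-1},C^{-1})$ with $(\#,\#,\#)$ is precisely $(A_n^{-1},B_n^{-1},C_n^{-1})$, where $(A_n,B_n,C_n)$ is the ALM-sequence from $(A,B,C)$. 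The finite-dimensional argument (Proposition~\ref{matrix adjoint}) closes the proof with continuity of inversion, but the weak convergence supplied by Theorem~\ref{main theorem} does not commute with inversion in infinite dimension; the plan is therefore to upgrade the convergence mode for invertible inputs.

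Fix $0<m\le M$ with $mI\le A,B,C\le MI$; since $\#$ preserves this order interval, one has $A_n,B_n,C_n\in[mI,MI]$ for all $n$. The analytic engine is the classical Thompson-metric Lipschitz bound
\[
d_T(X\#Y,\,X'\#Y') \le \tfrac12\,d_T(X,X') + \tfrac12\,d_T(Y,Y')
\]
for the two-variable geometric mean. Using the symmetry $C_n\#A_n=A_n\#C_n$, I would estimate
\[
d_T(A_{n+1},B_{n+1}) = d_T(B_n\#C_n,\,A_n\#C_n) \le \tfrac12\,d_T(A_n,B_n),
\]
with analogous halvings for the pairs $(B_{n+1},C_{n+1})$ and $(A_{n+1},C_{n+1})$. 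Thus each pairwise Thompson distance decays like $2^{-n}$, and on the bounded order interval $[mI,MI]$ this translates into $\|A_n-B_n\|,\|B_n-C_n\|,\|A_n-C_n\|\to 0$ in operator norm.

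Theorem~\ref{main theorem} gives $(A_n+B_n+C_n)/3\downarrow S:=M_{\#,\#,\#}(A,B,C)$ strongly, since the probability vector associated to $(\#,\#,\#)$ is $(1/3,1/3,1/3)$. I would then decompose
\[
A_n - S = \Bigl(\tfrac13(A_n+B_n+C_n)-S\Bigr) + \tfrac13\bigl((A_n-B_n)+(A_n-C_n)\bigr).
\]
The first summand tends strongly to $0$ by monotone convergence and the second tends to $0$ in operator norm, so $A_n\to S$ strongly; symmetrically, $B_n,C_n\to S$ strongly. Since $A_n\ge mI$ uniformly gives $\|A_n^{-1}\|\le m^{-1}$, the identity $A_n^{-1}-S^{-1}=A_n^{-1}(S-A_n)S^{-1}$ upgrades $A_n\to S$ to $A_n^{-1}\to S^{-1}$ strongly. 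Therefore the weak limit of $A_n^{-1}$, which by construction equals $M_{\#,\#,\#}(A^{-1},B^{-1},C^{-1})$, coincides with $S^{-1}$, establishing the required identity.

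The hard part will be the geometric decay of pairwise Thompson distances in the second paragraph. It relies on two structural features of $\#$: its symmetry, which lets one cancel the common argument $C_n$ in the Lipschitz estimate, and its weight $1/2$, which makes the resulting contraction factor strictly less than $1$. For a general triple $(\#_{r_1},\#_{r_2},\#_{r_3})$ both features are lost and only a non-strict contraction matrix acts on the vector of pairwise distances, consistent with the paper's remark that the infinite-dimensional self-adjointness beyond $M_{\#,\#,\#}$ remains conjectural.
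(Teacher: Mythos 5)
Your proof is correct, and it takes a genuinely different route from the paper's. The paper first proves the identity $M_{\#,\#,\#}(A^{-1},B^{-1},C^{-1})=M_{\#,\#,\#}(A,B,C)^{-1}$ for \emph{ordered} triples $A\ge B\ge C>0$: there Theorem~\ref{ordered} makes the even-indexed subsequence $A_{2n}$ monotone decreasing, hence strongly convergent, and the uniform bound $A_{2n}\ge C$ lets one invert; the general case is then reduced to the ordered one by choosing scalars with $\lambda A\ge \mu B\ge C$ and invoking the joint homogeneity of Proposition~\ref{joint homo}. You instead work directly with arbitrary invertible inputs and upgrade the convergence mode analytically: the contraction estimate $d_T(X\# Y, X'\# Y')\le \tfrac12 d_T(X,X')+\tfrac12 d_T(Y,Y')$ (which follows from monotonicity and the homogeneity $(\lambda X)\#(\mu Y)=\sqrt{\lambda\mu}\,(X\# Y)$, and is the two-variable case of the paper's own Thompson-metric corollary) combined with the symmetry of $\#$ halves each pairwise Thompson distance at every step, so $\|A_n-B_n\|,\|B_n-C_n\|,\|A_n-C_n\|\to 0$ on the invariant order interval $[mI,MI]$; together with $S_n\downarrow S$ this forces $A_n\to S$ strongly, and the uniform lower bound $A_n\ge mI$ then gives $A_n^{-1}\to S^{-1}$ strongly. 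Both arguments hinge on the same two ingredients --- a strong (not merely weak) mode of convergence plus a uniform lower bound making inversion continuous --- but yours dispenses with Theorem~\ref{ordered} and the scaling trick, and yields as a byproduct the strong convergence of the full ALM-sequence for arbitrary invertible inputs, which is not stated in the paper. Your closing observation is also accurate: the argument uses both the symmetry and the weight $\tfrac12$ of $\#$ to cancel the common argument and obtain a strict contraction, which is exactly what fails for a general triple $(\#_{r_1},\#_{r_2},\#_{r_3})$.
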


\bigskip
\begin{lemma}
If $A\ge B \ge C>0$, then
$$
M_{\#,\#,\#}(A^{-1},B^{-1},C^{-1})=M_{\#,\#,\#}(A,B,C)^{-1}.$$
\end{lemma}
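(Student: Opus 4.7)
The plan is to reduce the identity to a statement about a single strongly convergent ALM-sequence whose inverses we can control, and then read off the limit of those inverses by uniqueness.

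First, I will exploit the self-duality $(X\#Y)^{-1} = X^{-1}\#Y^{-1}$ of the two-variable geometric mean. A straightforward induction on $n$ then shows that if $(A_n,B_n,C_n)$ denotes the ALM-sequence from $(A,B,C)$ with $(\#,\#,\#)$, then $(A_n^{-1},B_n^{-1},C_n^{-1})$ is exactly the ALM-sequence from $(A^{-1},B^{-1},C^{-1})$ with the same triple of means. Thus the two sequences whose weak limits we wish to compare are literally termwise inverses of one another; the question becomes whether the limit operation commutes with inversion.

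Next, I will invoke the ordering hypothesis $A\ge B\ge C>0$. By the symmetric Petz--Temesi-type corollary stated immediately before the lemma, each of $A_n,B_n,C_n$ converges strongly to $S := M_{\#,\#,\#}(A,B,C)$. Choosing $c>0$ with $C\ge cI$, and using $cI\#cI = cI$ together with the monotonicity of $\#$, an induction gives $A_n,B_n,C_n\ge cI$ for every $n$; passing to the strong limit then gives $S\ge cI$ as well.

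Finally, from the perturbation identity $A_n^{-1}-S^{-1} = A_n^{-1}(S-A_n)S^{-1}$ and the uniform bound $\|A_n^{-1}\|\le 1/c$, the sequence $A_n^{-1}$ converges strongly, hence weakly, to $S^{-1}$. On the other hand, Theorem~\ref{main theorem} applied to $(\#,\#,\#)$ says that the ALM-sequence from $(A^{-1},B^{-1},C^{-1})$ converges weakly to $M_{\#,\#,\#}(A^{-1},B^{-1},C^{-1})$. Uniqueness of the weak limit yields the identity.

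The main obstacle is securing the uniform lower bound used in the third step: without it, strong convergence of $A_n$ to $S$ need not imply any kind of convergence of $A_n^{-1}$ to $S^{-1}$, and the argument collapses. The strict positivity of the smallest operator $C$, combined with the order-preserving structure of $\#$ and the ordering $A\ge B\ge C$, is exactly what allows one to upgrade the generally only weak convergence guaranteed by Theorem~\ref{main theorem} to the strong convergence bounded below by $cI$ that is needed to invert the limit.
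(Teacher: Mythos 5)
Your proposal is correct and takes essentially the same route as the paper: both reduce the claim to termwise inversion of the ALM-sequence (via $\#^{*}=\#$), use the ordering hypothesis to upgrade weak convergence to strong convergence of $A_n$ to $S$, and then pass the inversion through the limit by the identity $A_n^{-1}-S^{-1}=A_n^{-1}(S-A_n)S^{-1}$ together with the uniform bound $A_n\ge C\ge cI$. The only cosmetic difference is that the paper works with the monotone even subsequence $A_{2n}\downarrow S$ coming directly from Theorem~\ref{ordered}, whereas you invoke the corollary giving strong convergence of the full sequence.
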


\begin{proof}
We continue to use the notations $A_n$ and $[A^{-1}]_n$ as in the proof of 
Proposition \ref{matrix adjoint}. 
It follows from Theorem \ref{ordered} that 
$A_{2n} \downarrow M(:=M_{\#,\#,\#}(A,B,C))$ and 
$[A^{-1}]_{2n} \uparrow  M_{\#,\#,\#}(A^{-1},B^{-1},C^{-1})$. 
Thus we have 
\begin{align*}
\|M^{-1}x- [A^{-1}]_{2n}x\|
&=
\|(M^{-1}- A_{2n}^{-1})x\| \\
&=
\|A_{2n}^{-1}(A_{2n}-M)M^{-1}x\| \\
&\le 
\|A_{2n}^{-1}\| \|(A_{2n}-M)M^{-1}x\| \\
&\le
\|C^{-1}\|\|(A_{2n}-M)M^{-1}x\| \rightarrow 0\ \  (n\rightarrow \infty).  
\end{align*}
This shows the desired result. 
\end{proof}

\begin{proof}[Proof of Proposition \ref{Self-adjoint}]
From the discussion in Section~\ref{adjoint of mean}, it is sufficient to show that
\[
M_{\#,\#,\#}(A^{-1}, B^{-1}, C^{-1}) = M_{\#,\#,\#}(A, B, C)^{-1}
\]
for any positive invertible operators \( A, B, C \).  
In this case, there exist scalars \( \lambda, \mu > 0 \) such that
\[
\lambda A \ge \mu B \ge C > 0.
\]
Then, by applying the above lemma and Proposition~\ref{joint homo}, we obtain
\begin{align*}
\lambda^{-1/3}\mu^{-1/3}M_{\#,\#,\#}(A, B, C)^{-1}
&=
M_{\#,\#,\#}(\lambda A, \mu B, C)^{-1} \\
&=
M_{\#,\#,\#}((\lambda A)^{-1}, (\mu B)^{-1}, C^{-1}) \\
&=
M_{\#,\#,\#}(\lambda^{-1} A^{-1}, \mu^{-1} B^{-1}, C^{-1}) \\
&=
\lambda^{-1/3} \mu^{-1/3} M_{\#,\#,\#}(A^{-1}, B^{-1}, C^{-1}).
\end{align*}
\end{proof}
{\Remark
Proposition \ref{Self-adjoint} is a result that has already been acknowledged in other papers (cf. \cite{JLLY}),   however, the authors have not seen a proof of it. Here, as an application of the preceding theorem, we provide a proof in the hope that it will be of benefit to the reader.
It would be of interest to know whether this argument is essentially the same as the one given elsewhere, if available.
}
\bigskip

\begin{corollary}
Let $A,B,C >0$ and $A^{(k)}, B^{(k)}, C^{(k)}$ be sequences of invertible positive operators  
such that $A^{(k)}\uparrow A$,\ \ $B^{(k)}\uparrow B$,\ \ \ $C^{(k)}\uparrow C$. 
Then 
$$ M_{\#,\#,\#}(A^{(k)},B^{(k)},C^{(k)})\uparrow 
M_{\#,\#,\#}(A,B,C).$$
\end{corollary}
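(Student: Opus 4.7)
The plan is to reduce the claim to the downward continuity already established in Proposition \ref{downward}, using the self-adjointness of $M_{\#,\#,\#}$ from Proposition \ref{Self-adjoint}. The idea, in one sentence, is: invert everything, apply downward continuity, invert back.

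First I would verify the standard fact that if $A^{(k)} \uparrow A$ with all operators positive invertible (and $A$ invertible), then $(A^{(k)})^{-1} \downarrow A^{-1}$ strongly. Since $A^{(k)} \le A$, the inverses satisfy $(A^{(k)})^{-1} \ge A^{-1} > 0$, so the decreasing sequence $(A^{(k)})^{-1}$ is bounded below and has a strong limit $X \ge A^{-1}$; joint strong continuity of multiplication on the norm-bounded set $\{A^{(k)}\} \cup \{(A^{(k)})^{-1}\}$ applied to the identity $A^{(k)}(A^{(k)})^{-1} = I$ forces $AX = I$, hence $X = A^{-1}$. The same argument applies to $B^{(k)}$ and $C^{(k)}$.

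Next, Proposition \ref{downward} applied to the decreasing sequences $(A^{(k)})^{-1} \downarrow A^{-1}$, $(B^{(k)})^{-1} \downarrow B^{-1}$, $(C^{(k)})^{-1} \downarrow C^{-1}$ yields
$$M_{\#,\#,\#}\bigl((A^{(k)})^{-1},(B^{(k)})^{-1},(C^{(k)})^{-1}\bigr)\;\downarrow\; M_{\#,\#,\#}(A^{-1},B^{-1},C^{-1}).$$
Since $A^{(k)}, B^{(k)}, C^{(k)}$ and $A, B, C$ are all positive invertible, Proposition \ref{Self-adjoint} identifies each term on the left with $M_{\#,\#,\#}(A^{(k)}, B^{(k)}, C^{(k)})^{-1}$ and the right-hand side with $M_{\#,\#,\#}(A,B,C)^{-1}$. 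Thus
$$M_{\#,\#,\#}(A^{(k)},B^{(k)},C^{(k)})^{-1} \;\downarrow\; M_{\#,\#,\#}(A,B,C)^{-1}.$$
Re-applying the invertibility/limit principle from the first step (inverting a strongly decreasing sequence of positive invertible operators whose limit is positive invertible yields a strongly increasing sequence converging to the inverse of the limit), we conclude
$$M_{\#,\#,\#}(A^{(k)},B^{(k)},C^{(k)}) \;\uparrow\; M_{\#,\#,\#}(A,B,C).$$

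The main obstacle is nothing deep but rather bookkeeping: one must ensure that every limit encountered is positive invertible so that Proposition \ref{Self-adjoint} can be applied at each stage. This is automatic here because $A,B,C$ are invertible, the $A^{(k)}, B^{(k)}, C^{(k)}$ are invertible by hypothesis, and $M_{\#,\#,\#}$ sends triples of positive invertible operators to positive invertible operators (as noted in Section \ref{multivariate operator mean}). No ingredient beyond Propositions \ref{downward} and \ref{Self-adjoint} is needed.
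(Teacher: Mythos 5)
Your proof is correct and takes essentially the same route as the paper: invert the increasing sequences to obtain decreasing ones, apply Proposition \ref{downward}, identify each term via Proposition \ref{Self-adjoint}, and invert back. The paper's own proof is simply a terser version of this same chain of equalities, so your additional care with the inversion-of-limits steps (checking invertibility of all limits and the strong continuity of inversion on these monotone sequences) only makes explicit what the paper leaves implicit.
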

\begin{proof}
Note that 
$(A^{(k)})^{-1}\downarrow A^{-1}$, 
$(B^{(k)})^{-1}\downarrow B^{-1}$, 
$(C^{(k)})^{-1}\downarrow C^{-1}$. 
Hence, by Proposition \ref{Self-adjoint} and 
Proposition \ref{downward}, we have 
\begin{align*}
&\lim_k M_{\#,\#,\#}((A^{(k)}),(B^{(k)}),(C^{(k)})) v \\
&=
\lim_k M_{\#,\#,\#}((A^{(k)})^{-1},(B^{(k)})^{-1},(C^{(k)})^{-1})^{-1}v \\
&=
M_{\#,\#,\#}(A^{-1},B^{-1},C^{-1})^{-1}v \\
&=
M_{\#,\#,\#}(A,B,C)v, 
\end{align*}
for all $v\in {\cal H}$.
\end{proof}



\section*{Acknowledgements}
We are grateful to Professor Fumio Hiai for his valuable suggestions during the preparation of this manuscript.
We also wish to thank Professors Takeaki Yamazaki and Shigeru Furuichi for their stimulating and insightful comments.

\section*{Funding} 
This work was supported by JSPS KAKENHI Grant Number JP23K
03141.

\vfill
\section*{Author Information}

\noindent
\textbf{Dante Hoshina} \\
Advanced Course of Control and Information Engineering\\
Kisarazu National College of Technology \\
Email: \href{mailto:sdj24b11@inc.kisarazu.ac.jp}{sdj24b11@inc.kisarazu.ac.jp}

\vspace{1em}

\noindent
\textbf{Shuhei Wada} \\
Department of Information and Computer Engineering,\\
Kisarazu National College of Technology \\
Email: \href{mailto:wada@j.kisarazu.ac.jp}{wada@j.kisarazu.ac.jp}

\begin{thebibliography}{10}
\bibitem{ALM}
T. Ando, C.-K. Li  and  R. Mathias,
{\it Geometric means}, 
Linear Algebra and its Applications 385(2004) 305--334.
\bibitem{BR}
R. B. Bapat and T. E. S. Raghavan, 
{\it Nonnegative Matrices and Applications}, 
Cambridge University Press, Cambridge, 1997.




\bibitem{BMP}
D. A. Bini, B. Meini, and F. Poloni,
{\it  An effective matrix geometric mean satisfying the Ando-Li-Mathias properties},
Math. Comp. 79 (2010), no. 269, 437--452.


\bibitem{HSW}
F. Hiai, Y. Seo and S. Wada, 
{\it Ando–Hiai type inequalities for multivariate operator means}, 
Linear and Multilinear Algebra 67(2019),  2253--2281.

\bibitem{HJ}
R. A. Horn and C. R. Johnson, 
{\it Matrix Analysis}, 
Cambridge University Press, Cambridge, 1993.


\bibitem{JLLY}
C. Jung, H. Lee, Y. Lim and T. Yamazaki, 
{\it Weighted geometric mean of n-operators with n-parameters},
Linear Algebra Appl. 432 (2010), no. 6, 1515--1530.


\bibitem{KA}
F. Kubo and T. Ando, 
{\it Means of positive linear operators}, 
Math. Ann. 246 (1980), 205--224.


\bibitem{M}
M. Moakher, 
{\it A differential geometric approach to the geometric mean of symmetric positive-definite matrices}, 
SIAM J. Matrix Anal. Appl. 26 (2005), no. 3, 735--747.

\bibitem{N}
N. Nakamura,
{\it Geometric means of positive operators}, 
Kyungpook Math. J. 49 (2009), no. 1, 167--181.


\bibitem{PT}
D. Petz and R. Temesi, 
{\it Means of positive numbers and matrices}, 
SIAM J. Matrix Anal. Appl. 27(2005), 712--720. 

\bibitem{P}
M. P\'alfia, 
{\it Weighted matrix means and symmetrization procedures}, 
Linear Algebra Appl. 438 (2013), 1746--1768.

\bibitem{PW}
W. Pusz and S.L. Woronowicz, 
{\it Form convex functions and the WYDL and other inequalities}, 
Lett. Math. Phys. 2 (1978), 505--512. 

\bibitem{T}
A. C. Thompson, 
{\it On certain contraction mappings in a partially ordered vector space}, 
Proc. Amer. Math. Soc. 14 (1963), 438--443.

\bibitem{U}
M. Uchiyama, 
{\it Symmetric matrix means}, 
Linear Algebra Appl. 656 (2023), 112--130.

\bibitem{U2}
M. Uchiyama, 
{\it Symmetric operator means}, 
Acta Sci. Math. (Szeged) 90 (2024), no. 3--4, 593--604.

\end{thebibliography}
\end{document}